\newcommand{\Ham}{\mathbb H}
\newcommand{\R}{\mathbb R}
\newcommand{\N}{\mathbb N}
\newcommand{\C}{\mathbb C}
\newcommand{\E}{\mathbb E}
\newcommand{\Pro}{\mathbb P}
\newcommand{\vol}{\mathrm{vol}}
\newcommand{\Mat}{\mathrm{Mat}}
\newcommand{\dint}{{\rm d}}
\newcommand{\Sph}{\ensuremath{{\mathbb S}}}
\newcommand{\B}{\ensuremath{{\mathbb B}}}
\newcommand{\SSS}{\ensuremath{{\mathbb S}}}
\newcommand{\eps}{\varepsilon}
\newcommand{\toweak}{\overset{w}{\underset{n\to\infty}\longrightarrow}}
\newcommand{\eqdistr}{\stackrel{d}{=}}
\newcommand{\ind}{\mathbbm{1}} 
\DeclareMathOperator{\Tr}{Tr}
\renewcommand{\Re}{\operatorname{Re}}  
\newtheorem{thm}{Theorem}[section]
\newtheorem{cor}[thm]{Corollary}
\newtheorem{lemma}[thm]{Lemma}
\newtheorem{df}[thm]{Definition}
\newtheorem{proposition}[thm]{Proposition}
\theoremstyle{definition}
\newtheorem{rmk}[thm]{Remark}
\def\cM{\mathcal{M}}
\begin{document}



\title[Sanov-type large deviations in Schatten classes]{Sanov-type large deviations in Schatten classes}

\author[Z. Kabluchko]{Zakhar Kabluchko}
\address{Zakhar Kabluchko: Institut f\"ur Mathematische Stochastik, Westf\"alische Wilhelms-Uni\-ver\-sit\"at M\"unster, Germany}
\email{zakhar.kabluchko@uni-muenster.de}

\author[J. Prochno]{Joscha Prochno}
\address{Joscha Prochno: Institut f\"ur Mathematik und Wissenschaftliches Rechnen, Karl-Franzens-Universit\"at Graz, Austria} \email{j.prochno@hull.ac.uk}

\author[C. Th\"ale]{Christoph Th\"ale}
\address{Christoph Th\"ale: Fakult\"at f\"ur Mathematik, Ruhr-Universit\"at Bochum, Germany} \email{christoph.thaele@rub.de}

\keywords{Asymptotic geometric analysis, Coulomb gas, eigenvalues, Gaussian ensembles, high dimensional convexity, large deviations principles, matrix unit balls, random matrix theory, Schatten classes, singular values}
\subjclass[2010]{Primary: 47B10, 60B20, 60F10 Secondary: 46B07, 47B10, 52A21, 52A23}



\begin{abstract}
Denote by $\lambda_1(A), \ldots, \lambda_n(A)$ the eigenvalues of an $(n\times n)$-matrix $A$. Let  $Z_n$ be an $(n\times n)$-matrix chosen uniformly at random from the matrix analogue to the classical $\ell_ p^n$-ball, defined as the set of all self-adjoint $(n\times n)$-matrices satisfying $\sum_{k=1}^n |\lambda_k(A)|^p\leq 1$.  We prove a large deviations principle for the (random) spectral measure of the matrix $n^{1/p} Z_n$. As a consequence, we obtain that the spectral measure of $n^{1/p} Z_n$ converges weakly almost surely to a non-random limiting measure given by the Ullman distribution, as $n\to\infty$. The corresponding results for random matrices in Schatten trace classes, where eigenvalues are replaced by the singular values, are also presented. 
\end{abstract}

\maketitle

\tableofcontents

\section{Introduction and main results}

\subsection{General introduction}

The systematic study of high-dimensional convex bodies is one of the central aspects in theory of Asymptotic Geometric Analysis. Probably the most prominent example are the unit balls of the classical $\ell_p^n$-spaces. Their geometry as well as their analytic and probabilistic aspects are well understood by now and we refer the reader to the research monographs and surveys \cite{AsymptoticGeometricAnalysisBookPart1, IsotropicConvexBodies,G2014b, G2014a} and the references therein.

In the local theory of Banach spaces and today in Asymptotic Geometric Analysis there has ever since been a particular interest and focus on the non-commutative settings. These are represented on the one hand by the Schatten trace classes $S_p$ ($0 < p \leq +\infty$), consisting of all compact linear operators on a Hilbert space for which the sequence of their singular values belongs to the sequence space $\ell_p^n$, and on the other hand by their self-adjoint subclasses, where the sequence of eigenvalues values belongs to $\ell_p^n$, which in dimension $n$ are formed by the so-called classical matrix ensembles  $\mathscr H_n(\R)$ (Gaussian orthogonal ensemble = GOE), $\mathscr H_n(\C)$ (Gaussian unitary ensemble = GUE) and $\mathscr H_n(\mathbb H)$ (Gaussian symplectic ensemble = GSE). For example, it were Gordon and Lewis \cite{GL1974} who obtained that the space $\mathcal S_1$ does not have local unconditional structure, Tomzcak-Jaegermann \cite{TJ1974} demonstrated that this space (naturally identified with the projective tensor product $\ell_2\otimes_{\pi}\ell_2$) has Rademacher cotype $2$, Szarek and Tomczak-Jaegermann \cite{ST1980} provided bounds for the volume ratio of $\mathcal S_1^n$, and K\"onig, Meyer and Pajor \cite{KMP1998} proved the boundedness of the isotropic constants of $\mathcal S_p^n$ ($1\leq p \leq +\infty$). More recently, Gu\'edon and Paouris \cite{GP2007} have established concentration of mass properties for the unit balls of Schatten $p$-classes and classical matrix ensembles, Barthe and Cordero-Erausquin \cite{BartheCordero-Erausquin} studied variance estimates, Ch\'avez-Dom\'inguez and Kutzarova determined the Gelfand widths of certain identity mappings between finite-dimensional trace classes $\mathcal S_p$, Radke and Vritsiou \cite{RV2016} and Vritsiou \cite{VritsiouVariance} proved the thin-shell conjecture and the variance conjecture for the operator norm, respectively, Hinrichs, Prochno and Vyb\'iral \cite{HPV17} computed the entropy numbers for natural embeddings of $\mathcal S_p^n$ in all possible regimes, and Kabluchko, Prochno and Th\"ale \cite{KPT2018b, KPT2018a} obtained the precise asymptotic volumes of the unit balls in classical matrix ensembles and Schatten classes, studied volumes of intersections (in the spirit of Schechtman and Schmuckenschl\"ager \cite{SchechtmanSchmuckenschlaeger}) and determined the exact asymptotic volume ratios for $\mathcal S_p^n$ ($0<p\leq +\infty$). It can be seen from all this work referenced above that while those matrix spaces often show a certain similarity to the commutative setting of classical $\ell_p^n$-spaces, there is a considerable difference in the behavior of certain quantities related to the geometry of Banach spaces. In fact, often other methods and tools are needed and proofs can be considerably more involved.
 
It was only recently that the probabilistic concept of a large deviations principle (LDP) was considered in Asymptotic Geometric Analysis by Gantert, Kim and Ramanan \cite{GKR}. Contrary to a central limit theorem the LDP allows one to access non-universal features and unveil properties that distinguish between different convex bodies. In the setting of finite-dimensional $\ell_p^n$-spaces, the authors proved an LDP for 1-dimensional projections of random vectors drawn uniformly from the unit ball $\B_p^n$ of $\ell_p^n$, demonstrating stark changes in large deviation behavior as the parameter $p$ varies. This result was extended by Alonso-Guti\'errez, Prochno and Th\"ale \cite{APT2018} to a higher-dimensional setting in the case where projections to random subspaces are considered, showing that the Euclidean norm of the projection of a random vector uniformly distributed in $\B_p^n$ onto a random subspace satisfies an LDP (see also \cite{APT2017} and \cite{KPT17CCM} for complementing results and other distributions). In his recent PhD thesis, Kim \cite{Kim2017} was able to extend further the results from \cite{APT2018} and \cite{GKR} to more general classes of random vectors satisfying an asymptotic thin-shell-type condition in the spirit of \cite{ABP2003} (see \cite[Assumption 5.1.2]{Kim2017}). Among others, this condition is satisfied by random vectors chosen uniformly at random from a (generalized) Orlicz ball. This body of research is complemented by \cite{KimRamanan}, in which Kim and Ramanan obtained a so-called Sanov-type large deviations principle for the empirical measure of an $n^{1/p}$ multiple of a point drawn from an $\ell_p^n$-sphere with respect to the cone or surface measure. The rate function is essentially shown to be the so-called relative entropy perturbed by some $p$-th moment penalty (see \cite[Equation (3.4)]{KimRamanan}). More precisely (also to allow comparison with the rate function we obtain for non-commutative $\ell_p^n$-spaces), they showed that if $X^{(n,p)}$ is a random vector distributed according to the cone measure $\mu_p$ on an $\ell_p^n$-sphere ($1\leq p \leq +\infty$), then the corresponding sequence of empirical measures satisfies an LDP (on the space of probability measures on $\R$ equipped with the $q$-Wasserstein topology for $q<p$) with good rate function given by
\[
\mathscr J(\nu) = 
\begin{cases}
H(\nu||\mu_p)+ \frac{1}{p}\big(1-m_p(\nu)\big) &\,: m_p(\nu) \leq 1 \\
+\infty &\,: m_p(\nu) >1\,, 
\end{cases}
\]
where $\nu$ is a probability measure on $\R$, $m_p(\nu) = \int_{\R}|x|^p \,\nu(\dint x)$, and 
\[
H(\nu||\mu) = 
\begin{cases}
\int_{\R} \log f(x)\,\nu(\dint x ) & \,: \nu \ll \mu,\, f=\frac{\dint\nu}{\dint\mu} \\
 +\infty &\,: \text{ otherwise}
\end{cases}
\]
is the relative entropy of $\nu$ with respect to the probability measure $\mu$. 

The purpose of the present paper is to leave the geometric setting of classical $\ell_p^n$-spaces and study principles of large deviations in the non-commutative framework of self-adjoint and non self-adjoint Schatten $p$-classes. A large deviations principle for the law of the spectral measure of a Gaussian Wigner matrix has already been obtained by Ben Arous and Guionnet \cite[Theorem 1.1 and Theorem 1.3]{ben_arous_guionnet}. A more general large deviations theorem for random measures (including the case of the empirical eigenvalue distribution of Wishart matrices) has been obtained by Hiai and Petz in \cite[Theorem 1]{hiai_petz_wishart} (see also \cite{hiai_petz}), which followed their preceding ideas from \cite{HP2000}, where the empirical eigenvalue distribution of suitably distributed random unitary matrices was shown to satisfies a large deviations principle as the matrix size goes to infinity. In the same spirit, we shall prove in this paper Sanov-type large deviations principles for the spectral measure of $n^{1/p}$ multiples of random matrices chosen uniformly (or with respect to the cone measure on the boundary) from the unit balls of self-adjoint and non self-adjoint Schatten $p$-classes where $0< p \leq +\infty$ (see Theorems \ref{theo:main} and \ref{theo:main_p_infty} for the self-adjoint case as well as Theorem \ref{thm:MainNonSelfAdjoint} for the non self-adjoint case). In the proofs, we roughly follow a classical strategy in large deviations theory (see, e.g., \cite{AGZ2010, ben_arous_guionnet, hiai_petz_wishart, HP2000}). However, in our case, we need to control the deviations of the empirical measures \textit{and}, in addition, their $p$-th moments towards arbitrary small balls in the product topology of the weak topology on the space of probability measures and the standard topology on $\R$ in the self-adjoint or $\R_+$ in the non self-adjoint set-up, respectively, and then prove exponential tightness. We shall also use a probabilistic representation for random points in the unit balls of classical matrix ensembles obtained recently in \cite{KPT2018a} (see \eqref{eq:schechtman_zinn_uniform} and \eqref{eq:schechtman_zinn_cone}) and a non self-adjoint counterpart (see Proposition \ref{prop:schechtman-zinn-non-self-adjoint}). As we shall see, the good rate function governing the LDPs is essentially given by the logarithmic energy (which is remarkably the same as the negative of Voiculescu's free entropy introduced in \cite{V1993}) and, which is quite interesting, a perturbation by a constant, which is strongly connected to the famous Ullman distribution. In fact, this constant already appeared in our recent works \cite{KPT2018b,KPT2018a}, where the precise asymptotic volume of unit balls in classical matrix ensembles and Schatten trace classes were computed using ideas from the theory of logarithmic potentials with external fields. As a consequence of our LDPs, we obtain a law of large numbers and show that the spectral measure converges weakly almost surely to the Ullman distribution, as the dimension tends to infinity (see Corollary \ref{cor:LLN}).

\subsection{Results for the self-adjoint case}

In order to present our main results for the self-adjoint case in detail, let us introduce some notation. Consider $\beta\in\{1,2,4\}$ and let $\mathscr H_n(\mathbb{F}_\beta)$ be the collection of all self-adjoint $(n\times n)$-matrices with entries from the skew field $\mathbb{F}_\beta$, where $\mathbb{F}_1=\R$, $\mathbb{F}_2=\C$ or $\mathbb{F}_4=\Ham$, the set of Hamiltonian quaternions. The standard Gaussian distribution on $\mathscr H_n(\mathbb{F}_\beta)$ is known as the GOE (Gaussian orthogonal ensemble) if $\beta=1$, the GUE (Gaussian unitary ensemble) if $\beta=2$, and the GSE (Gaussian symplectic ensemble) if $\beta=4$. By $\lambda_1(A),\ldots,\lambda_n(A)$ we denote the (real) eigenvalues of a matrix $A$ from $\mathscr H_n(\mathbb{F}_\beta)$ and consider the following Schatten-type unit ball, which can be regarded as a matrix analogue to the classical $\ell_p^n$-balls and is defined as
$$
\B_{p,\beta}^n := \left\{A\in \mathscr H_n(\mathbb{F}_\beta):\sum_{j=1}^n|\lambda_j(A)|^p \leq 1\right\},\qquad \beta \in\{1,2,4 \}\quad\text{and}\quad  0 < p \leq +\infty.
$$
If $p=+\infty$, then the sum above is replaced by $\max\{|\lambda_j(A)|:j=1,\ldots,n\} $ . The boundary of the matrix ball $\B_{p,\beta}^n$ is denoted by
$$
\Sph_{p,\beta}^{n-1} := \partial \B_{p,\beta}^n  = \left\{A\in \mathscr H_n(\mathbb{F}_\beta):\sum_{j=1}^n|\lambda_j(A)|^p = 1\right\}
$$
with the same convention if $p=+\infty$.
The space $\mathscr H_n(\mathbb{F}_\beta)$ admits a natural scalar product $\langle A, B\rangle = \Re \Tr (A B^*)$ so that it becomes a Euclidean space. The corresponding Riemannian volume on $\mathscr H_n(\mathbb{F}_\beta)$ is denoted by $\text{vol}_{\beta,n}$ and this measure coincides with the suitably normalized $({\beta n(n-1)\over 2} + \beta n)$-dimensional Hausdorff measure on $\mathscr H_n(\mathbb{F}_\beta)$ as follows directly from the area-coarea formula. We can therefore define the uniform distribution on $\B_{p,\beta}^n$. The cone probability measure on $\Sph_{p,\beta}^{n-1}$ is defined as follows: the cone measure of a Borel set $K\subseteq \Sph_{p,\beta}^{n-1}$ is
$$
\frac{\text{vol}_{\beta,n} (\cup_{\lambda\in [0,1]} \lambda K)}{\text{vol}_{\beta,n}(\B_{p,\beta}^n)}.
$$

The main result of this manuscript for the self-adjoint case is the following Sanov-type large deviations principle for random matrices distributed according to the uniform distribution on  $\B_{p,\beta}^n$ or the cone measure on $\Sph_{p,\beta}^{n-1}$. We denote by $\mathcal M(\R)$ the space of Borel probability measures on $\R$ equipped with the topology of weak convergence. On this topological space we consider the Borel $\sigma$-algebra, denoted by $\mathscr B(\mathcal M(\R))$.

\begin{thm}\label{theo:main}
Fix $0< p < +\infty$ and $\beta\in\{1,2,4\}$. For every $n\in\N$, let $Z_n$ be a random matrix chosen according to the uniform distribution on $\B_{p,\beta}^n$ or the cone measure on its boundary $\Sph_{p,\beta}^{n-1}$. Then the sequence of random probability measures
\[
\mu_n := \frac{1}{n}\sum_{i=1}^n \delta_{n^{1/p}\lambda_i(Z_n)},
\qquad n\in\N,
\]
satisfies an LDP on $\mathcal M(\R)$ with speed $n^2$ and good rate function $\mathscr I:\mathcal M(\R) \to [0,+\infty]$ defined by
\begin{equation}\label{eq:J_def_rate_funct}
\mathscr I(\mu) :=
\begin{cases}
- \frac \beta 2 \int_{\R}\int_{\R} \log|x-y| \, \mu(\dint x)\,\mu(\dint y) + \frac{\beta}{2p} \log\left(\frac{\sqrt{\pi}p \Gamma(\frac{p}{2})}{2^p\sqrt{e}\Gamma(\frac{p+1}{2})}\right) &\,: \int_{\R}|x|^p\mu(\dint x) \leq 1\\
+\infty &\,: \int_{\R}|x|^p\mu(\dint x) > 1\,.
\end{cases}
\end{equation}
\end{thm}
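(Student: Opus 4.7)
The plan is to follow the now classical matrix-model strategy (see e.g.\ \cite{ben_arous_guionnet,HP2000,hiai_petz_wishart,AGZ2010}), suitably adapted to the Schatten setting via the probabilistic representation of the uniform and cone measures on $\B_{p,\beta}^n$ recorded in \eqref{eq:schechtman_zinn_uniform} and \eqref{eq:schechtman_zinn_cone}. The first reduction is to pass from $Z_n$ to its eigenvalues using the Weyl integration formula on $\mathscr H_n(\mathbb F_\beta)$: integration of a spectral function against $\vol_{\beta,n}$ equals, up to a group-theoretic constant, integration against $\prod_{i<j}|\lambda_i-\lambda_j|^\beta\,\dint\lambda$ on $\R^n$. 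This yields an explicit joint density of the unordered eigenvalues of $Z_n$ proportional to $\prod_{i<j}|\lambda_i-\lambda_j|^\beta\cdot\mathbf 1\{\sum_{i=1}^n|\lambda_i|^p\le 1\}$ in the uniform case, and an analogous density supported on $\{\sum_{i=1}^n|\lambda_i|^p=1\}$ in the cone case. After the scaling $\lambda_i\mapsto n^{1/p}\lambda_i$ the constraint reads $\int|x|^p\mu_n(\dint x)\le 1$, and the total normalizing constant is identified from the precise volume asymptotics of $\B_{p,\beta}^n$ obtained in \cite{KPT2018a}: its $\frac{1}{n^2}$-log limit, combined with the scaling Jacobian, produces exactly the constant $\frac{\beta}{2p}\log\bigl(\frac{\sqrt{\pi}\,p\,\Gamma(p/2)}{2^p\sqrt e\,\Gamma((p+1)/2)}\bigr)$ appearing in \eqref{eq:J_def_rate_funct}.

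With the density in hand, at speed $n^2$ the log-density behaves heuristically like $\frac{\beta}{2}\iint\log|x-y|\,\mu_n(\dint x)\mu_n(\dint y)$ minus the above constant. To upgrade this heuristic into a rigorous LDP on $\mathcal M(\R)$ I would first establish a joint LDP for the enriched sequence $(\mu_n,m_p(\mu_n))$ on $\mathcal M(\R)\times[0,\infty)$ equipped with the product of the weak and standard topologies, where $m_p(\mu):=\int|x|^p\mu(\dint x)$; this is necessary because $m_p$ is only weakly lower semicontinuous, so tracking it explicitly is essential both for the feasibility of configurations and for handling the hard $p$-th moment constraint present in $\mathscr I$. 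The upper bound on a closed set $F\subseteq\mathcal M(\R)$ then follows by intersecting with the weakly compact set $\{\mu:m_p(\mu)\le 1\}$ (which contains $\mu_n$ almost surely), covering it by small product neighborhoods, and bounding the Vandermonde pointwise using the upper semicontinuity of $(\mu,t)\mapsto\iint\log|x-y|\,\mu(\dint x)\mu(\dint y)$ on $\{m_p(\mu)\le t\}$ — the positive part $\log^+|x-y|$ being dominated by $|x|^p+|y|^p$ up to a constant.

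For the lower bound on an open set $O\subseteq\mathcal M(\R)$, given $\mu\in O$ with $\mathscr I(\mu)<\infty$, I would regularize: approximate $\mu$ weakly by a measure $\mu_\eps$ with compactly supported smooth density and strict bound $m_p(\mu_\eps)<1$, quantize $\mu_\eps$ by its quantiles $x_1^{(n)}<\dots<x_n^{(n)}$, and bound from below the probability that the eigenvalues fall in narrow windows around these quantiles. On such configurations the Vandermonde is asymptotically $\exp\bigl(\tfrac{\beta}{2}n^2\iint\log|x-y|\,\mu_\eps(\dint x)\mu_\eps(\dint y)+o(n^2)\bigr)$, the window volume contributes only $o(n^2)$ to the log-probability, and feasibility $\frac1n\sum|x_i^{(n)}|^p\le 1$ holds for small $\eps$ and large $n$. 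Letting $\eps\to 0$ and using lower semicontinuity of the logarithmic energy yields the lower bound. Applying the contraction principle to the continuous projection $(\mu,t)\mapsto\mu$ then transfers the joint LDP to the desired LDP for $\mu_n$ with rate function $\mathscr I$. The main technical hurdles are the (standard but delicate) regularization of the singular logarithmic kernel in the lower bound and the joint analysis of weak convergence with the $p$-th moment; exponential tightness, by contrast, is automatic here, since the deterministic constraint $m_p(\mu_n)\le 1$ already confines $\mu_n$ to a weakly compact subset of $\mathcal M(\R)$.
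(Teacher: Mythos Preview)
Your route is genuinely different from the paper's. The paper does \emph{not} work directly with the constrained eigenvalue density of $Z_n$; instead it passes through the exponentially weighted matrix model $X_n$ with density proportional to $e^{-n\sum_i|x_i|^p}\prod_{i<j}|x_i-x_j|^\beta$, proves a joint LDP for $(\nu_n,m_p(\nu_n))$ whose rate function depends \emph{linearly} on the second coordinate, and then contracts via the \emph{normalization map} $F_p(\nu,c)(A)=\nu(c^{1/p}A)$ to reach the cone case; the uniform case is obtained afterwards by multiplying by an independent $U^{1/\ell}$ and using a further contraction. Your direct approach --- analysing $\prod_{i<j}|y_i-y_j|^\beta\,\mathbf 1\{m_p(\mu_n)\le 1\}$ --- buys a real simplification for the \emph{uniform} case: the hard constraint makes $\{\mu_n\}$ live in the weakly compact set $\{m_p\le 1\}$, so exponential tightness is free, and the constant in $\mathscr I$ drops out of the $n^{-2}$-log of the normalizing integral computed in \cite{KPT2018a}. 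Note, however, that your ``joint LDP for $(\mu_n,m_p(\mu_n))$ followed by projection'' is circular here: projection of $(\mu_n,m_p(\mu_n))$ onto the first coordinate is just $\mu_n$, and in the constrained model the joint rate does not depend on the second coordinate on $\{m_p(\mu)\le t\le 1\}$, so the joint LDP is at least as hard as the marginal and adds nothing. Your own sketch already proves the marginal directly.

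There is a genuine gap for the \emph{cone measure}. Your lower bound places the points at quantiles of a regularized $\mu_\eps$ with $m_p(\mu_\eps)<1$, and then invokes ``feasibility $\frac1n\sum|x_i^{(n)}|^p\le 1$''. But cone configurations satisfy $m_p(\mu_n)=1$ exactly, so these quantile tubes do not meet the sphere and your lower bound is vacuous there. The fix (which is precisely what drives the paper's Lemma~4.4 and Lemma~4.5, albeit in the matrix-model setting) is an \emph{outlier}: place $n-1$ points at the quantiles of $\mu_\eps$ and send the last one to a location of order $n^{1/p}(1-m_p(\mu_\eps))^{1/p}$ so that the constraint is hit; the extra Vandermonde factor $\prod_{i<n}|y_n-y_i|^\beta$ is only $e^{O(n\log n)}$ and is invisible at speed $n^2$. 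Without this step, the cone case is not covered by your argument. Either add the outlier to your direct cone analysis, or --- closer to the paper --- prove the uniform case first and deduce the cone case by contracting along the radial map $(\mu,t)\mapsto \mu(t^{1/p}\,\cdot\,)$ using a joint LDP in which the second coordinate is actually used.
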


\begin{rmk}
(i) As we shall see later, the distribution of the eigenvalues of a point chosen uniformly at random in $\B_{p,\beta}^n$ can be related to the $1$-dimensional Coulomb gas (whose density is given in Equation \eqref{eq:distribution of (X_1,...,X_n)} below) of $n$ particles at inverse temperature $\beta>0$ in an external potential $V:t\mapsto |t|^p$ acting on each particle. \\
\noindent (ii) It can be seen from Equation \eqref{eq:B_const_formula} below that the additive constant to the logarithmic energy is closely linked to the limit of the free energy whose precise value follows from results of potential theory.
\end{rmk}

In the next theorem, we consider the case $p=+\infty$.
\begin{thm}\label{theo:main_p_infty}
Fix $\beta\in\{1,2,4\}$. For every $n\in\N$, let $Z_n$ be a random matrix chosen according to the uniform distribution on $\B_{\infty,\beta}^n$ or the cone measure on its boundary $\Sph_{\infty,\beta}^{n-1}$. Then the sequence of random probability measures
\[
\mu_n := \frac{1}{n}\sum_{i=1}^n \delta_{\lambda_i(Z_n)},
\qquad n\in\N,
\]
satisfies an LDP on $\mathcal M(\R)$ with speed $n^2$ and good rate function $\mathscr I:\mathcal M(\R) \to [0,+\infty]$ defined by
\[
\mathscr I(\mu) =
\begin{cases}
- \frac \beta 2 \int_{\R}\int_{\R} \log|x-y| \, \mu(\dint x)\,\mu(\dint y)
 - \frac {\beta}{2} \log 2 &\,: \mu ([-1,1]) =1 \\
+\infty &\,: \mu ([-1,1]) < 1\,.
\end{cases}
\]
\end{thm}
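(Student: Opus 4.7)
The strategy is to reduce to a one-dimensional logarithmic Coulomb gas on the compact interval $[-1,1]$ and then apply the classical LDP machinery of Ben Arous--Guionnet~\cite{ben_arous_guionnet} and Hiai--Petz~\cite{HP2000, hiai_petz_wishart}.

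First, I would identify the joint law of the eigenvalues. By the Weyl integration formula, or equivalently by the probabilistic representations from~\cite{KPT2018a} recalled in \eqref{eq:schechtman_zinn_uniform} and \eqref{eq:schechtman_zinn_cone}, the vector $(\lambda_1(Z_n),\ldots,\lambda_n(Z_n))$ admits a density on $\R^n$ proportional to
\[
\prod_{1\le i<j\le n}|x_i-x_j|^\beta \cdot \ind_{[-1,1]^n}(x_1,\ldots,x_n)
\]
in the uniform case, with the analogous Vandermonde density restricted to $\{x\in[-1,1]^n:\max_i|x_i|=1\}$ for the cone measure. Since $\mu_n$ is almost surely supported in $[-1,1]$, the assertion $\mathscr I(\mu)=+\infty$ whenever $\mu([-1,1])<1$ is immediate: any open neighbourhood in $\mathcal M(\R)$ of such a $\mu$ is eventually avoided by $\mu_n$, so the large deviation probability vanishes super-exponentially.

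Next, rewriting the density as
\[
\frac{1}{Z_{n,\beta}}\exp\!\Bigl(\tfrac{\beta}{2}\sum_{i\ne j}\log|x_i-x_j|\Bigr)\ind_{[-1,1]^n}, \qquad Z_{n,\beta}:=\int_{[-1,1]^n}\prod_{i<j}|x_i-x_j|^\beta\,\dint x,
\]
the log-density of $\mu_n$ equals, up to $O(n\log n)$ diagonal terms, $n^2\cdot \tfrac{\beta}{2}\iint\log|x-y|\,\mu_n(\dint x)\mu_n(\dint y)-\log Z_{n,\beta}$. The LDP upper and lower bounds at speed $n^2$ then follow the standard template: for the upper bound one uses the truncation $\log_\epsilon(r):=\log\max(r,\epsilon)$ to control $\iint\log|x-y|$ by a weakly continuous functional and lets $\epsilon\downarrow 0$; for the lower bound one places particles in small disjoint tubes around a grid chosen according to the target measure and estimates the resulting Vandermonde by Riemann sums. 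Exponential tightness is automatic since all $\mu_n$ live on the compact set $[-1,1]$. The additive constant is extracted from the partition function asymptotics
\[
\lim_{n\to\infty}\frac{1}{n^2}\log Z_{n,\beta} = \frac{\beta}{2}\sup_{\mu\in\mathcal M([-1,1])}\iint\log|x-y|\,\mu(\dint x)\mu(\dint y) = -\frac{\beta}{2}\log 2,
\]
the supremum being attained at the arcsine distribution $\tfrac{1}{\pi\sqrt{1-x^2}}\dint x$, which is the Ullman measure for $p=\infty$; the value $\log 2$ is precisely the Robin constant of $[-1,1]$. This limit can be obtained either by direct evaluation of the $[-1,1]$-Selberg integral or by classical logarithmic potential theory, and subtracting this constant renormalises $\mathscr I$ so that $\inf\mathscr I=0$.

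The main obstacle I anticipate is the customary one for Coulomb-gas LDPs: controlling the diagonal singularity of $\log|x-y|$ uniformly in the regularisation parameter $\epsilon$ so that the truncated functional converges to the genuine logarithmic energy in both the upper and the lower bound. A secondary point is to verify that the uniform-on-ball and the cone-on-sphere cases produce the same LDP; the radial factor in the Schechtman--Zinn-type representation contributes only at order $n$ to the log-density, which is negligible at speed $n^2$, so both cases yield the same rate function.
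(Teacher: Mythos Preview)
Your treatment of the uniform case is correct and is essentially what the paper's omitted proof would amount to: the eigenvalue law for the uniform distribution on $\B_{\infty,\beta}^n$ is exactly the $\beta$-Vandermonde ensemble on the compact interval $[-1,1]$, so the standard Coulomb-gas LDP machinery (truncated kernel for the upper bound, particle placement for the lower bound, trivial exponential tightness by compactness) applies directly, and the Robin constant $\log 2$ produces the additive $-\tfrac{\beta}{2}\log 2$.

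There is, however, a genuine gap in your passage to the cone measure. The claim that ``the radial factor in the Schechtman--Zinn-type representation contributes only at order $n$ to the log-density'' is incorrect: in the representation $Z_n^{\text{unif}}\eqdistr U^{1/\ell}Z_n^{\text{cone}}$ one has $\ell\sim \tfrac{\beta}{2}n^2$, so $U^{1/\ell}$ satisfies an LDP at speed $n^2$ with nontrivial rate $z\mapsto -\tfrac{\beta}{2}\log z$ on $(0,1]$ (this is Lemma~\ref{lem:LDP-Beta}). In particular $\Pro[U^{1/\ell}\leq 1-\delta]=(1-\delta)^\ell$ decays only like $e^{-cn^2}$, not super-exponentially, so the uniform and cone empirical measures are \emph{not} exponentially equivalent and you cannot simply discard the radial factor. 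The reason the two rate functions nevertheless coincide is a cancellation: rescaling a measure by $z$ shifts the logarithmic energy by $\log z$, producing $+\tfrac{\beta}{2}\log z$ in the rate function, which exactly offsets the radial contribution $-\tfrac{\beta}{2}\log z$. This is precisely the computation carried out in the paper's Step~3 for finite $p$, and it goes through verbatim here.

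Note also that this contraction argument runs naturally in the direction cone $\Rightarrow$ uniform, whereas you have proved the uniform case first. To go the other way you cannot simply invert the contraction principle; you would need either to establish a pair LDP for $\bigl(\mu_n^{\text{unif}},\max_i|\lambda_i(Z_n)|\bigr)$ in the spirit of Theorem~\ref{thm:ldp empirical pair}, or to prove the cone LDP directly --- for instance by restricting to a face $\{x_n=1\}$ of the cube $[-1,1]^n$ and observing that the boundary factor $\prod_{i<n}|x_i-1|^\beta$ genuinely contributes only $O(n)$ to the log-density and hence does not affect the LDP at speed $n^2$.
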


As a corollary, we can derive a law of large numbers for $\mu_n$ and show that (weakly almost surely) the sequence of empirical measures converges to a non-random limiting distribution given by the Ullman measure (for $p<+\infty$) or arcsine measure (for $p=+\infty$). We denote the weak convergence of probability measures by ${\overset{w}{\longrightarrow}}$.
\begin{cor}\label{cor:LLN}
Fix $0< p <+\infty$ and $\beta\in\{1,2,4\}$.  Let $\mu_{\infty}^{(p)}$ be the probability measure on the interval $[-b_p,b_p]$ with the Ullman density $x\mapsto h_p(x/b_p)/b_p$, where
\begin{equation}\label{eq:ullman_def}
h_p(x):={p\over\pi}\left(\int_{|x|}^1{t^{p-1}\over\sqrt{t^2-x^2}}\,\dint t\right) \ind_{\{|y|\leq 1\}}(x),
\qquad
b_p
:=
\left(\frac {p \sqrt \pi  \Gamma \left(\frac p2\right)} {\Gamma\left(\frac{p+1}{2}\right)}\right)^{1/p}.
\end{equation}
(The normalization is chosen so that the $p$-th moment of $\mu_{\infty}^{(p)}$ equals $1$).  Then the random measures $\mu_n$ defined in Theorem~\ref{theo:main} satisfy
$$
\Pro \left[\mu_n \toweak \mu_\infty^{(p)}\right] = 1.
$$
The result also holds in the case $p=+\infty$ with $\mu_\infty^{(\infty)}$ being the arcsine distribution with Lebesgue density $\frac {1}{\pi} (1-t^2)^{-1/2}$, $t\in (-1,1)$.
\end{cor}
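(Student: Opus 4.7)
The plan is to derive the almost sure convergence from the LDPs in Theorems~\ref{theo:main} and~\ref{theo:main_p_infty} via a standard Borel--Cantelli argument, once the Ullman measure $\mu_\infty^{(p)}$ (respectively the arcsine measure when $p=+\infty$) has been identified as the \emph{unique} zero of the rate function $\mathscr I$.

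To produce a unique minimizer, I would first invoke strict convexity. The logarithmic energy
$$
E(\mu) := -\int_\R\int_\R \log|x-y|\,\mu(\dint x)\,\mu(\dint y)
$$
is strictly convex on the convex set of probability measures with finite energy, a classical fact of potential theory coming from positive definiteness of $-\log|x-y|$ on compactly supported signed measures of total mass zero. The constant term in~\eqref{eq:J_def_rate_funct} being independent of $\mu$, and the constraint set $\{m_p(\mu)\le 1\}$ being convex, $\mathscr I$ inherits strict convexity on its effective domain, so it has at most one minimizer. Existence follows from weak compactness of $\{m_p(\mu)\le 1\}$ (tight by Markov's inequality and weakly closed by lower semicontinuity of $\mu\mapsto m_p(\mu)$) combined with lower semicontinuity of $\mathscr I$; furthermore, the LDP itself forces $\inf \mathscr I = 0$, since $\Pro(\mu_n\in \mathcal M(\R))=1$.

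To identify this unique minimizer with $\mu_\infty^{(p)}$, I would verify the Euler--Lagrange variational condition
$$
\int_\R \log|x-y|\,\mu_\infty^{(p)}(\dint y) - c\,|x|^p \equiv -\ell \qquad\text{on } \mathrm{supp}(\mu_\infty^{(p)}) = [-b_p,b_p]
$$
for suitable constants $c,\ell$; this is the defining variational property of the equilibrium measure in the external field $c\,|x|^p$, whose unique solution is, after rescaling, the Ullman density $h_p(\cdot/b_p)/b_p$, with $b_p$ pinned by the active constraint $m_p(\mu_\infty^{(p)})=1$. For $p=+\infty$ the external field disappears and the problem reduces to finding the equilibrium measure of $[-1,1]$, which is the arcsine distribution with logarithmic energy $\log 2$. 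The additive constants in~\eqref{eq:J_def_rate_funct} and Theorem~\ref{theo:main_p_infty} are thereby forced to match these minimal energies so that $\mathscr I(\mu_\infty^{(p)})=0$; conveniently, the relevant constants were already computed in the volume analysis of~\cite{KPT2018a, KPT2018b}.

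To conclude, let $d$ metrize the weak topology on $\mathcal M(\R)$. For each $\eps>0$ the set $F_\eps := \{\mu : d(\mu,\mu_\infty^{(p)}) \ge \eps\}$ is closed, and uniqueness of the minimizer together with goodness of $\mathscr I$ (compact sublevel sets) yields $\kappa_\eps := \inf_{F_\eps}\mathscr I > 0$. The LDP upper bound then gives $\Pro\bigl[d(\mu_n,\mu_\infty^{(p)})\ge\eps\bigr] \le e^{-\kappa_\eps n^2/2}$ for all sufficiently large $n$, a bound summable in $n$, and Borel--Cantelli yields $\mu_n\toweak\mu_\infty^{(p)}$ almost surely. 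The principal obstacle is the identification step: it relies on logarithmic potential theory with the external field $|x|^p$, but this is classical and has the Ullman distribution as its well-known answer, so no substantial new work beyond citing the correct result is required.
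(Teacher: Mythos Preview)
Your proposal is correct and follows essentially the same route as the paper: identify $\mu_\infty^{(p)}$ as the unique zero of the good rate function, use the LDP upper bound on the closed set $\{d(\cdot,\mu_\infty^{(p)})\ge\eps\}$ to get an exponential bound, and conclude by Borel--Cantelli. The only difference is cosmetic: where you sketch the identification of the minimizer via strict convexity of the logarithmic energy and the Euler--Lagrange condition for the constrained problem, the paper simply cites \cite[Proposition~5.3.4]{hiai_petz} for the same fact.
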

For example, for $p=2$ and $p=1$ the limiting spectral density takes the following form, see~\cite[pp.~195--196]{hiai_petz},
$$
\frac{\mu_{\infty}^{(1)}(\dint x)}{\dint x} =  \frac 1 {\pi^2}\log \frac{\pi + \sqrt{\pi^2-x^2}}{|x|}\, \ind_{(-\pi,\pi)}(x),
\qquad
\frac{\mu_{\infty}^{(2)}(\dint x)}{\dint x} = \frac{1}{2\pi} \sqrt{4-x^2} \, \ind_{(-2,2)}(x),
$$
see Figure \ref{fig1}.

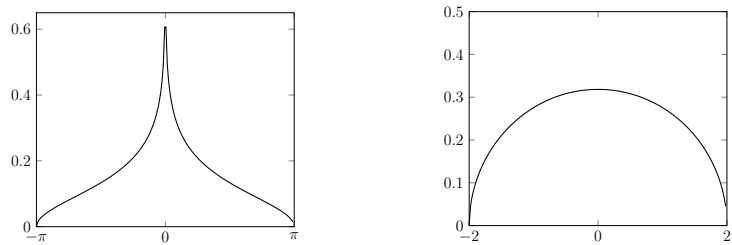
\begin{figure}[t]
\begin{center}
  \begin{tikzpicture}[scale=0.5]
      \begin{axis}[
       clip=false,
       xmin=-3.1415,xmax=3.1415,
       ymin=0, ymax=0.65,
       xtick={-3.1415,0,3.1415},
       xticklabels={$-\pi$, $0$,$\pi$}
       ]
        \addplot[domain=-3.1415:3.1415,samples=200,black,thick]{1/3.1415^2*ln((3.1415+sqrt(3.1415^2-x^2))/abs(x))};
      \end{axis}
    \end{tikzpicture}
    \qquad\qquad
  \begin{tikzpicture}[scale=0.5]
    \begin{axis}[
     clip=false,
     xmin=-2,xmax=2,
     ymin=0, ymax=0.5,
     xtick={-2,0,2},
     xticklabels={$-2$, $0$,$2$}
     ]
      \addplot[domain=-2:2,samples=200,black,thick]{1/(2*3.1415)*sqrt(4-x^2)};
    \end{axis}
  \end{tikzpicture}
\end{center}
\caption{Plots of the densities $\frac{\mu_{\infty}^{(1)}(\dint x)}{\dint x}$ (left panel) and $\frac{\mu_{\infty}^{(2)}(\dint x)}{\dint x}$ (right panel).}
\label{fig1}
\end{figure}

\subsection{Results for the non self-adjoint case}

After having discussed our main results for the self-adjoint case, we turn now to the non self-adjoint case, where the eigenvalues are replaced by the singular values. For an $(n\times n)$-matrix $A\in\Mat_n(\mathbb{F}_\beta)$ with entries from the skew field $\mathbb{F}_\beta$ with $\beta\in\{1,2,4\}$ we denote by $s_1(A),\ldots,s_n(A)$ the singular values of $A$. If $\beta=1$ or $\beta=2$ these are the eigenvalues of $\sqrt{AA^*}$, while in the Hamiltonian case $\beta=4$ we refer to \cite[Corollary E.13]{AGZ2010} for a formal definition. For $0<p\leq +\infty$ the Schatten $p$-ball is defined as
$$
\SSS\B_p^n := \Big\{A\in\Mat_n(\mathbb{F}_\beta):\sum_{j=1}^n|s_j(A)|^p\leq 1\Big\}
$$
with the convention that the sum is replaced by $\max\{|s_j|:j=1,\ldots,n\}$ in the case that $p=+\infty$. As in the self-adjoint case, $\Mat_n(\mathbb{F}_\beta)$ can be supplied with the structure of a Euclidean space in such a way that the 
$\beta n^2$-dimensional Hausdorff measure restricted to $\SSS\B_p^n$ is finite and can thus be normalized to a probability measure. Moreover, one can also define the cone probability measure on the boundary $\partial\SSS\B_p^n$ of $\SSS\B_p^n$.

The following Sanov-type LDP is the analogue of Theorem \ref{theo:main} and Theorem \ref{theo:main_p_infty} for the non self-adjoint case.

\begin{thm}\label{thm:MainNonSelfAdjoint}
Fix $\beta\in\{1,2,4\}$ and $0<p<+\infty$. For every $n\in\N$, let $Z_n$ be a random matrix chosen uniformly from $\SSS\B_p^n$ or according to the cone probability measure from $\partial\SSS\B_p^n$. Then the sequence of random probability measures
$$
\mu_n := {1\over n}\sum_{i=1}^n\delta_{n^{2/p}s_j^2(Z_n)},\qquad n\in\N,
$$
satisfies an LDP on the space $\mathcal{M}(\R_+)$ of Borel probability measures on $\mathbb{R}_+$, endowed with the weak topology, with speed $n^2$ and good rate function $\mathscr{J}:\mathcal{M}(\R_+)\to[0,+\infty]$ given by
$$
\mathscr{J}(\mu):=\begin{cases}
-{\beta\over 2}\int_{\R_+}\int_{\R_+}\log|x-y|\mu(\dint x)\mu(\dint y)+{\beta\over p}\log\Big({\sqrt{\pi}p\Gamma({p\over 2})\over 2^p\sqrt{e}\Gamma({p+1\over 2})}\Big) &: \int_{\R_+}|x|^{p/2}\mu(\dint x)\leq 1\\ 
+\infty &:\int_{\R_+}|x|^{p/2}\mu(\dint x)> 1.
\end{cases}
$$
The result continues to holds in the case $p=+\infty$ if the constant term in the rate function is replaced by its limiting value, as $p\to\infty$, which is given by $-{\beta\over 2}\log 2$.
\end{thm}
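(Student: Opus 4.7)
The plan is to mimic the strategy used for Theorems \ref{theo:main} and \ref{theo:main_p_infty}, replacing the self-adjoint Schechtman--Zinn representation by its non self-adjoint counterpart (Proposition \ref{prop:schechtman-zinn-non-self-adjoint}). First, I would use this representation to write $Z_n$ in distribution as a Ginibre-type random matrix $G$ with i.i.d.\ entries in $\mathbb{F}_\beta$, rescaled by $\bigl(\sum_{j=1}^n|s_j(G)|^p\bigr)^{-1/p}$ in the cone-measure case, and multiplied additionally by an independent $U^{1/(\beta n^2)}$ with $U$ uniform on $(0,1)$ in the uniform case. Since the contribution of this scalar is of order $\log n / n^2$ and therefore negligible at speed $n^2$, and since the joint law of the singular values of $G$ is bi-unitarily invariant, one may integrate out the unitary part entirely and work directly on the spectrum of $\sqrt{ZZ^*}$.

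Next, I would pass from the joint density of $(s_1(Z_n),\dots,s_n(Z_n))$ to that of the rescaled squared singular values $x_j:=n^{2/p}s_j^2(Z_n)$, $j=1,\dots,n$. Standard Ginibre-ensemble computations (see, e.g., \cite[App.\ E]{AGZ2010}) yield, after rescaling, a density on $\R_+^n$ of the form
\[
c_{n,p,\beta}\prod_{1\le i<j\le n}|x_i-x_j|^{\beta}\prod_{j=1}^n x_j^{\beta/2-1}\cdot\ind\!\Bigl\{\tfrac{1}{n}\sum_{j=1}^n |x_j|^{p/2}\le 1\Bigr\},
\]
with an analogous surface-measure version in the cone case, where the indicator is replaced by a constraint on the sphere $\{\sum|x_j|^{p/2}=n\}$. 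The explicit normalizing constant $c_{n,p,\beta}$, whose $n^{-2}\log$-asymptotics are accessible from the volume formulas derived in \cite{KPT2018a}, will produce the additive constant of $\mathscr J$. Critically, $\log\prod_j x_j^{\beta/2-1}$ is of order $n$ and is therefore \emph{subdominant} at speed $n^2$, so only the Vandermonde factor and the moment constraint enter the rate function.

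Now I would invoke the Coulomb-gas LDP machinery of Ben Arous--Guionnet \cite{ben_arous_guionnet}, Hiai--Petz \cite{hiai_petz_wishart} and \cite{AGZ2010}, in precisely the form used in the proof of Theorem \ref{theo:main}: establish a \emph{joint} LDP on $\mathcal M(\R_+)\times\R_+$ for the pair $(\mu_n,m_{p/2}(\mu_n))$ with $m_{p/2}(\mu):=\int_{\R_+}|x|^{p/2}\mu(\dint x)$, then contract to the marginal $\mu_n$ and combine with the normalizing-constant asymptotics. The Vandermonde factor yields the logarithmic-energy term $-\tfrac{\beta}{2}\iint\log|x-y|\,\mu(\dint x)\mu(\dint y)$; the upper bound follows from truncating $\log|x-y|$ at level $-M$ and a compactness argument on bounded continuous test functions; the lower bound uses a suitable $\eps$-regularization of $\mu$ on $\R_+$ to match an empirical measure of a well-separated deterministic configuration, exactly as in \cite{ben_arous_guionnet} and \cite{hiai_petz_wishart}. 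Exponential tightness on $\mathcal M(\R_+)$ follows from the moment bound $m_{p/2}(\mu_n)\le 1$, which uniformly controls the mass near $+\infty$; no issue arises near the origin because the logarithmic energy is bounded below uniformly on $\{m_{p/2}(\mu)\le 1\}$. The case $p=+\infty$ is handled as in Theorem \ref{theo:main_p_infty}, with the moment constraint replaced by $\mu([0,1])=1$; the constant $-\tfrac{\beta}{2}\log 2$ arises either by a direct volume computation or by taking $p\to\infty$ inside the $p<\infty$ rate function and applying Stirling to
\[
\lim_{p\to\infty}\frac{\beta}{p}\log\!\left(\frac{\sqrt{\pi}\,p\,\Gamma(p/2)}{2^p\sqrt{e}\,\Gamma((p+1)/2)}\right)=-\frac{\beta}{2}\log 2.
\]

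I expect the main obstacle to lie not in the large-deviations machinery itself, which is by now classical in the Coulomb-gas setting, but in two bookkeeping issues. The first is matching the normalizing constant $c_{n,p,\beta}$ with the additive constant $\tfrac{\beta}{p}\log\bigl(\tfrac{\sqrt\pi p\Gamma(p/2)}{2^p\sqrt e\,\Gamma((p+1)/2)}\bigr)$ in $\mathscr J$ — the factor $2$ compared to the self-adjoint case ultimately comes from the fact that the non self-adjoint Ginibre has $\beta n^2$ real degrees of freedom versus roughly $\tfrac{\beta n^2}{2}$ in the self-adjoint case, which doubles the contribution of the free-energy constant identified via logarithmic-potential theory. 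The second is the careful treatment of the Jacobian factor $\prod_j x_j^{\beta/2-1}$ when $\beta=1$, where it is integrable but unbounded at the origin; this is standard and handled by the usual truncation-plus-monotone-convergence argument, and since it only contributes at speed $n$, it does not affect the rate function at speed $n^2$.
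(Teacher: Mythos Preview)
There are two genuine problems. First, the claim that the scalar $U^{1/(\beta n^2)}$ is ``of order $\log n/n^2$ and therefore negligible at speed $n^2$'' is false. By Lemma~\ref{lem:LDP-Beta}, $U^{1/\ell}$ with $\ell\sim\beta n^2/2$ (the correct exponent is $1/(n+m)\sim 2/(\beta n^2)$, cf.\ Proposition~\ref{prop:schechtman-zinn-non-self-adjoint}) satisfies a \emph{non-trivial} LDP at speed $n^2$ with rate function $z\mapsto-\tfrac{\beta}{2}\log z$ on $(0,1]$. That uniform and cone measures nonetheless yield the same rate function is not due to negligibility but to an exact \emph{cancellation} in the contraction step: rescaling the logarithmic energy by $z$ produces $+\tfrac{\beta}{2}\log z$, which kills the $-\tfrac{\beta}{2}\log z$ coming from $U^{1/\ell}$, exactly as in the paper's Step~3.

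Second, your strategy conflates two incompatible routes. You write down the \emph{constrained} density of $x_j=n^{2/p}s_j^2(Z_n)$ (with the indicator $\{n^{-1}\sum|x_j|^{p/2}\le 1\}$) and then propose a joint LDP for $(\mu_n,m_{p/2}(\mu_n))$ followed by contraction. But the joint-LDP-plus-contraction machinery of Theorem~\ref{thm:ldp empirical pair} and Proposition~\ref{prop:LDP_cone} is built for the \emph{unconstrained} exponential Coulomb gas $X_n$ of Proposition~\ref{prop:schechtman-zinn-non-self-adjoint}: one proves the joint LDP for $(\nu_n,m_{p/2}(\nu_n))$ with $\nu_n=\tfrac{1}{n}\sum\delta_{X_{i,n}}$, and the non-trivial contraction is via the \emph{normalization map} $F_{p/2}(\nu,m)(\cdot)=\nu(m^{2/p}\cdot)$, which is what actually carries $\nu_n$ to $\mu_n$. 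Under your constrained density there is no normalization left to perform, so the ``contraction to the marginal $\mu_n$'' is vacuous. The paper follows the unconstrained route throughout; it also determines the additive constant not from volume asymptotics but by a symmetrization trick: writing $V=U^2$ for a symmetric $U$ gives $\tfrac{\beta}{2}\,\E\log|V-\widetilde V|=\beta\,\E\log|U-\widetilde U|$, which reduces the $\R_+$-minimization with $p/2$-moment constraint to the already-solved self-adjoint minimization on $\R$ with $p$-moment constraint and explains the factor $\beta/p$ in place of $\beta/(2p)$.
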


Again as a corollary, we derive a law of large numbers for the empirical singular-value distribution (and not for the squares of the singular values as in Theorem \ref{thm:MainNonSelfAdjoint}). This is the analogue of Corollary \ref{cor:LLN} for the Schatten $p$-balls.

\begin{cor}\label{cor:SLLNSchatten}
Fix $\beta\in\{1,2,4\}$ and $0<p<+\infty$. Let $\eta_\infty^{(p)}$ be the probability measure on $[0,b_p]$ with density $x\mapsto 2b_p^{-1}h_p(x/b_p)$ with $h_p(x)$ and $b_p$ given by \eqref{eq:ullman_def}. Further, for each $n\in\N$, let $Z_n$ be uniformly distributed in $\SSS\B_p^n$ or distributed according to the cone probability measure on $\partial\SSS\B_p^n$. Then 
$$
\Pro\Big[{1\over n}\sum_{j=1}^n\delta_{n^{1/p}s_j(Z_n)}\toweak\eta_\infty^{(p)}\Big] = 1.
$$
The result also holds in the case $p=+\infty$ with $\eta_\infty^{(\infty)}$ being the absolute arcsine distribution with density $x\mapsto{2\over\pi}(1-t^2)^{-1/2}$, $t\in(0,1)$.
\end{cor}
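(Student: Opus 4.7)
The plan is to derive almost sure weak convergence from Theorem~\ref{thm:MainNonSelfAdjoint} via the Borel--Cantelli lemma, and then transport the conclusion from the squared singular values to the singular values themselves through the continuous map $y\mapsto\sqrt{y}$ on $\R_+$.

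The first step is to identify the unique minimizer $\mu_*\in\mathcal M(\R_+)$ of the good rate function $\mathscr J$. The functional $\mu\mapsto -\frac{\beta}{2}\int\int\log|x-y|\,\mu(\dint x)\mu(\dint y)$ on $\mathcal M(\R_+)$, under the constraint $\int x^{p/2}\mu(\dint x)\leq 1$, is a weighted logarithmic energy problem on the half-line whose equilibrium measure is known from classical potential theory and has been analysed in the authors' companion works \cite{KPT2018a,KPT2018b}. A direct change-of-variable calculation shows that this equilibrium measure is precisely the pushforward of $\eta_\infty^{(p)}$ under $x\mapsto x^2$, namely the probability measure on $[0,b_p^2]$ with density $b_p^{-1}h_p(\sqrt{y}/b_p)/\sqrt{y}$. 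Moreover, the additive constant appearing in $\mathscr J$ is exactly the one making this minimum equal to zero, since it coincides with the limit of the free energy computed in \cite{KPT2018a,KPT2018b}. Strict convexity of the logarithmic energy together with the constraint forces uniqueness of the minimizer.

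With $\mu_*$ in hand, the conclusion follows by a classical route. Since $\mathcal M(\R_+)$ endowed with the weak topology is Polish (metrisable for instance by the L\'evy--Prokhorov metric), one can choose a countable basis $(U_k)_{k\in\N}$ of open neighborhoods of $\mu_*$ shrinking to $\{\mu_*\}$. Because $\mathscr J$ is a good rate function with unique minimum zero at $\mu_*$, one has $c_k:=\inf_{\mu\in U_k^c}\mathscr J(\mu)>0$ for each $k$, and the LDP upper bound from Theorem~\ref{thm:MainNonSelfAdjoint} gives $\Pro[\mu_n\in U_k^c]\leq e^{-c_k n^2/2}$ for all sufficiently large $n$, so $\sum_n\Pro[\mu_n\in U_k^c]<\infty$. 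Borel--Cantelli therefore yields $\mu_n\in U_k$ eventually almost surely; intersecting over $k$, the sequence $\mu_n$ converges weakly to $\mu_*$ almost surely. Finally, the map $T:\R_+\to\R_+$, $T(y)=\sqrt{y}$, is continuous and satisfies $T_*\mu_n=\frac{1}{n}\sum_{j=1}^n\delta_{n^{1/p}s_j(Z_n)}$, so the continuous mapping theorem for weak convergence transfers the almost sure convergence to the target $T_*\mu_* = \eta_\infty^{(p)}$. The case $p=+\infty$ is identical, with support $[0,1]$ and limit the absolute arcsine distribution. The main obstacle is Step~1---the weighted equilibrium problem on $\R_+$ with external field $x\mapsto x^{p/2}$ and the matching of the additive constant with the limiting free energy require non-trivial potential-theoretic input---but these ingredients are already fully developed in \cite{KPT2018a,KPT2018b}.
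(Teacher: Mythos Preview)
Your proposal is correct and follows essentially the same route as the paper. The paper states that the proof of Corollary~\ref{cor:SLLNSchatten} is ``literally the same'' as that of Corollary~\ref{cor:LLN}: identify the unique zero of the good rate function (done in Section~\ref{sec:ProofNonSelfAdjoint} via the symmetrization $U=\varepsilon\sqrt{V}$ reducing the half-line equilibrium problem to the Ullman problem on $\R$), then run the metric/Borel--Cantelli argument of Section~\ref{sec:proof_LLN}. Your only addition is making explicit the continuous-mapping step $y\mapsto\sqrt{y}$ needed to pass from the LDP for $n^{2/p}s_j^2(Z_n)$ in Theorem~\ref{thm:MainNonSelfAdjoint} to the statement about $n^{1/p}s_j(Z_n)$, which the paper leaves to the reader.
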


For example, if $p=1$ the limiting distribution has density 
$$
{\dint\eta_\infty^{(1)}\over \dint x}(x) = {2\over\pi^2}\log{\pi+\sqrt{\pi^2-x^2}\over x}\, \ind_{(0,\pi)}(x),
$$
and if $p=2$ we get the `quater-circle distribution' with density
$$
{\dint\eta_\infty^{(2)}\over \dint x}(x) = {1\over\pi}\sqrt{4-x^2} \,\ind_{(0,2)}(x),
$$
see Figure \ref{fig2}.

\begin{figure}[t]
\begin{center}
\begin{tikzpicture}[scale=0.5]
      \begin{axis}[
       clip=false,
       xmin=0,xmax=3.1415,
       ymin=0, ymax=1.3,
       xtick={0,3.1415},
       xticklabels={$0$,$\pi$}
       ]
        \addplot[domain=0:3.1415,samples=200,black,thick]{2/3.1415^2*ln((3.1415+sqrt(3.1415^2-x^2))/abs(x))};
      \end{axis}
    \end{tikzpicture}
    \qquad\qquad
  \begin{tikzpicture}[scale=0.5]
    \begin{axis}[
     clip=false,
     xmin=0,xmax=2,
     ymin=0, ymax=0.65,
     xtick={0,2},
     xticklabels={$0$,$2$}
     ]
      \addplot[domain=0:2,samples=200,black,thick]{1/(3.1415)*sqrt(4-x^2)};
    \end{axis}
  \end{tikzpicture}
\end{center}
\caption{Plots of the densities $\frac{\eta_{\infty}^{(1)}(\dint x)}{\dint x}$ (left panel) and $\frac{\eta_{\infty}^{(2)}(\dint x)}{\dint x}$ (right panel).}
\label{fig2}
\end{figure}
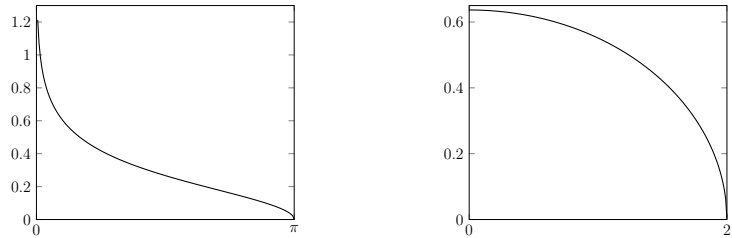

\subsection{Organization of the paper}
The rest of the paper is organized as follows. In Section \ref{sec:LD}, we introduce the concept of an LDP and some fundamental results used later. In Section \ref{sec:schechtmann_zinn}, we present a Schechtman-Zinn type probabilistic representation recently obtained in \cite{KPT2018a} and which is essential in our argumentation as well. The strategy of the proof is then outlined in Section \ref{Sec:strategy}. The remaining part of the manuscript is devoted to the proof of Theorem~\ref{theo:main} with exception of Section~\ref{sec:proof_LLN} in which we prove Corollary~\ref{cor:LLN}. The proof of Theorem~\ref{theo:main_p_infty} is omitted because it follows the same lines as the proof of Theorem~\ref{theo:main}. Since the proofs of Theorem \ref{thm:MainNonSelfAdjoint} and Corollary \ref{cor:SLLNSchatten} are very similar to the self-adjoint cases, we only sketch the differences in Section \ref{sec:ProofNonSelfAdjoint}.

\section{Large deviation principles} \label{sec:LD}
Keeping a brought readership from both probability theory and geometric analysis in mind, we provide in this section the necessary background material from the theory of large deviations, which may be found in \cite{DZ,dH,Kallenberg}, for example. We start directly with the definition of what is understood by a full and a weak large deviations principle. In this paper we fix an underlying probability space $(\Omega,\mathcal{F},\Pro)$, which we implicitly assume to be rich enough to carry all the random objects we consider. Also, for a subset $A$ of a topological (or metric) space we write $A^\circ$ and $\overline{A}$ for the interior and the closure of $A$, respectively.

\begin{df}
Let $(\xi_n)_{n\in\N}$ be a sequence of random elements taking values in some metric space $M$. Further, let $(s_n)_{n\in\N}$ be a positive sequence and $\mathscr{I}:M\to[0,+\infty]$ be a lower semi-continuous function.
We say that $(\xi_n)_{n\in\N}$ satisfies a (full) large deviations principle (LDP) with speed $s_n$ and a rate function $\mathscr{I}$ if
\begin{equation}\label{eq:LDPdefinition}
\begin{split}
-\inf_{x\in A^\circ}\mathscr{I}(x)
\leq\liminf_{n\to\infty}{1\over s_n}\log\Pro\left[\xi_n \in A \right]
\leq\limsup_{n\to\infty}{1\over s_n}\log\Pro\left[\xi_n \in A \right]\leq -\inf_{x\in\overline{A}}\mathscr{I}(x)
\end{split}
\end{equation}
for all Borel sets $A\subseteq M$. The rate function $\mathscr I$ is called good if its level sets  $\{x\in M\,:\, \mathscr{I}(x) \leq \alpha \}$ are compact for all $\alpha\geq 0$.
We say that $(\xi_n)_{n\in\N}$ satisfies a weak LDP with speed $s_n$ and rate function $\mathscr{I}$ if the upper bound in \eqref{eq:LDPdefinition} is valid only for compact sets $A\subseteq M$.
\end{df}

We notice that on the class of all $\mathscr{I}$-continuity sets, that is, on the class of Borel sets $A\subseteq M$ for which $\mathscr{I}(A^\circ)=\mathscr{I}(\bar{A})$ with $\mathscr{I}(A):=\inf\{\mathscr{I}(x):x\in A\}$, one has the exact limit relation
$$
\lim_{n\to\infty}{1\over s_n}\log\Pro\left[\xi_n \in A\right]=-\mathscr{I}(A)\,.
$$

What separates a weak from a full LDP is the so-called exponential tightness of the sequence of random variables (see, e.g., \cite[Lemma 1.2.18]{DZ} and \cite[Lemma 27.9]{Kallenberg}).

\begin{proposition}\label{prop:equivalence weak and full LDP}
Let $(\xi_n)_{n\in\N}$ be a sequence of random elements taking values in $M$. Suppose that it satisfies a weak LDP with speed $s_n$ and rate function $\mathscr{I}$. Then $(\xi_n)_{n\in\N}$ satisfies a full LDP if and only if the sequence is exponentially tight, that is, if and only if
$$
\inf_K\limsup_{n\to\infty}{1\over s_n}\log \Pro\left[\xi_n\notin K\right]=-\infty\,,
$$
where the infimum is running over all compact sets $K\subseteq M$. In this case, the rate function $\mathscr I$ is good.
\end{proposition}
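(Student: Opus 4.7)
The plan is to prove the substantive ``if'' direction -- that exponential tightness upgrades a weak LDP to a full LDP with a good rate function -- since the converse then falls out almost immediately. The crux is that the upper bound on compact sets, which is already supplied by the weak LDP, can be extended to arbitrary closed sets once we know the mass leaking outside compact regions decays sufficiently fast on the exponential scale.

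For a closed Borel set $A \subseteq M$ and an auxiliary level $L > 0$, I would use exponential tightness to produce a compact $K_L \subseteq M$ with
$$\limsup_{n \to \infty} \frac{1}{s_n} \log \Pro[\xi_n \in K_L^c] \leq -L.$$
Splitting $\Pro[\xi_n \in A] \leq \Pro[\xi_n \in A \cap K_L] + \Pro[\xi_n \in K_L^c]$ and applying the elementary estimate
$$\limsup_n \frac{1}{s_n} \log(a_n + b_n) \leq \max\Bigl(\limsup_n \frac{1}{s_n} \log a_n,\, \limsup_n \frac{1}{s_n} \log b_n\Bigr),$$
together with the weak-LDP upper bound on the compact set $A \cap K_L$, yields
$$\limsup_{n \to \infty} \frac{1}{s_n} \log \Pro[\xi_n \in A] \leq \max\bigl(-\inf_{x \in A \cap K_L} \mathscr{I}(x),\, -L\bigr) \leq \max\bigl(-\inf_{x \in A} \mathscr{I}(x),\, -L\bigr),$$
the last step using monotonicity of $\inf \mathscr{I}$ together with $A \cap K_L \subseteq A$. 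Sending $L \to \infty$ produces the desired full-LDP upper bound; the lower bound on open sets is simply inherited from the weak LDP.

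For goodness of $\mathscr{I}$, I would fix $\alpha \geq 0$ and pick $L > \alpha$ with the associated compact $K_L$. Applying the weak-LDP lower bound to the open set $K_L^c$ and combining with the exponential-tightness estimate gives $\inf_{y \in K_L^c} \mathscr{I}(y) \geq L > \alpha$, hence $\{\mathscr{I} \leq \alpha\} \subseteq K_L$. Being closed by lower semi-continuity of $\mathscr{I}$ and contained in a compact set, this level set is itself compact. The converse direction is then essentially the reverse of this argument: once goodness is in hand, the compact level set $\{\mathscr{I} \leq L\}$ has complement contained in the closed set $\{\mathscr{I} \geq L\}$, and the full-LDP upper bound on the latter yields $\limsup_n s_n^{-1} \log \Pro[\xi_n \in \{\mathscr{I} \leq L\}^c] \leq -L$, which is exactly exponential tightness.

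The main subtlety I anticipate is the careful handling of the $\limsup$ inside the $\max$ as $L \to \infty$, and ensuring throughout that the sets to which one invokes the weak-LDP bounds have the correct topological character -- compact for the upper bound, open for the lower. Beyond this bookkeeping, the entire argument is a direct deployment of the standing hypotheses.
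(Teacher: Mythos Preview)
The paper does not actually prove this proposition; it merely cites Lemma~1.2.18 of Dembo--Zeitouni and Lemma~27.9 of Kallenberg as standard references. Your argument for the ``if'' direction and for goodness of $\mathscr{I}$ is correct and is precisely the standard proof found there: split a closed set into a compact part (handled by the weak-LDP upper bound) and a tail (controlled by exponential tightness), take the maximum, and send $L\to\infty$; for goodness, trap each sublevel set inside a compact $K_L$ via the lower bound applied to the open complement $K_L^c$.

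One small gap concerns the converse. Your argument there invokes goodness of $\mathscr{I}$ (``once goodness is in hand''), but you established goodness only under the exponential-tightness hypothesis --- that is, in the ``if'' direction. In the ``only if'' direction the hypothesis is merely that a full LDP holds, and goodness is not assumed, so as written the reasoning is circular. In fact the implication ``full LDP $\Rightarrow$ exponential tightness'' is known to be delicate in general metric spaces and typically requires either goodness as an additional hypothesis or further topological assumptions on $M$; this is why the cited Dembo--Zeitouni lemma states only the forward direction. Since the paper never invokes the ``only if'' part, nothing downstream is affected.
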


The following proposition (see, for instance, \cite[Theorem 4.1.11]{DZ}) shows that it is sufficient to prove a weak LDP for a sequence of random elements for sets in a basis of the topology underlying $M$.

\begin{proposition}\label{prop:basis topology}
Let $\mathcal T$ be basis of the topology in a metric space $M$. Let $(\xi_n)_{n\in\N}$ be a sequence of $M$-valued random elements.  If for every $x\in M$,
$$
\mathscr I(x)
=
- \inf_{A\in\mathcal T:\, x\in A} \limsup_{n\to\infty} \frac 1 {s_n} \log \Pro \left[\xi_n \in A\right]
=
- \inf_{A\in\mathcal T:\, x\in A} \liminf_{n\to\infty} \frac 1 {s_n} \log \Pro \left[\xi_n \in A\right],
$$
then $(\xi_n)_{n\in\N}$ satisfies a weak LDP with speed $s_n$ and rate function $\mathscr{I}$.
\end{proposition}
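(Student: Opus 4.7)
The plan is to verify the two defining inequalities of a weak LDP separately, together with lower semicontinuity of $\mathscr I$, starting from the hypothesis that both the $\liminf$ and $\limsup$ characterizations of $\mathscr I(x)$ agree with the supremum (over basis neighborhoods of $x$) of $-\limsup\frac1{s_n}\log\Pro[\xi_n\in A]$ and of $-\liminf\frac1{s_n}\log\Pro[\xi_n\in A]$, respectively.

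\textbf{Lower bound on open sets.} Fix an arbitrary open $A\subseteq M$ and $x\in A$. Since $\mathcal T$ is a basis of the topology, there exists $B\in\mathcal T$ with $x\in B\subseteq A$. Monotonicity of probability gives
\[
\liminf_{n\to\infty}\frac1{s_n}\log\Pro[\xi_n\in A]\ \ge\ \liminf_{n\to\infty}\frac1{s_n}\log\Pro[\xi_n\in B]\ \ge\ -\mathscr I(x),
\]
where the last inequality uses the hypothesis $\mathscr I(x)=-\inf_{A'\in\mathcal T,\,x\in A'}\liminf_{n}\frac1{s_n}\log\Pro[\xi_n\in A']\ge -\liminf_{n}\frac1{s_n}\log\Pro[\xi_n\in B]$. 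Taking the supremum over $x\in A$ yields the required lower bound $\liminf_n\frac1{s_n}\log\Pro[\xi_n\in A]\ge -\inf_{x\in A}\mathscr I(x)$.

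\textbf{Upper bound on compact sets.} Fix a compact $K\subseteq M$ and $\delta>0$. For each $x\in K$, the hypothesis on $\limsup$ allows us to pick $A_x\in\mathcal T$ with $x\in A_x$ and
\[
\limsup_{n\to\infty}\frac1{s_n}\log\Pro[\xi_n\in A_x]\ \le\ -\min\bigl(\mathscr I(x),\,1/\delta\bigr)+\delta.
\]
The family $\{A_x\}_{x\in K}$ is an open cover of $K$, so by compactness there exist $x_1,\dots,x_N\in K$ with $K\subseteq A_{x_1}\cup\cdots\cup A_{x_N}$. Using subadditivity and the elementary identity $\limsup_n\frac1{s_n}\log\sum_{i=1}^N a_n^{(i)}=\max_i\limsup_n\frac1{s_n}\log a_n^{(i)}$, we obtain
\[
\limsup_{n\to\infty}\frac1{s_n}\log\Pro[\xi_n\in K]\ \le\ \max_{1\le i\le N}\bigl(-\min(\mathscr I(x_i),1/\delta)+\delta\bigr)\ \le\ -\min\!\left(\inf_{x\in K}\mathscr I(x),\,1/\delta\right)+\delta.
\]
Letting $\delta\downarrow 0$ gives the desired upper bound on compacts; the $\min$ with $1/\delta$ is only needed so that the estimate remains meaningful at points where $\mathscr I(x)=+\infty$.

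\textbf{Lower semicontinuity of $\mathscr I$.} The representation $\mathscr I(x)=\sup_{A\in\mathcal T,\,x\in A}\bigl(-\limsup_n\frac1{s_n}\log\Pro[\xi_n\in A]\bigr)$ is the key. Suppose $x_k\to x$ in $M$ and $\alpha:=\liminf_{k\to\infty}\mathscr I(x_k)<\infty$; we claim $\mathscr I(x)\le\alpha$. For any $A\in\mathcal T$ with $x\in A$, openness of $A$ implies $x_k\in A$ for all sufficiently large $k$, and then
\[
-\limsup_{n\to\infty}\frac1{s_n}\log\Pro[\xi_n\in A]\ \le\ \mathscr I(x_k).
\]
Passing to the $\liminf$ in $k$ yields $-\limsup_n\frac1{s_n}\log\Pro[\xi_n\in A]\le\alpha$; taking the supremum over such $A$ gives $\mathscr I(x)\le\alpha$, as required. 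I expect no serious obstacle here: the only mild subtlety is the bookkeeping in the upper bound when $\mathscr I(x)$ may be infinite on $K$, which is handled by the truncation $\min(\mathscr I(x),1/\delta)$ above.
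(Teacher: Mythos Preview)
Your argument is correct and is precisely the standard proof of this fact; the paper does not give its own proof but simply cites \cite[Theorem~4.1.11]{DZ}, whose proof proceeds exactly along the lines you wrote (lower bound via monotonicity on a basis element, upper bound via a finite subcover and the principle of the largest term, and lower semicontinuity from the sup representation). The only implicit assumption you use is $s_n\to\infty$ in the step $\limsup_n\frac{1}{s_n}\log\sum_{i=1}^N a_n^{(i)}=\max_i\limsup_n\frac{1}{s_n}\log a_n^{(i)}$, which is standard in this setting.
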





It will be rather important for us to deduce from an already existing large deviations principle a new one by applying various transformations. We first consider the large deviations behavior under direct products. Let $M_1$ and $M_2$ be  metric spaces. Assume that $(\xi_n)_{n\in\N}$ is a sequence of $M_1$-valued random elements and that $(\eta_n)_{n\in\N}$ is a sequence of $M_2$-valued random elements. Assuming that both satisfy LDPs with the same speed, does then also the sequence $((\xi_n,\eta_n))_{n\in\N}$ of $M_1\times M_2$-valued random elements satisfy an LDP and, if so, what is its rate function? The following result can be found in \cite{APT2018}.

\begin{proposition}\label{JointRateFunction}
Assume that $(\xi_n)_{n\in\N}$ satisfies an LDP on a metric space $M_1$ with speed $s_n$ and good rate function $\mathscr{I}_{\xi}$ and that $(\eta_n)_{n\in\N}$ satisfies an LDP on a metric space $M_2$ with the same speed $s_n$ and good rate function $\mathscr{I}_\eta$. Then, if $\xi_n$ and $\eta_n$ are independent for every $n\in\N$, the sequence of pairs $((\xi_n,\eta_n))_{n\in\N}$ satisfies an LDP on $M_1\times M_2$ with speed $s_n$ and rate function
$$
\mathscr{I}_{(\xi,\eta)}(x)=\mathscr{I}_\xi(x_1)+\mathscr{I}_\eta(x_2),
\quad
x=(x_1,x_2)\in M_1\times M_2.
$$
\end{proposition}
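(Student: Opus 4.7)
My plan is to follow the two-step scheme already available from Propositions~\ref{prop:equivalence weak and full LDP} and \ref{prop:basis topology}: first establish a \emph{weak} LDP for $((\xi_n,\eta_n))_{n\in\N}$ on $M_1\times M_2$ with the proposed sum rate function $\mathscr{I}_{(\xi,\eta)}(x_1,x_2)=\mathscr{I}_\xi(x_1)+\mathscr{I}_\eta(x_2)$, and then upgrade it to a full LDP by verifying exponential tightness of the pair. Goodness of $\mathscr{I}_{(\xi,\eta)}$ comes for free once the statement is established: sums of lower semi-continuous functions are lower semi-continuous, and the level set $\{\mathscr{I}_\xi(x_1)+\mathscr{I}_\eta(x_2)\le\alpha\}$ is a closed subset of the compact set $\{\mathscr{I}_\xi\le\alpha\}\times\{\mathscr{I}_\eta\le\alpha\}$.

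For the weak LDP I would apply Proposition~\ref{prop:basis topology} with the basis $\mathcal T=\{A_1\times A_2: A_i\subseteq M_i \text{ open}\}$ of the product topology. The key input is independence: for any $A_1\times A_2\in\mathcal T$,
\[
\tfrac{1}{s_n}\log\Pro[(\xi_n,\eta_n)\in A_1\times A_2]=\tfrac{1}{s_n}\log\Pro[\xi_n\in A_1]+\tfrac{1}{s_n}\log\Pro[\eta_n\in A_2],
\]
so both $\limsup$ and $\liminf$ split additively across the two marginals. Taking the infimum over product neighborhoods of $(x_1,x_2)$ then decouples into separate infima over open neighborhoods of $x_1$ and of $x_2$, and the individual LDPs, combined with lower semi-continuity of $\mathscr{I}_\xi$ and $\mathscr{I}_\eta$ (which allows shrinking $A_i$ down to $\{x_i\}$), identify each of these infima with the respective rate function value. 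This delivers the twin identity required by Proposition~\ref{prop:basis topology} and thus a weak LDP with the sum rate function.

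For exponential tightness, I would use the fact that each marginal is exponentially tight, since a full LDP on a metric space with a good rate function always implies exponential tightness of the underlying sequence. Given $\alpha>0$, pick compact $K_i\subseteq M_i$ such that $\limsup_n \tfrac{1}{s_n}\log\Pro[\xi_n\notin K_1]\le -\alpha$ and the analogue for $\eta_n$ and $K_2$. Then $K_1\times K_2$ is compact in $M_1\times M_2$, and since
\[
\Pro[(\xi_n,\eta_n)\notin K_1\times K_2]\le\Pro[\xi_n\notin K_1]+\Pro[\eta_n\notin K_2],
\]
the elementary bound $\log(a+b)\le\log 2+\max(\log a,\log b)$ yields $\limsup_n \tfrac{1}{s_n}\log\Pro[(\xi_n,\eta_n)\notin K_1\times K_2]\le -\alpha$. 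Proposition~\ref{prop:equivalence weak and full LDP} then promotes the weak LDP to a full LDP. The only real bookkeeping obstacle is the decoupling of the infima in step two: one must check that shrinking product neighborhoods are cofinal among all open neighborhoods of $(x_1,x_2)$ and that the $\liminf$/$\limsup$ values over shrinking open neighborhoods $A_i\searrow\{x_i\}$ truly converge to $\mathscr{I}_\xi(x_i)$ and $\mathscr{I}_\eta(x_i)$; both are standard consequences of the full LDP with good rate function and do not require any genuinely new argument.
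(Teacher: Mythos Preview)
Your proposal is correct and follows a standard route: establish the weak LDP on the product basis via independence and additivity of the log-probabilities, then upgrade via exponential tightness of each marginal (which follows from goodness of the rate functions). The decoupling of the infima and the identification of $\inf_{A_i\ni x_i}\limsup/\liminf$ with $-\mathscr I_\xi(x_i)$, $-\mathscr I_\eta(x_i)$ are indeed routine consequences of the full LDP with good (hence lower semi-continuous) rate function on a metric space.

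There is nothing to compare against in this paper: the proposition is not proved here but simply quoted from \cite{APT2018}. Your argument is exactly the kind of self-contained verification one would supply in lieu of that citation.
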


Finally, we consider the possibility to `transport' a large deviations principle to another one by means of a continuous function. This device is known as the contraction principle and we refer to \cite[Theorem 4.2.1]{DZ} or \cite[Theorem 27.11(i)]{Kallenberg}.

\begin{proposition}\label{prop:contraction principle}
Let $M_1$ and $M_2$ be two metric spaces and let $F: M_1 \to M_2$ be a continuous function. Further, let $(\xi_n)_{n\in\N}$ be a sequence of $M_1$-valued random elements that satisfies an LDP with speed $s_n$ and good rate function $\mathscr{I}_\xi$. Then the sequence $(F(\xi_n))_{n\in\N}$ of $M_2$-valued random elements satisfies an LDP with the same speed and with good rate function $\mathscr{I} = \mathscr{I}_\xi\circ F^{-1}$, i.e.,
$$
\mathscr{I}(y):=\inf\{\mathscr{I}_\xi (x): x\in M_1, F(x)=y\},
\quad
y\in M_2,
$$
with the convention that $\mathscr{I}(y)=+\infty$ if $F^{-1}(\{y\})=\varnothing$.
\end{proposition}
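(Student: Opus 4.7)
The plan is to verify the three ingredients separately: goodness of the new rate function $\mathscr I$, the LDP upper bound on closed sets, and the LDP lower bound on open sets, exploiting only continuity of $F$ and compactness of the level sets of $\mathscr I_\xi$.

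First I would show that $\mathscr I$ is a good rate function. The key identity is
$$
\{y\in M_2:\mathscr I(y)\leq\alpha\}=F\big(\{x\in M_1:\mathscr I_\xi(x)\leq\alpha\}\big),\qquad \alpha\geq 0,
$$
and the inclusion $\supseteq$ is immediate from the definition, while $\subseteq$ uses that when $\mathscr I(y)\leq\alpha$, one can minimise $\mathscr I_\xi$ over the closed set $F^{-1}(\{y\})$ intersected with a slightly larger compact level set of $\mathscr I_\xi$ to actually attain the infimum (this needs goodness of $\mathscr I_\xi$, and also shows the infimum in the definition of $\mathscr I$ is achieved whenever $\mathscr I(y)<\infty$). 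Since continuous images of compact sets are compact, the level sets of $\mathscr I$ are compact; in a metric space this implies they are closed, and hence $\mathscr I$ is lower semi-continuous.

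Next I would handle the LDP bounds using the set-theoretic identity
$$
\inf_{x\in F^{-1}(A)}\mathscr I_\xi(x)=\inf_{y\in A}\,\inf_{x\in F^{-1}(\{y\})}\mathscr I_\xi(x)=\inf_{y\in A}\mathscr I(y),
$$
valid for \emph{any} subset $A\subseteq M_2$. For a closed set $C\subseteq M_2$, continuity of $F$ gives that $F^{-1}(C)$ is closed in $M_1$, so the upper bound of the LDP for $(\xi_n)$ yields
$$
\limsup_{n\to\infty}\frac{1}{s_n}\log\Pro[F(\xi_n)\in C]=\limsup_{n\to\infty}\frac{1}{s_n}\log\Pro[\xi_n\in F^{-1}(C)]\leq-\inf_{x\in F^{-1}(C)}\mathscr I_\xi(x)=-\inf_{y\in C}\mathscr I(y).
$$
For an open set $U\subseteq M_2$, $F^{-1}(U)$ is open in $M_1$, and the lower bound of the LDP for $(\xi_n)$ together with the same identity gives
$$
\liminf_{n\to\infty}\frac{1}{s_n}\log\Pro[F(\xi_n)\in U]\geq-\inf_{x\in F^{-1}(U)}\mathscr I_\xi(x)=-\inf_{y\in U}\mathscr I(y).
$$

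No step here is genuinely hard; the main subtlety is the careful use of goodness of $\mathscr I_\xi$ to ensure that level sets of $\mathscr I$ are compact (and hence that $\mathscr I$ is itself good). The case $F^{-1}(\{y\})=\varnothing$ is handled by the standard convention $\inf\varnothing=+\infty$, which matches the statement of the proposition.
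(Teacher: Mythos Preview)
Your proof is correct and is essentially the standard argument for the contraction principle. Note, however, that the paper does not give its own proof of this proposition at all: it simply cites \cite[Theorem 4.2.1]{DZ} and \cite[Theorem 27.11(i)]{Kallenberg}, so there is nothing to compare against beyond observing that what you wrote is precisely the classical proof one finds in those references.
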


\section{Distributional representation, free energy and strategy of the proof} 

We present here a Schechtman-Zinn type probabilistic representation for a random matrix chosen uniformly at random from $\B_{p,\beta}^n$ together with some results and concepts from potential theory used later. Also, before we continue with the technical details, we shall briefly explain the strategy of our proof.

\subsection{Distributional representation and free energy}\label{sec:schechtmann_zinn}

Let $Z_n$ be a random matrix uniformly distributed in the ball $\B_{p,\beta}^n$, $\beta\in\{1,2,4\}$. The basic fact we rely on is the following distributional representation of the empirical measure of $Z_n$, see~\cite[Corollary~4.3]{KPT2018a}:
\begin{equation}\label{eq:schechtman_zinn_uniform}
\mu_n
=
\frac 1n \sum_{i=1}^n \delta_{n^{1/p}\lambda_i(Z_n)}
\eqdistr
\frac 1n \sum_{i=1}^n \delta_{n^{1/p} U^{1 /\ell}{X_{i,n}\over\|X_n\|_p}},
\end{equation}
where $\eqdistr$ denotes equality in distribution and
\begin{itemize}
\item[(a)] $X_n = (X_{1,n},\dots,X_{n,n})$ is a random vector on $\R^n$ with joint Lebesgue density of the form
\begin{equation}\label{eq:distribution of (X_1,...,X_n)}
\frac 1 {C_{n,\beta,p}} e^{-n\sum\limits_{i=1}^n|x_i|^p} \prod_{1\leq i < j \leq n}|x_i-x_j|^{\beta},
\qquad
(x_1,\ldots,x_n)\in\R^n
\end{equation}
with a suitable normalization constant $C_{n,\beta,p}>0$ depending on $n$, $\beta$ and $p$,
\item[(b)] $U$ is a random variable with uniform distribution on $[0,1]$ that is independent of $X_n$,
\item[(c)] $\ell = \ell(n,\beta) =  {\beta n(n-1)\over 2} + \beta n$ is the (real) dimension of $\mathscr H_n(\mathbb{F}_\beta)$.
\end{itemize}
The above representation can be seen as a non-commutative counterpart to the representation of the uniform distribution on an the $\ell_p^n$-ball found by Schechtman and Zinn~\cite{SchechtmanZinn}. Similarly, if $Z_n$ is distributed according to the cone measure on $\Sph_{p,\beta}^{n-1}$, then
\begin{equation}\label{eq:schechtman_zinn_cone}
\mu_n
=
\frac 1n \sum_{i=1}^n \delta_{n^{1/p}\lambda_i(Z_n)}
\eqdistr
\frac 1n \sum_{i=1}^n \delta_{n^{1/p}{ X_{i,n}\over \|X_n\|_p}}.
\end{equation}

The distribution of $X_n$, known as ``matrix model''~\cite{PS2011}, has been intensely studied in the literature. Let us recall some results relevant to us.  The empirical distribution of $X_n$ is the random probability measure
$$
\nu_n := \frac 1n \sum_{i=1}^n \delta_{X_{i,n}}.
$$
We consider $\nu_n$ as a random element of the space $\mathcal M(\R)$ of Borel probability measures on $\R$ endowed with the weak topology.
For the following result we refer to ~\cite[Theorem 5.4.3]{hiai_petz}.
\begin{thm}\label{thm:LDP_nu_n}
The sequence $(\nu_n)_{n\in\N}$ satisfies a large deviations principle on $\mathcal M(\R)$ with speed $n^2$ and a good rate function
$$
\mathscr I_0(\mu)=
\begin{cases}
-  \frac \beta 2 \int_\R \int_\R \log |x-y| \mu(\dint x) \mu(\dint y) + \int_{\R} |x|^p \mu(\dint x) - B&\,: \int_{\R}|x|^p\mu(\dint x) <+\infty \\
+\infty &\,: \int_{\R}|x|^p\mu(\dint x) = +\infty,
\end{cases}
$$
where $B := \lim\limits_{n\to\infty} \frac1 {n^2}\log C_{n,\beta,p}$ with the constants $C_{n,\beta,p}$ given by \eqref{eq:distribution of (X_1,...,X_n)}.
\end{thm}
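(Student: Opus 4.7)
The starting observation is that the joint density \eqref{eq:distribution of (X_1,...,X_n)} can be rewritten as an exponential functional of the empirical measure $\nu_n$: one has
\[
-n\sum_{i=1}^n|x_i|^p+\frac{\beta}{2}\sum_{i\neq j}\log|x_i-x_j|=-n^2\int_{\R}|x|^p\,\nu_n(\dint x)+\frac{\beta n^2}{2}\iint_{x\neq y}\log|x-y|\,\nu_n(\dint x)\nu_n(\dint y),
\]
so, after dividing out $C_{n,\beta,p}$ whose logarithm should grow like $n^2 B$, the density on $\R^n$ is heuristically of the form $\exp(-n^2[\int|x|^p\dint\mu-(\beta/2)\iint\log|x-y|\dint\mu\dint\mu-B])$ evaluated at $\mu=\nu_n$. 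This accounts both for the speed $n^2$ and for the shape of the rate function $\mathscr I_0$.

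The plan is the standard Ben Arous--Guionnet / Hiai--Petz two-sided strategy. For the \emph{upper bound} at a measure $\mu_0\in\mathcal M(\R)$ I would regularize the logarithm by $\log^{(\epsilon)}(r):=\max(\log r,\log\epsilon)$, so that the functional $\mu\mapsto\iint\log^{(\epsilon)}|x-y|\,\mu(\dint x)\mu(\dint y)$ is weakly continuous and bounded above. Since $\log^{(\epsilon)}\geq\log$, on any small weak neighbourhood $U\ni\mu_0$ one obtains
\[
\Pro[\nu_n\in U]\leq \frac{1}{C_{n,\beta,p}}\exp\Big(-n^2\inf_{\mu\in U}\Big[\int_{\R}|x|^p\mu(\dint x)-\frac{\beta}{2}\iint\log^{(\epsilon)}|x-y|\,\mu(\dint x)\mu(\dint y)\Big]+o(n^2)\Big),
\]
and then one shrinks $U$ around $\mu_0$ and lets $\epsilon\downarrow 0$ by monotone convergence to recover $\mathscr I_0(\mu_0)$. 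For the \emph{lower bound} at a $\mu_0$ with $\mathscr I_0(\mu_0)<+\infty$, I would approximate $\mu_0$ by a compactly supported, absolutely continuous measure, partition its support into $n$ cells of equal $\mu_0$-mass and shrinking diameter, and lower-bound the joint density on the event that each $X_{i,n}$ falls in a distinct cell; the logarithmic interaction between distinct cells is well-approximated by $(\beta/2)\iint\log|x-y|\dint\mu_0\dint\mu_0$ up to $o(n^2)$, and the within-cell contribution is controlled by a small perturbation of the cell centers guaranteeing mutual separation.

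Exponential tightness is delivered by the confining potential: the set $K_R:=\{\mu\in\mathcal M(\R):\int|x|^p\dint\mu\leq R\}$ is weakly compact by Prokhorov, and the factor $e^{-n\sum|x_i|^p}$ kills $\Pro[\nu_n\notin K_R]$ at speed $n^2$ once $R$ is large. The constant $B$ then emerges automatically: matching the upper and lower bounds at the (unique) minimizer of $\mu\mapsto\int|x|^p\dint\mu-(\beta/2)\iint\log|x-y|\dint\mu\dint\mu$, which exists and is an Ullman-type equilibrium measure by classical weighted potential theory, pins down $B$ as the value of this minimum and simultaneously proves that $\lim n^{-2}\log C_{n,\beta,p}$ exists. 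The main technical obstacle is controlling the discrepancy between the genuine pair-interaction $\sum_{i\neq j}\log|X_{i,n}-X_{j,n}|$ and the singular functional $\iint\log|x-y|\dint\nu_n\dint\nu_n$: the regularization must simultaneously produce sharp upper estimates via $\log^{(\epsilon)}$ and sharp lower estimates via well-separated configurations, and verifying that the truncation and normalization errors remain $o(n^2)$ on both sides is where the real work lies.
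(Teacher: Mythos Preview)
The paper does not give its own proof of this theorem: it is quoted verbatim from Hiai--Petz \cite[Theorem~5.4.3]{hiai_petz} as a known input, and your sketch is precisely the Ben Arous--Guionnet / Hiai--Petz scheme that underlies that reference. So in spirit there is nothing to compare, and your outline (exponential-functional rewriting, regularized upper bound, discretized lower bound, exponential tightness via the confining potential, identification of $B$ at the equilibrium measure) is the right one.

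One step of your upper bound as written has a genuine gap, though. After replacing $\log$ by $\log^{(\epsilon)}$ and pulling the infimum over $\mu\in U$ outside, you absorb the rest into an $o(n^2)$; but what actually remains is the Lebesgue integral $\int_{\{(x_1,\ldots,x_n):\,\nu_n\in U\}}\dint x_1\cdots\dint x_n$, which is infinite because $U$ contains measures with arbitrarily spread support. The standard cure---and this is exactly how the paper itself handles the analogous upper bound in Lemma~\ref{lem:upper_bound}---is to bundle the confining potential into the interaction kernel: one truncates
\[
F(x,y)=-\tfrac{\beta}{2}\log|x-y|+\tfrac12\bigl(|x|^p+|y|^p\bigr)
\]
from above to $F_\alpha=\min(F,\alpha)$, so that $F_\alpha$ is bounded and continuous (hence $\mu\mapsto\iint F_\alpha\,\dint\mu\,\dint\mu$ is weakly continuous) while a residual factor $e^{-\sum_i|x_i|^p}$ survives to make the remaining Lebesgue integral finite and of order $e^{O(n)}$. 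Equivalently, you may reserve a small fraction of the potential, writing $n=(n-1)+1$ and using $(n-1)\sum_i|x_i|^p$ for the rate-function bound and $\sum_i|x_i|^p$ for integrability. With either fix your argument goes through and reproduces the Hiai--Petz proof.
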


The constant $B$ in the theorem is the limit of the so-called free energy $ n^{-2}\log C_{n,\beta,p}$ as $n\to\infty$. Let us determine its precise value. By~\cite[Theorem 5.4.3]{hiai_petz}, the function $\mathscr I_0: \mathcal M(\R) \to [0,+\infty]$ has a unique minimizer $\mu_*^{(p)}$ satisfying $\mathscr I_0\big(\mu_*^{(p)}\big) = 0$, which means that
\[
B = -  \frac \beta 2 \int_\R \int_\R \log |x-y| \,\mu_*^{(p)}(\dint x)\, \mu_*^{(p)}(\dint y) + \int_{\R} |x|^p \mu_*^{(p)}(\dint x).
\]
By Proposition 5.3.4 from~\cite[p.~196]{hiai_petz}, the Lebesgue density of the minimizer is $x\mapsto h_p(x/r_p)/r_p$, where $h_p$ is the Ullman density given in~\eqref{eq:ullman_def}, and
$$
r_p = \left(\frac {\beta}{2p \alpha_p}\right)^{1/p} \qquad\text{and}\qquad
\alpha_p =
\frac {\Gamma\left(\frac{p+1}{2}\right)} {p \sqrt \pi  \Gamma \left(\frac p2\right)}
= \int_{-1}^1  |x|^p h_p(x) \dint x.
$$
Using this and the fact that by \cite[Proposition~5.3.4]{hiai_petz}
\[
\int_{\R}\int_\R \log |x-y| \mu_*^{(p)}(\dint x) \mu_*^{(p)}(\dint y) = \log \frac{r_p}2 - \frac 1 {2p}\,,
\]
we obtain the following explicit formula for the limiting free energy:
\begin{equation}\label{eq:B_const_formula}
B = \lim_{n\to\infty} \frac1 {n^2}\log C_{n,\beta,p} = \frac {\beta}{2p} \log \left(\frac{\beta \sqrt \pi \Gamma\left(\frac p2\right)}{2 \Gamma\left(\frac{p+1}{2}\right)}\right) - \frac \beta 2 \log 2 - \frac {3\beta}{4p}\,.
\end{equation}

\subsection{Strategy of the proof}\label{Sec:strategy}
We adopt the notation introduced in the previous sections. The outline of our proof of Theorem~\ref{theo:main} is as follows.

\vskip 2mm
\noindent
\textbf{Step 1:}
We prove an LDP for the sequence of pairs
\begin{equation}\label{eq:pair}
\left(\nu_n, \int_\R |x|^p \nu_n(\dint x)\right) = \left(\frac{1}{n}\sum_{i=1}^n\delta_{X_{i,n}},\frac{1}{n}\sum_{i=1}^n |X_{i,n}|^p\right),
\quad n\in\N,
\end{equation}
of empirical measures of the vector $X_n$ and empirical $p$th moments of these measures.
In a first attempt, it is natural to try to apply the contraction principle to the LDP for $\nu_n$ (stated above in Theorem \ref{thm:LDP_nu_n}) with the mapping $\mu \mapsto (\mu, \int_\R |x|^p \mu(\dint x))$. However, this mapping is not continuous in the weak topology.  At first sight, this may look like a merely technical issue, but it is not.  As we shall see, the correct rate function for the pair $(\nu_n, \int_\R |x|^p \nu_n(\dint x))$ does not coincide with what one would expect by a naive application of the contraction principle.


\vskip 2mm
\noindent
\textbf{Step 2:} Using the contraction principle, we derive an LDP for the sequence of random measures
\[
\frac{1}{n}\sum_{i=1}^n \delta_{n^{1/p}{ X_{i,n}\over \|X\|_p}},
\qquad n\in\N,
\]
thus proving Theorem~\ref{theo:main} in the case when $Z_n$ is sampled according to the cone measure on $\Sph_{p,\beta}^{n-1}$.

\vskip 2mm
\noindent
\textbf{Step 3:}
If $U$ is uniformly distributed on $[0,1]$, then the sequence $(U^{1/\ell})_{n\in\N}$, where we recall that $\ell = {\beta n(n-1)\over 2} + \beta n$, satisfies an LDP with rate function $x\mapsto -\frac{\beta}{2}\log x$ if  $x\in(0,1]$ and $+\infty$ otherwise. Applying the contraction principle, we derive an LDP for the sequence of random measures
\[
\frac 1n \sum_{i=1}^n \delta_{n^{1/p} U^{1 /\ell}{X_{i,n}\over\|X_n\|_p}}, \qquad n\in\N,
\]
thus proving Theorem~\ref{theo:main} in the case when $Z_n$ is distributed uniformly on  the ball $\B_{p,\beta}^{n}$.



\section{Step 1 -- A large deviations principle for the empirical pair}
The following theorem captures the large deviations behavior for the sequence of pairs of empirical measures $\nu_n$ of the vector $X_n$ and empirical $p$th moments of these measures. 

\begin{thm}\label{thm:ldp empirical pair}
Let $0 < p <+\infty$ and $\beta\in(0,+\infty)$. Assume that for each $n\in\N$, the random vector $X_n=(X_{1,n},\dots,X_{n,n})$ has joint density given by \eqref{eq:distribution of (X_1,...,X_n)}. Then the sequence of random elements
$$
\left(\nu_n,\int_\R |x|^p \nu_n(\dint x)\right) = \left(\frac{1}{n}\sum_{i=1}^n\delta_{X_{i,n}},\frac{1}{n}\sum_{i=1}^n |X_{i,n}|^p\right),
\quad n\in\N,
$$
satisfies an LDP with speed $n^2$ and good rate function $\mathscr I_1:\mathcal M(\R) \times [0,+\infty)\to [0,+\infty]$ given by
\begin{equation}\label{eq:def_I_1}
\mathscr I_1(\mu,m) :=
\begin{cases}
- \frac \beta 2 \int_{\R}\int_{\R} \log|x-y|\,\mu(\dint x)\,\mu(\dint y) + m + B &: m\geq \int_{\R}|x|^p\,\mu(\dint x) \\
+\infty & : m<\int_{\R}|x|^p\,\mu(\dint x),
\end{cases}
\end{equation}
where $B$ is the same constant as in~\eqref{eq:B_const_formula}.
\end{thm}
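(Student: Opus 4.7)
My plan is to work directly with the joint density~\eqref{eq:distribution of (X_1,...,X_n)} of $X_n$, which can be rewritten in terms of the empirical measure $\nu_n$ and empirical moment $m_n = \frac{1}{n}\sum|x_i|^p$ as
$$\frac{1}{C_{n,\beta,p}}\exp\Big(n^2\Big[-m_n + \frac{\beta}{2}\iint_{s\neq t}\log|s-t|\,\dint\nu_n(s)\,\dint\nu_n(t)\Big]\Big).$$
This rewriting already explains the shape of the putative rate function $-\frac{\beta}{2}\iint\log|s-t|\,\mu(\dint s)\mu(\dint t) + m + B$: the external potential contributes the term $m$ linearly through the second coordinate itself, the Vandermonde interaction contributes a functional of $\nu_n$ alone, and $\log C_{n,\beta,p} = n^2 B + o(n^2)$ produces the additive constant. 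The admissibility constraint $m\geq\int|x|^p\,\dint\mu$ reflects that $\mu\mapsto\int|x|^p\,\dint\mu$ is only lower semicontinuous in the weak topology: a vanishing fraction of outlier particles can inflate $m_n$ above $\int|x|^p\,\dint\mu$ without altering the weak limit of $\nu_n$, but the reverse inflation is impossible. By Proposition~\ref{prop:basis topology} I only need to control $\Pro[(\nu_n,m_n)\in\mathcal U\times(m-\delta,m+\delta)]$ on a product basis, and then combine with exponential tightness via Proposition~\ref{prop:equivalence weak and full LDP}.

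\textbf{Upper and lower bounds.} For the upper bound, on the event $\{(\nu_n,m_n)\in\mathcal U\times(m-\delta,m+\delta)\}$ I would use $e^{-n^2 m_n}\leq e^{-n^2(m-\delta)}$ together with the truncation $\iint\log|s-t|\,\dint\nu_n^{\otimes 2}\leq\iint\log^M|s-t|\,\dint\nu_n^{\otimes 2}$, where $\log^M:=\max(\log,-M)$ is bounded and weakly continuous, hence $o(1)$-close to $\iint\log^M|s-t|\,\dint\mu^{\otimes 2}$ when $\mathcal U$ is small. The configurations in the event live in a box of radius $(n(m+\delta))^{1/p}$, so the Lebesgue volume of admissible configurations is $e^{O(n\log n)}=e^{o(n^2)}$. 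Combining these three estimates with $\log C_{n,\beta,p}=n^2 B + o(n^2)$, then sending $\delta\downarrow 0$, shrinking $\mathcal U$, and taking $M\uparrow\infty$ (monotone convergence yields $\iint\log^M\,\dint\mu^{\otimes 2}\downarrow\iint\log\,\dint\mu^{\otimes 2}$) produces the upper bound $-\mathscr I_1(\mu,m)$ whenever $m\geq\int|x|^p\,\dint\mu$. In the opposite case I would pick $R>0$ with $\int(|x|^p\wedge R)\,\dint\mu>m+\eta$ and enforce $|\nu_n(|x|^p\wedge R)-\mu(|x|^p\wedge R)|<\eta/2$ inside $\mathcal U$; then any $\nu_n\in\mathcal U$ forces $m_n\geq\nu_n(|x|^p\wedge R)>m+\eta/2$, rendering the event empty for small $\delta$. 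For the matching lower bound, I would adopt the classical Coulomb gas construction: place $n-k_n$ bulk particles at the quantiles of $\mu$ and $k_n=\lfloor\log n\rfloor$ outlier particles spaced slightly above $r_n:=(n(m-\int|x|^p\,\dint\mu)/k_n)^{1/p}$, then perturb each particle inside a small box. A direct check shows that the bulk--outlier and outlier--outlier contributions to the Vandermonde are of order $O(nk_n\log r_n)=o(n^2)$, so the density on the resulting tube is bounded below by $e^{-n^2\mathscr I_1(\mu,m)+o(n^2)}$, while the tube has Lebesgue volume $e^{-O(n\log n)}=e^{o(n^2)}$.

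\textbf{Exponential tightness and main obstacle.} For exponential tightness in the moment coordinate I would split $e^{-n\sum|x_i|^p}=e^{-n\sum|x_i|^p/2}\cdot e^{-n\sum|x_i|^p/2}$; on $\{m_n>R\}$ the first factor is at most $e^{-n^2 R/2}$, while the second factor combined with the Vandermonde is a partition function of the same shape with halved potential, which equals $e^{O(n^2)}\,C_{n,\beta,p}$ after the substitution $x\mapsto 2^{1/p}x$. This yields $\Pro[m_n>R]\leq e^{-n^2 R/2 + O(n^2)}$, decaying superexponentially in $n^2$ for $R$ large, while exponential tightness of $\nu_n$ in $\mathcal M(\R)$ is inherited from Theorem~\ref{thm:LDP_nu_n} (alternatively from Markov via $\nu_n([-T,T]^c)\leq T^{-p}m_n$). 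The main obstacle I anticipate is the lower bound: one has to tune $k_n$, $r_n$, and the tube radius so that the empirical measure stays close to $\mu$, the empirical moment stays close to $m$, the cross interaction energy between bulk and outliers remains $o(n^2)$, and the tube has Lebesgue volume at least $e^{o(n^2)}$ all at once. A preliminary approximation of $\mu$ by compactly supported measures with continuous densities (for which the quantile construction is unambiguous and the interaction energy is finite) will likely be needed to handle general $\mu$ with $\mathscr I_1(\mu,m)<+\infty$.
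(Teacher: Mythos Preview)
Your overall architecture matches the paper exactly: weak LDP on a product basis via Proposition~\ref{prop:basis topology}, then exponential tightness via Proposition~\ref{prop:equivalence weak and full LDP}. The treatment of the case $m<\int|x|^p\,\mu(\dint x)$ by the truncated moment $|x|^p\wedge R$, the lower bound via quantile placement plus outliers, and the exponential tightness via splitting the potential are all correct and essentially the paper's arguments (the paper uses a single outlier at height $\sim n^{1/p}$ rather than your $\lfloor\log n\rfloor$ outliers at height $\sim(n/\log n)^{1/p}$, but either works).

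There is, however, a genuine gap in your upper bound. You write that ``$\log^M:=\max(\log,-M)$ is bounded and weakly continuous'', but $\log^M|s-t|$ is \emph{not} bounded: it blows up to $+\infty$ as $|s-t|\to\infty$. Consequently the functional $\nu\mapsto\iint\log^M|s-t|\,\nu(\dint s)\nu(\dint t)$ is not weakly continuous, and you cannot conclude that it is $o(1)$-close to the corresponding $\mu$-integral merely from $\nu_n\in\mathcal U$. Your box observation (all particles lie in $[-(n(m+\delta))^{1/p},(n(m+\delta))^{1/p}]$) only gives an $n$-dependent upper bound $\log^M|s-t|\leq\frac{1}{p}\log n+O(1)$, which is useless for the comparison with $\mu$. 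In short, your truncation handles the singularity of the logarithmic kernel at the diagonal but not its growth at infinity.

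The paper's remedy, which is the standard device in this context (Ben Arous--Guionnet, Hiai--Petz), is to transfer a small fraction $\gamma>0$ of the confining potential into the kernel: set
\[
F(x,y;\gamma):=-\tfrac{\beta}{2}\log|x-y|+\gamma\,\tfrac{|x|^p+|y|^p}{2},
\qquad
F_\alpha(x,y;\gamma):=\min\{F(x,y;\gamma),\alpha\}.
\]
Since $\gamma|x|^p$ dominates $\log|x|$ at infinity, $F$ is bounded \emph{below}, so $F_\alpha$ is a genuine bounded continuous function and $\nu\mapsto\iint F_\alpha\,\dint\nu^{\otimes 2}$ is weakly continuous. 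The density rewrites as $\exp\big(-(n+\gamma-\gamma n)\sum|x_i|^p-2\sum_{i<j}F(x_i,x_j;\gamma)\big)$; on $\{m_n\in(m-\delta,m+\delta)\}$ the first exponent is controlled by $(m-\delta)n^2(\gamma-1)+O(n)$, and after passing to the bounded $F_\alpha$ one can take $n\to\infty$, then $\varepsilon,\delta\downarrow 0$ (weak continuity), then $\gamma\downarrow 0$ and $\alpha\uparrow\infty$ (monotone convergence) to recover $-B-m+\frac{\beta}{2}\iint\log|x-y|\,\mu(\dint x)\mu(\dint y)$. This extra $\gamma$-parameter is the missing idea in your upper bound.
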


The proof is split into several parts which are considered in the subsections below. The main task is to establish a weak large deviations principle by verifying the conditions of Proposition~\ref{prop:basis topology} on the space $\mathcal M(\R)\times [0,+\infty)$ equipped with its product topology arising from the weak topology on $\mathcal M(\R)$ and the standard topology on $[0,+\infty)$. More precisely, our aim is to show that for every $(\mu,m)\in \mathcal M(\R) \times [0,+\infty)$, we have
\begin{align*}
&\inf_{G:\, (\mu,m)\in G} \limsup_{n\to\infty} \frac 1 {n^2} \log \Pro\left[\left(\nu_n, \int_{\R}|x|^p\,\nu_n(\dint x)\right)\in G\right] \leq - \mathscr I_1(\mu,m)
\end{align*}
and
\begin{align*}
&\inf_{G:\, (\mu,m)\in G} \liminf_{n\to\infty} \frac 1 {n^2} \log \Pro\left[\left(\nu_n, \int_{\R}|x|^p\,\nu_n(\dint x)\right)\in G\right] \geq -\mathscr  I_1(\mu,m),
\end{align*}
where the infimum is taken over $G$'s belonging to some neighborhood base of the pair $(\mu,m)$ with respect to the product topology just described. This will be done in Lemmas~\ref{lem:weak_LDP_m_smaller_moment}, \ref{lem:lower_bound}, and \ref{lem:upper_bound}.
The exponential tightness, needed to complete the proof of Theorem \ref{thm:ldp empirical pair}, will be verified in Lemma~\ref{lem:exp_tight}.

\subsection{The case \texorpdfstring{$m<\int_\R |x|^p \mu(\dint x)$}{m < int |x|p mu(dx)}}

We shall prove the following lemma, which then implies that the rate function $\mathscr I_1(\mu,m)$ of the empirical pair is $+\infty$ when $m$ is smaller than the $p$th moment of the measure $\mu$.

\begin{lemma}\label{lem:weak_LDP_m_smaller_moment}
Let $\mu\in \mathcal M(\R)$ and $m\in [0,+\infty)$ be such that
$
m < \int_{\R}|x|^p\,\mu(\dint x).
$
Then there is a neighborhood $G$ of $(\mu, m)$ in the product space $\mathcal M(\R) \times [0,+\infty)$ such that, for all $n\in\N$,
\[
\Pro\left[\left(\nu_n, \int_{\R}|x|^p\,\nu_n(\dint x) \right)\in G\right]=0.
\]
\end{lemma}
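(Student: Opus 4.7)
The plan is to exploit the fact that although $\mu \mapsto \int_\R |x|^p \mu(\dint x)$ is not continuous on $\mathcal M(\R)$ with respect to the weak topology (mass can escape to infinity), it is \emph{lower semicontinuous}. Since $|x|^p$ is non-negative and continuous, the standard Portmanteau-type argument shows that for every $c \in \R$, the set
\[
\left\{\mu' \in \mathcal M(\R) : \int_\R |x|^p \, \mu'(\dint x) > c \right\}
\]
is open in the weak topology. This is the only non-trivial analytic input needed.

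Set $M := \int_\R |x|^p \, \mu(\dint x)$, so by assumption $m < M$ (and in particular $M > 0$, with $M$ possibly equal to $+\infty$). Choose $\varepsilon > 0$ small enough that $m + \varepsilon < M - \varepsilon$ (for instance, if $M < \infty$ take $\varepsilon := (M-m)/3$, and if $M = \infty$ pick any finite $c > m+1$ in place of $M-\varepsilon$ below). Define
\[
V := \left\{\mu' \in \mathcal M(\R) : \int_\R |x|^p \, \mu'(\dint x) > M - \varepsilon\right\},
\qquad G := V \times (m - \varepsilon, m + \varepsilon).
\]
By lower semicontinuity, $V$ is an open neighborhood of $\mu$, and $(m-\varepsilon, m+\varepsilon)$ is an open neighborhood of $m$ in $[0,+\infty)$; hence $G$ is an open neighborhood of $(\mu, m)$ in the product space $\mathcal M(\R) \times [0,+\infty)$.

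Now I want to argue that this $G$ is disjoint from the range of the random pair, so that the probability is identically zero. Indeed, the coordinates of the pair $(\nu_n, \int_\R |x|^p \nu_n(\dint x))$ are tautologically linked: if $\nu_n \in V$, then by the definition of $V$ one has $\int_\R |x|^p \, \nu_n(\dint x) > M - \varepsilon$, while membership in the second factor forces $\int_\R |x|^p \, \nu_n(\dint x) < m + \varepsilon$. Since $m + \varepsilon < M - \varepsilon$ by the choice of $\varepsilon$, these two conditions cannot be satisfied simultaneously, so the event is empty and hence has probability $0$.

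The argument is essentially a one-line observation once one notices that the second component of the pair is a functional of the first; the only real point is to select a neighborhood of $\mu$ on which the (lower semicontinuous) $p$th-moment functional stays above $m + \varepsilon$. I do not expect any genuine obstacle; the mild subtlety is simply to remember that the $p$th-moment functional is only lower semicontinuous (not continuous) on $\mathcal M(\R)$, which is precisely why one needs a \emph{lower} bound on $\int |x|^p \, \mu'(\dint x)$ near $\mu$, and why this argument works in the direction $m < \int |x|^p \mu(\dint x)$ but would fail in the opposite direction (the latter is precisely the source of the subtlety mentioned in Step 1 of the strategy).
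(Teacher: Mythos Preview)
Your proof is correct and rests on the same key observation as the paper's: the functional $\mu'\mapsto\int_\R |x|^p\,\mu'(\dint x)$ is lower semicontinuous in the weak topology, so a superlevel set of it furnishes the required open neighborhood of $\mu$. The paper does not invoke lower semicontinuity by name; instead it unpacks it explicitly, defining the neighborhood of $\mu$ via the bounded continuous test function $x\mapsto\min\{A,|x|^p\}$ for a sufficiently large truncation level $A$ (chosen by monotone convergence), and then treating the cases $\int|x|^p\,\mu(\dint x)<\infty$ and $=\infty$ separately. Your version is more conceptual and avoids the case split; the paper's version is more self-contained and makes the neighborhood concrete in the standard weak-topology basis. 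Since the usual proof of lower semicontinuity is precisely this truncation argument, the two approaches are really the same idea in different packaging. One small stylistic point: your handling of the case $M=\infty$ is slightly informal; it would be cleaner to simply fix any real $c$ with $m<c<M$, set $V=\{\mu':\int|x|^p\,\mu'(\dint x)>c\}$, and choose $\varepsilon$ with $m+\varepsilon<c$, which covers both cases at once.
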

\begin{proof}
Our goal is to construct explicitly a neighborhood $\mathcal O(\mu)\subseteq \mathcal M(\R)$ of $\mu$ (in the weak topology) and a neighborhood $\mathcal O(m)\subseteq [0,+\infty)$ of $m$  such that
\[
\left\{\nu \in \mathcal M(\R):  \left(\nu,\int_{\R}|x|^p\,\nu(\dint x) \right)\in \mathcal O(\mu) \times \mathcal O(m) \right\}=\varnothing.
\]
Putting $G:=\mathcal{O}(\mu)\times\mathcal{O}(m)$ then completes the proof of the lemma.

\vspace*{2mm}
\noindent
\textit{Case 1.}
Assume first that $\int_{\R}|x|^p\,\mu(\dint x) < +\infty$. Since $m < \int_{\R}|x|^p\,\mu(\dint x)$,
there exists some $\alpha>0$ such that
\begin{equation}\label{eq:m_plus_alpha}
m+\alpha = \int_{\R}|x|^p\,\mu(\dint x).
\end{equation}
By monotone convergence, we can find $A>0$ (sufficiently large) such that
\begin{equation}\label{eq:distance of cut-off A to mu moment}
\left|\int_{\R} \min\{A,|x|^p\} \,\mu(\dint x) - \int_{\R} |x|^p \,\mu(\dint x)\right| < \frac \alpha 3.
\end{equation}
Now we define the weak neighborhood of $\mu$ as
\[
\mathcal O(\mu) := \left\{ \nu \in\mathcal M(\R)\,:\, \left| \int_{\R} \min\{A,|x|^p\} \,\nu(\dint x) - \int_{\R} \min\{A,|x|^p\} \,\mu(\dint x)\right| < \frac \alpha 3 \right\}.
\]
Indeed, this is a neighborhood in the weak topology because the function $x\mapsto \min\{A,|x|^p\}$ is continuous and bounded. Assume now that $\nu \in \mathcal O(\mu)$. Then it follows from~\eqref{eq:distance of cut-off A to mu moment} and the triangle inequality that
\[
\left|\int_{\R} \min\{ A,|x|^p\} \,\nu(\dint x) - \int_{\R}|x|^p\,\mu(\dint x) \right| \leq \frac {2\alpha}{3}.
\]
From this and~\eqref{eq:m_plus_alpha} we infer
\[
\int_{\R} |x|^p \,\nu(\dint x) \geq \int_{\R} \min\{ A,|x|^p\} \,\nu(\dint x) \geq
\int_{\R}|x|^p\,\mu(\dint x) -\frac {2\alpha}{3}
=
m + \frac \alpha 3,
\]
which implies that $\int_{\R}|x|^p\,\nu(\dint x) \notin \mathcal O(m)$ if we define $\mathcal O (m) := (m - \frac \alpha6, m + \frac \alpha 6) \cap [0,+\infty)$.

\vspace*{2mm}
\noindent
\textit{Case 2.}
The case $\int_{\R}|x|^p\,\mu(\dint x) = +\infty$ is similar. By monotone convergence, we can find a sufficiently large $A>0$ such that
\begin{equation}\label{eq:distance of cut-off A to mu moment1}
\int_{\R} \min\{A,|x|^p\} \,\mu(\dint x) > 3m.
\end{equation}
Consider the following weak neighborhood of $\mu$ defined as
\[
\mathcal O(\mu) := \left\{ \nu \in\mathcal M(\R)\,:\, \left| \int_{\R} \min\{A,|x|^p\} \,\nu(\dint x) - \int_{\R} \min\{A,|x|^p\} \,\mu(\dint x)\right| < m \right\}.
\]
It follows directly from \eqref{eq:distance of cut-off A to mu moment1} that every $\nu \in \mathcal O(\mu)$  satisfies
$$
\int_{\R} \min\{A,|x|^p\} \,\nu(\dint x) > 2m.
$$
In particular, we have $\int_{\R}|x|^p\,\nu(\dint x) > 2m$. Defining the neighborhood $\mathcal O (m) := (\frac 12 m , \frac 32 m)$, it follows that $\int_{\R}|x|^p\nu(\dint x) \notin \mathcal O(m)$.
\end{proof}

\subsection{The case \texorpdfstring{$m\geq \int_\R |x|^p \mu(\dint x)$}{m >= int |x|p mu(dx)}}

We consider here the case where $m\geq\int_\R |x|^p \mu(\dint x)$. We prove lower and upper bounds in the weak LDP separately. Let us start with the lower bound.

\subsubsection{The lower bound} 
First we note that since we are in the case $m\geq \int_{\R}|x|^p\,\mu(\dint x)$, if we assume $m=0$, then necessarily $\mu=\delta_0$ (the Dirac measure at $0$) and so $\mathscr I_1(\delta_0,0) = +\infty$. In particular, for any neighborhood $G$ of $(\delta_0,0)$,
\begin{align*}
\liminf_{n\to\infty} \frac{1}{n^2} \log \Pro\left[\left(\nu_n, \int_{\R}|x|^p \nu_n(\dint x)\right)\in G\right]
\geq
-\infty = -\mathscr I_1(\delta_0,0),
\end{align*}
trivially holds. We therefore assume from now on that $m>0$. Our aim is to prove the following result.

\begin{lemma}\label{lem:lower_bound}
Let $\mu\in\mathcal M(\R)$ and $m\in (0,+\infty)$ be such that $m > \int_\R |x|^p \mu(\dint x)$. Then, for every neighborhood $G$ of $(\mu,m)$ in the space $\mathcal M(\R) \times [0,+\infty)$, we have
\begin{align*}
\liminf_{n\to\infty} \frac{1}{n^2} \log&\, \Pro\left[\left(\nu_n, \int_{\R}|x|^p \nu_n(\dint x)\right)\in G\right]\\
&\geq
\frac \beta 2 \int_{\R}\int_{\R} \log|x-y|\,\mu(\dint x)\,\mu(\dint y) - B - m.
\end{align*}
\end{lemma}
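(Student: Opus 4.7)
The contraction principle does not directly apply to Theorem~\ref{thm:LDP_nu_n}, because $\mu\mapsto\int_{\R}|x|^p\mu(\dint x)$ is not continuous in the weak topology. Instead the plan is to obtain the lower bound by constructing, for each small $\eta>0$ and each sufficiently large $n$, a deterministic box of configurations of $X_n$ contained in the event, and integrating the joint density~\eqref{eq:distribution of (X_1,...,X_n)} over it. Without loss of generality I would first assume, by a standard approximation argument, that $\mu$ is compactly supported with a bounded continuous density and finite logarithmic energy (otherwise the claimed bound is $-\infty$ and nothing is to prove).

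The key perturbation is
$$
\tilde\mu_\eta:=(1-\eta)\mu+\eta\,\rho_{R(\eta)},\qquad\eta\in(0,1),
$$
where $\rho_{R(\eta)}$ is the uniform distribution on $[R(\eta),R(\eta)+1]$ and $R(\eta)>0$ is chosen so that $\int_{\R}|x|^p\tilde\mu_\eta(\dint x)=m$. Since $m>\int_{\R}|x|^p\mu(\dint x)$, this forces $R(\eta)\to\infty$ with $\eta R(\eta)^p$ bounded as $\eta\downarrow 0$, whence $\tilde\mu_\eta\to\mu$ weakly (as $\|\tilde\mu_\eta-\mu\|_{\mathrm{TV}}\le 2\eta$) and
$$
\iint_{\R\times\R}\log|x-y|\,\tilde\mu_\eta(\dint x)\tilde\mu_\eta(\dint y)\longrightarrow\iint_{\R\times\R}\log|x-y|\,\mu(\dint x)\mu(\dint y)\qquad\text{as }\eta\downarrow 0,
$$
because the cross-energy contributes $O(\eta\log R(\eta))=O(\eta\log(1/\eta))$ and the self-energy of $\rho_{R(\eta)}$ contributes $O(\eta^2)$.

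I would then place deterministic quantile points $y_1^{(n)}<\cdots<y_n^{(n)}$ of $\tilde\mu_\eta$, whose consecutive spacings are at least $c(\eta)/n$ because $\tilde\mu_\eta$ has bounded density, and integrate the density~\eqref{eq:distribution of (X_1,...,X_n)} over $B^{(n)}:=\prod_{i=1}^n[y_i^{(n)}-\delta_n,y_i^{(n)}+\delta_n]$ with $\delta_n:=n^{-2}$. Standard quantile asymptotics give $\tfrac{1}{n}\sum_i|y_i^{(n)}|^p\to m$, $\tfrac{1}{n}\sum_i\delta_{y_i^{(n)}}\toweak\tilde\mu_\eta$, and $\tfrac{1}{n^2}\sum_{i\ne j}\log|y_i^{(n)}-y_j^{(n)}|\to\iint\log|x-y|\,\tilde\mu_\eta(\dint x)\tilde\mu_\eta(\dint y)$. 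Combined with the uniform approximations of $\sum|x_i|^p$ and of $\log|x_i-x_j|$ on $B^{(n)}$, the volume estimate $(2\delta_n)^n=e^{-o(n^2)}$, and the free-energy asymptotics $n^{-2}\log C_{n,\beta,p}\to B$ from~\eqref{eq:B_const_formula}, this will yield
$$
\liminf_{n\to\infty}\tfrac{1}{n^2}\log\Pro\bigl[X_n\in B^{(n)}\bigr]\;\geq\;\tfrac{\beta}{2}\iint_{\R\times\R}\log|x-y|\,\tilde\mu_\eta(\dint x)\tilde\mu_\eta(\dint y)\;-\;m\;-\;B.
$$

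Finally, I would verify that $B^{(n)}\subseteq\bigl\{(\nu_n,\int_{\R}|x|^p\nu_n(\dint x))\in G\bigr\}$ once $\eta$ is small and $n$ is large: for $x=(x_1,\dots,x_n)\in B^{(n)}$ the empirical measure is within $o(1)$ of $\tilde\mu_\eta$ in any metric for weak convergence and its $p$th moment within $o(1)$ of $m$, while $\tilde\mu_\eta$ itself is arbitrarily weakly close to $\mu$ for $\eta$ small. Sending $\eta\downarrow 0$ and invoking the convergence of the log-energies then completes the proof. The main technical difficulty will be to simultaneously control three scales: $\delta_n$ must be much smaller than the minimum quantile spacing $\sim 1/n$ (to ensure $\prod|x_i-x_j|\geq\prod|y_i^{(n)}-y_j^{(n)}|\,e^{-o(n^2)}$ throughout $B^{(n)}$), yet $(2\delta_n)^n$ must still be $e^{-o(n^2)}$; and the quantile construction must resolve the empirical measure, the moment and the log-energy simultaneously to leading order — both are handled by $\delta_n=n^{-2}$ together with the bounded-density property of $\tilde\mu_\eta$.
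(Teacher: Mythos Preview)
Your proposal is correct and takes a genuinely different route from the paper's own proof.

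The paper realizes the event by placing $n-1$ points in quantile intervals of $\mu$ (on $[a,b]$) and sending a \emph{single} outlier $X_{n,n}$ to the $n$-dependent location $n^{1/p}(m-\int|x|^p\mu(\dint x))^{1/p}$, so that this one point alone contributes the missing $p$-th moment $m-\int|x|^p\mu(\dint x)$. One then bounds $\prod_{i<n}|t_n-t_i|^\beta\geq 1$ trivially and invokes the quantile log-energy estimate only for the $n-1$ remaining points. Your construction instead perturbs the \emph{measure}: a fixed fraction $\eta$ of the points is placed near a fixed location $R(\eta)\sim\eta^{-1/p}$, and you take a double limit $n\to\infty$ followed by $\eta\downarrow 0$. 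Both approaches end with the same quantile-box integration of the joint density and the free-energy limit~\eqref{eq:B_const_formula}; the difference is purely in how the moment excess is carried.

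What each buys: the paper's single-outlier trick avoids the second limit and keeps the log-energy computation entirely on $\mu$, at the price of an $n$-dependent configuration and a separate treatment of the interaction between the outlier and the bulk. Your perturbation keeps all points at $n$-independent locations and treats everything uniformly via the quantiles of $\tilde\mu_\eta$, but requires the additional (easy) verification that the cross-energy $2\eta(1-\eta)\int\int\log|x-y|\mu(\dint x)\rho_{R(\eta)}(\dint y)=O(\eta\log(1/\eta))$ vanishes. Your argument that bounded density of $\tilde\mu_\eta$ yields spacing $\geq c(\eta)/n$, hence $\prod|x_i-x_j|\geq\prod|y_i-y_j|\,e^{-O(n)}$ on $B^{(n)}$ with $\delta_n=n^{-2}$, is correct and is exactly the mechanism used (in slightly different packaging) in the paper's Lemma~\ref{lem:lower bound pair compactly supported}.
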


We may assume that the neighborhood $G$ is of the form
\[
G = \mathcal O_{\varepsilon,d}(\mu)\times \mathcal (m-\delta,m+\delta)
\]
with $\varepsilon>0$, $\delta>0$, $d\in\N$ and
\begin{align*}
\mathcal O_{\varepsilon,d}(\mu) &= \mathcal O_{\varepsilon,f_1,\dots,f_d}(\mu)\\
&=\left\{\nu \in\mathcal M(\R)\,:\, \left| \int_\R f_i(y) \,\mu(\dint y) - \int_\R f_i(y) \,\nu(\dint y)  \right| < \varepsilon\;\;\forall i=1,\dots,d \right\}
\end{align*}
for some  functions $f_1,\dots,f_d\in\mathcal C_b(\R)$. Here and in what follows, $\mathcal C_b(\R)$ denotes the space of bounded continuous functions on $\R$.  We may further assume that $\|f_i\|_\infty\leq 1$ for all $i\in\{1,\dots,d\}$.

Before proceeding to precise statements, let us give a heuristic explanation of what follows. We need a lower bound on the probability of the event
$$
\left\{\nu_n\in \mathcal O_{\varepsilon,d}(\mu), \int_\R |x|^p \nu_n(\dint x) \in (m-\delta,m+\delta)\right\}.
$$
At first sight, the conditions $\nu_n\in \mathcal O_{\varepsilon,d}(\mu)$ and $\int_\R |x|^p \nu_n(\dint x) \in (m-\delta,m+\delta)$ seem to contradict each other for sufficiently small $\eps>0$ and $\delta>0$. Indeed, the first condition states that $\nu_n\approx \mu$, which seems to imply that $\int_\R |x|^p \nu_n(\dint x) \approx \int_\R |x|^p \mu(\dint x)$, which in turn  contradicts the condition $\int_\R |x|^p \nu_n(\dint x) \approx m$.   However, as we already mentioned above, the map $\nu \mapsto \int_\R |x|^p \nu(\dint x)$ is not weakly continuous, so that this argumentation is incorrect.

As we are aiming for a lower bound, we shall provide an explicit description of how the event $\{\nu_n\in \mathcal O_{\varepsilon,d}(\mu), \int_\R |x|^p \nu_n(\dint x) \in (m-\delta,m+\delta)\}$ can be realized.
The next lemma states that this event occurs if the following three events $A_n$, $B_n$ and $C_n$ occur simultaneously:
\begin{itemize}
\item [$A_n$:] The one-leave-out empirical measure
\[
\nu_{n-1}':=\frac{1}{n-1}\sum_{i=1}^{n-1}\delta_{X_{i,n}}
\]
is ``close'' to $\mu$ in the weak topology,
\item [$B_n$:] the $p$th moment of $\nu_{n-1}'$ is ``close'' to the $p$-th moment of $\mu$,
\item [$C_n$:] the last element $X_{n,n}$ is an outlier, which is ``close'' to $$n^{1/p}\Big(m-\int_\R |x|^p \mu(\dint x)\Big).$$
\end{itemize}
The event $C_n$ is crucial because it ensures that $\int_\R |x|^p \nu_n(\dint x)$ is ``close'' to $m$ rather than to the strictly smaller number $\int_\R |x|^p \mu(\dint x)$.
The precise statement is as follows.

\begin{lemma}\label{lem:smaller_event}
Let $\varepsilon,\delta>0$, let $\mu\in\mathcal M(\R)$ and assume that $m\in (0,+\infty)$ is such that
$
m > \int_\R |x|^p \mu(\dint x)
$.
Let us define the interval
\begin{equation}\label{eq:def_D_n}
D_n:= \left(n^{1/p}\left(m-\int_{\R}|y|^p\,\mu(\dint y)\right)^{1/p}-n^{1/p-2},
\;n^{1/p}\left(m-\int_{\R}|y|^p\,\mu(\dint y)\right)^{1/p}\right).
\end{equation}
Then, for all sufficiently large $n\in\N$,
\begin{align*}
& \Pro\left[ \nu_n \in \mathcal O_{\varepsilon,d}(\mu), \int_\R |x|^p \nu_n(\dint x) \in  (m-\delta,m+\delta) \right] \cr
& \geq \Pro\left[\nu'_{n-1}\in \mathcal O_{\eps/3,d}(\mu),\; \left|\int_{\R}|y|^p\,\nu'_{n-1}(\dint y)-\int_{\R}|y|^p\,\mu(\dint y)\right| < \frac{\delta}{3},\; X_{n,n}\in D_n \right].
\end{align*}
\end{lemma}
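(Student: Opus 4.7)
The claim is a deterministic containment of events: whenever a sample point lies in the inner event, it will lie in the outer event for all $n$ sufficiently large. The unifying tool is the convex decomposition
$$
\nu_n = \frac{n-1}{n}\,\nu'_{n-1} + \frac{1}{n}\,\delta_{X_{n,n}},
$$
which I will insert into both requirements defining the outer event and analyze separately.

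For the weak neighborhood condition $\nu_n\in\mathcal O_{\varepsilon,d}(\mu)$, I use that $\|f_i\|_\infty\leq 1$ for each test function. The decomposition gives, for every $i=1,\ldots,d$,
$$
\left|\int_{\R} f_i(x)\,\nu_n(\dint x)-\int_{\R} f_i(x)\,\mu(\dint x)\right|
\leq \frac{n-1}{n}\left|\int_{\R} f_i(x)\,\nu'_{n-1}(\dint x)-\int_{\R} f_i(x)\,\mu(\dint x)\right|+\frac{2}{n}
< \frac{\varepsilon}{3}+\frac{2}{n},
$$
using the hypothesis $\nu'_{n-1}\in\mathcal O_{\varepsilon/3,d}(\mu)$. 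The right-hand side drops below $\varepsilon$ once $n$ is large enough.

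For the $p$th-moment condition, set $M:=\int_{\R}|y|^p\,\mu(\dint y)$ and $a_n:=n^{1/p}(m-M)^{1/p}$; by the hypothesis $m>M$ both $a_n>0$ and $a_n\to+\infty$. Since $a_n/n^{1/p-2}=n^{2}(m-M)^{1/p}\to+\infty$, the assumption $X_{n,n}\in D_n$ forces $X_{n,n}\in(a_n-n^{1/p-2},a_n)\subseteq(0,+\infty)$ for large $n$, whence $|X_{n,n}|^p$ lies between $(a_n-n^{1/p-2})^p$ and $a_n^p=n(m-M)$. A first-order Taylor expansion gives
$$
(a_n-n^{1/p-2})^p = a_n^p\bigl(1+O(n^{-2})\bigr) = n(m-M)+O(n^{-1}),
$$
so $\tfrac1n |X_{n,n}|^p = (m-M)+O(n^{-2})$. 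Combining this with the hypothesis $\bigl|\int_{\R}|y|^p\,\nu'_{n-1}(\dint y)-M\bigr|<\delta/3$, the decomposition delivers
$$
\left|\int_{\R}|y|^p\,\nu_n(\dint y) - m\right| < \frac{\delta}{3}+\frac{M}{n}+O(n^{-2}),
$$
which is below $\delta$ for $n$ large enough, completing the containment.

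Conceptually, no hidden obstacle arises, but the lemma is the technical heart of the lower bound in Theorem~\ref{thm:ldp empirical pair}. Its content is the observation that an isolated outlier at scale $n^{1/p}$ contributes $\Theta(1)$ to the $p$th moment while perturbing the weak topology only by $O(1/n)$; this is exactly the mechanism that lets the empirical pair reach configurations $(\mu,m)$ with $m$ strictly larger than $\int_{\R}|x|^p\,\mu(\dint x)$, which are inaccessible from the naive (weakly continuous) picture underlying the failed application of the contraction principle mentioned in Step 1 of Section~\ref{Sec:strategy}.
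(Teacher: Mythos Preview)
Your proof is correct and follows essentially the same approach as the paper: both use the convex decomposition $\nu_n=\frac{n-1}{n}\nu'_{n-1}+\frac1n\delta_{X_{n,n}}$, bound the weak-neighborhood defect by $\eps/3+O(1/n)$ via $\|f_i\|_\infty\le 1$, and control the $p$th moment by showing $\frac1n|X_{n,n}|^p=(m-M)+O(n^{-2})$ on $D_n$ (the paper does this via the explicit inequality $(1-n^{-2}(m-M)^{-1/p})^p-1\ge -\delta/3$, you via the equivalent Taylor expansion). Your write-up is somewhat more streamlined than the paper's, which splits the moment estimate into a separate sufficiency step and a verification step, but the substance is identical.
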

\begin{proof}
Recall the definition of $\mathcal{O}_{\eps,d}(\mu)$ (in particular the functions $f_1,\ldots,f_d$ involved there) and consider the events
\begin{align}
A_n &:= \left\{\nu'_{n-1}\in \mathcal O_{\eps/3,d}(\mu)\right\} \label{eq:def_A_n},
\\
B_n &:=\left\{\left|\int_{\R}|y|^p\,\nu'_{n-1}(\dint y)-\int_{\R}|y|^p\,\mu(\dint y)\right| < \frac{\delta}{3}\right\},\label{eq:def_B_n}
\\
C_n &:=\left\{X_{n,n}\in D_n \right\}. \label{eq:def_C_n}
\end{align}

\vspace*{2mm}
\noindent
\textit{Step 1.}
We prove that on the event $A_n\cap B_n\cap C_n$ it holds that $\nu_{n}\in \mathcal O_{\varepsilon,d}(\mu)$.
Indeed, for all $i\in\{1,\dots,d\}$, we observe that
\begin{align*}
\left|\int_{\R} f_i(y) \,\nu_n(\dint y) - \int_{\R} f_i(y) \,\mu(\dint y)\right|
& = \left|\frac{1}{n}\sum_{j=1}^n f_i(X_{j,n}) - \int_{\R} f_i(y) \,\mu(\dint y)\right| \cr
& = \left|\frac{1}{n}f_i(X_{n,n}) + \frac{1}{n}\sum_{j=1}^{n-1} f_i(X_{j,n}) - \int_{\R} f_i(y) \,\mu(\dint y)\right| \cr
& = \left|\frac{1}{n}f_i(X_{n,n}) + \frac{n-1}{n}\int_{\R} f_i(y) \,\nu'_{n-1}(\dint y) - \int_{\R} f_i(y) \,\mu(\dint y)\right| \cr
& \leq  \frac{1}{n} + \left| \frac{n-1}{n}\int_{\R} f_i(y) \,\nu'_{n-1}(\dint y) - \int_{\R} f_i(y) \,\mu(\dint y) \right|,
\end{align*}
where in the last estimate we used the triangle inequality and that $\|f_i\|_\infty \leq 1$ for all $i\in\{1,\dots,d\}$.
Using again the triangle inequality (now in the second step), the assumption $\nu'_{n-1}\in \mathcal O_{\eps/3,d}(\mu)$ and the fact that $\|f_i\|_\infty \leq 1$ for all $i\in\{1,\dots,d\}$, we arrive at
\begin{align*}
& \left| \frac{n-1}{n}\int_{\R} f_i(y) \,\nu'_{n-1}(\dint y) - \int_{\R} f_i(y) \,\mu(\dint y) \right| \cr
& {\leq} \left| \frac{n-1}{n}\int_{\R} f_i(y) \,\nu'_{n-1}(\dint y) - \int_{\R} f_i(y) \,\nu'_{n-1}(\dint y) \right|+ \left|\int_{\R} f_i(y) \,\nu'_{n-1}(\dint y) - \int_{\R} f_i(y) \,\mu(\dint y) \right| \cr
& \leq  \frac 1n \left|\int_{\R} f_i(y) \,\nu'_{n-1}(\dint y) \right| + \frac{\varepsilon}{3}
 \leq  \frac 1n + \frac{\varepsilon}{3}.
\end{align*}
Obviously, we may choose $n_0\in\N$ such that for all $n\geq n_0$ we have $\frac{1}{n} \leq \frac \varepsilon 3$. Putting everything together, we obtain for all $n\geq n_0$ and $i\in \{1,\ldots,d\}$ that
\[
\left|\int_{\R} f_i(y) \,\nu_n(\dint y) - \int_{\R} f_i(y) \,\mu(\dint y)\right| < {1\over n}+{1\over n}+{\eps\over 3}=\varepsilon.
\]
In other words, this means that $\nu_n\in \mathcal O_{\varepsilon,d}(\mu)$.

\vspace*{2mm}
\noindent
\textit{Step 2.}
Next we prove that on the event $A_n\cap B_n\cap C_n$, we have $\int_\R |x|^p \nu_n(\dint x) \in(m-\delta,m+\delta)$.
Note that we may write
\[
\int_\R |x|^p \nu_n(\dint x) = \frac{n-1}{n} \int_{\R}|y|^p\,\nu'_{n-1}(\dint y) + \frac{1}{n}|X_{n,n}|^p.
\]
In order to assure that $\int_\R |x|^p \nu_n(\dint x) \in(m-\delta,m+\delta)$
it is therefore enough to have
\begin{align}\label{eq:assumption integral vs |X_n|^p-m}
\left|\int_{\R}|y|^p\,\mu(\dint y)+ \frac{1}{n}|X_{n,n}|^p-m \right| &\leq \frac{\delta}{3}.
\end{align}
Indeed, using the triangle inequality in the second, the definition \eqref{eq:def_B_n} of $B_n$ and \eqref{eq:assumption integral vs |X_n|^p-m} in the third and once again the definition \eqref{eq:def_B_n} of $B_n$ in the fourth step, would imply that
\begin{eqnarray*}
\lefteqn{\left|\int_\R |x|^p \nu_n(\dint x) - m\right|}\\
&=&
\left|\frac{n-1}{n} \int_{\R}|y|^p\,\nu'_{n-1}(\dint y) + \frac{1}{n}|X_{n,n}|^p-m\right| \cr
&{\leq}&
\left|\frac{n-1}{n} \int_{\R}|y|^p\,\nu'_{n-1}(\dint y) -\int_{\R}|y|^p\,\nu'_{n-1}(\dint y)\right| +\left|\int_{\R}|y|^p\,\nu'_{n-1}(\dint y) - \int_{\R}|y|^p \,\mu(\dint y) \right|
\cr
&~& \qquad + \left|\int_{\R}|y|^p \,\mu(\dint y) + \frac{1}{n}|X_{n,n}|^p - m\right| \cr
&{\leq}&
\frac 1n \int_\R|y|^p\,\nu'_{n-1}(\dint y) + \frac{\delta}{3}+\frac{\delta}{3}  \cr
& {\leq}&  \frac 1n \left(\int_\R|y|^p\,\mu(\dint y)+\frac{\delta}{3}\right) + \frac{\delta}{3}+\frac{\delta}{3} \cr
& \leq&  \delta,
\end{eqnarray*}
as we may choose $n_1\in\N$ such that for all $n\geq n_1$,
\[
\frac 1n \left(\int_\R|y|^p\,\mu(\dint y)+\frac{\delta}{3}\right) \leq \frac{\delta}{3}.
\]
It remains to prove that~\eqref{eq:assumption integral vs |X_n|^p-m} holds on the event $A_n\cap B_n\cap C_n$.
By definition \eqref{eq:def_C_n} of the event $C_n$ and the definition \eqref{eq:def_D_n} of the interval $D_n$, we have
$$
\frac{1}{n}|X_{n,n}|^p - \left(m-\int_{\R}|y|^p\,\mu(\dint y)\right) \leq 0.
$$
On the other hand, the same definitions of the event $C_n$ and the interval $D_n$ imply that
\begin{align*}
\lefteqn{\frac{1}{n}|X_{n,n}|^p - \left(m-\int_{\R}|y|^p\,\mu(\dint y)\right)} \cr
&\geq
\left(m-\int_{\R}|y|^p\,\mu(\dint y)\right)\cdot\left[ \left(1 - \frac{1}{n^2}\left(m-\int_{\R}|y|^p\,\mu(\dint y)\right)^{-1/p}\right)^p-1\right] \cr
&\geq
-\frac \delta 3
\end{align*}
for all sufficiently large $n\in\N$.
This concludes the proof of~\eqref{eq:assumption integral vs |X_n|^p-m}.
\end{proof}

We shall now prove the lower bound for the probability of the previously discussed event $A_n\cap B_n\cap C_n$, which is based on a decomposition technique. For now, we restrict ourselves to probability measures which are supported on a compact interval and have continuous density on this interval. As we shall see later, it is actually enough to consider this case.

\begin{lemma}\label{lem:lower bound pair compactly supported}
Let $\mu \in\mathcal M(\R)$ be supported on an interval $[a,b]$ with $-\infty<a<b<+\infty$ and let $m\in (0,+\infty)$ be such that
$m > \int_\R |x|^p \mu(\dint x)$. Assume that $\mu$ has Lebesgue density $h$ that is continuous on $[a,b]$ and satisfies $\inf_{x\in[a,b]} h(x) > 0$. Let $d\in\N$, $f_1,\ldots,f_d\in \mathcal C_b(\R)$ and $\varepsilon,\delta>0$.  Then
\begin{align*}
\liminf_{n\to\infty} \frac{1}{n^2}\log &\,\Pro \left[\nu'_{n-1}\in\mathcal O_{\varepsilon,d}(\mu), \;\left|\int_{\R}|y|^p\,\nu'_{n-1}(\dint y)-\int_{\R}|y|^p\,\mu(\dint y)\right| < \delta,
\; X_{n,n}\in D_n \right] \\
& \geq \frac \beta 2 \int_{\R}\int_{\R} \log|x-y| \,\mu(\dint x)\, \mu(\dint y) -B-m.
\end{align*}
\end{lemma}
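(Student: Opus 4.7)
The plan is to use the explicit joint density~\eqref{eq:distribution of (X_1,...,X_n)} of $X_n$, construct a deterministic ``good'' configuration for the first $n-1$ coordinates together with an outlier location for the last, and restrict the integral to small neighborhoods of this configuration. Let $F$ be the distribution function of $\mu$ and set $y_i:=F^{-1}(i/n)$ for $i=1,\ldots,n-1$. Since $h$ is continuous with $h_{\min}:=\inf_{[a,b]}h>0$ and $h_{\max}:=\sup_{[a,b]}h<\infty$, one obtains $\tfrac{1}{n h_{\max}}\le y_{i+1}-y_i\le \tfrac{1}{n h_{\min}}$. Fix $\rho_n:=n^{-3}$, put $I_i:=[y_i-\rho_n,y_i+\rho_n]$ and note that these intervals are pairwise disjoint subsets of $[a,b]$ for all $n$ large. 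Consider the sub-event
\begin{align*}
E_n:=\bigl\{X_{i,n}\in I_i\text{ for all }i=1,\ldots,n-1\text{ and }X_{n,n}\in D_n\bigr\}.
\end{align*}
A routine Riemann-sum check shows that on $E_n$ the empirical measure $\nu'_{n-1}$ is an $O(\rho_n)$-perturbation of $\tfrac{1}{n-1}\sum_{i=1}^{n-1}\delta_{y_i}$, which converges weakly to $\mu$ and whose $p$-th moment converges to $\int_\R|y|^p\mu(\dint y)$; hence for $n$ large enough $\nu'_{n-1}\in\mathcal O_{\eps,d}(\mu)$ and $|\int_\R|y|^p\nu'_{n-1}(\dint y)-\int_\R|y|^p\mu(\dint y)|<\delta$. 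Therefore it suffices to bound $\Pro[E_n]$ from below.

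\textbf{Factor-by-factor asymptotics.} Using the density~\eqref{eq:distribution of (X_1,...,X_n)},
\begin{align*}
\Pro[E_n]=\frac{1}{C_{n,\beta,p}}\int_{I_1\times\cdots\times I_{n-1}\times D_n} e^{-n\sum_{i=1}^n|x_i|^p}\prod_{1\le i<j\le n}|x_i-x_j|^\beta\,\dint x_1\cdots \dint x_n.
\end{align*}
I control each ingredient on the domain of integration as follows. (a)~The volume $(2\rho_n)^{n-1}\cdot n^{1/p-2}$ has logarithm $O(n\log n)=o(n^2)$. (b)~By the Lipschitz bound on $|\cdot|^p$ on $[a,b]$ and $\tfrac{1}{n-1}\sum|y_i|^p\to\int_\R|y|^p\mu(\dint y)$, one has $-n\sum_{i<n}|x_i|^p=-n^2\int_\R|y|^p\mu(\dint y)+o(n^2)$. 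The defining bounds of $D_n$ yield $|x_n|^p=n(m-\int_\R|y|^p\mu(\dint y))+o(n)$, whence $-n|x_n|^p=-n^2 m+n^2\int_\R|y|^p\mu(\dint y)+o(n^2)$, and the two potential contributions combine into the clean cancellation $-n\sum_{i=1}^n|x_i|^p=-n^2 m+o(n^2)$. (c)~Since $x_n\asymp n^{1/p}c$ with $c:=(m-\int_\R|y|^p\mu(\dint y))^{1/p}>0$ while $|x_i|\le\max(|a|,|b|)$ for $i<n$,
\begin{align*}
\sum_{i=1}^{n-1}\log|x_n-x_i|=(n-1)\log x_n+\sum_{i<n}\log\bigl|1-\tfrac{x_i}{x_n}\bigr|=O(n\log n)=o(n^2),
\end{align*}
so the outlier-bulk repulsion is negligible at the scale $n^2$.

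\textbf{Bulk-bulk interaction and conclusion.} The only delicate step is the bulk-bulk repulsion $\beta\sum_{1\le i<j\le n-1}\log|x_i-x_j|$. Since $|x_i-x_j|\ge|y_i-y_j|-2\rho_n$ and $|y_i-y_j|\ge|i-j|/(nh_{\max})$, the logarithmic correction is uniformly of order $\rho_n n/|i-j|$, so summed over all pairs it is $o(1)$; hence one may replace each $x_i$ by $y_i$ at negligible cost. The classical Riemann-sum estimate (see, e.g., \cite{HP2000,hiai_petz}) then gives
\begin{align*}
\frac{1}{(n-1)^2}\sum_{1\le i<j\le n-1}\log|y_i-y_j|\longrightarrow \frac12\int_\R\int_\R\log|x-y|\,\mu(\dint x)\mu(\dint y),
\end{align*}
the logarithmic singularity on the diagonal being tamed by $|y_i-y_j|\ge|i-j|/(nh_{\max})$. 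Assembling all contributions, dividing by $n^2$ and invoking $\tfrac{1}{n^2}\log C_{n,\beta,p}\to B$ from~\eqref{eq:B_const_formula}, I obtain
\begin{align*}
\liminf_{n\to\infty}\frac{1}{n^2}\log\Pro[E_n]\ge\frac{\beta}{2}\int_\R\int_\R\log|x-y|\,\mu(\dint x)\mu(\dint y)-B-m,
\end{align*}
which is the asserted bound. The main obstacle is the Riemann-sum convergence of the logarithmic interaction energy with an $o(n^2)$ error; the compactness of the support together with the continuity and strict positivity of the density $h$ is precisely what makes this step tractable, while everything else reduces to careful order tracking and the outlier ansatz built into the set $D_n$.
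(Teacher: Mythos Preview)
Your proof is correct and follows essentially the same approach as the paper's: both restrict the integral to a product of small quantile-based intervals for the bulk coordinates (the paper uses half-quantile intervals $[a_k^{(n-1)},b_k^{(n-1)}]$ of width $\sim 1/n$, you use radius-$n^{-3}$ neighborhoods of $F^{-1}(i/n)$) times $D_n$ for the outlier, then bound the potential, the bulk--bulk repulsion via a logarithmic Riemann sum, and the outlier--bulk term by positivity/negligibility. One cosmetic remark: your ``Lipschitz bound on $|\cdot|^p$ on $[a,b]$'' should read ``uniform continuity'' to cover the case $0<p<1$ with $0\in[a,b]$, but this does not affect the argument.
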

\begin{proof}
We use a classical approach similar to the proof of Lemma 5.4.6 in~\cite[pp.~216--217]{hiai_petz}, but the ``outlier'' $X_{n,n}$ needs to be  treated differently.
Denote by $g:[0,1]\to[a,b]$ the inverse function of the continuous, strictly monotone function
\[
t\mapsto \int_a^th(x)\,\dint x, \quad t\in [a,b].
\]
Note that $g(0)=a$ and $g(1)=b$.  For each $k\in\{1,\dots,n-1\}$, we define
\[
a_k^{(n-1)} := g\left(\frac{2k-1}{2(n-1)}\right)\quad\text{and}\quad b_k^{(n-1)}:=g\left(\frac{2k}{2(n-1)}\right).
\]
Note that
\[
a = b_0^{(n-1)} < a_1^{(n-1)} < b_1^{(n-1)} < a_2^{(n-1)} < b_2^{(n-1)} < \ldots < a_{n-1}^{(n-1)} < b_{n-1}^{(n-1)}=b.
\]
This way we obtain a decomposition of the interval $[a,b]$ into $2(n-1)$ intervals. Let
\[
\Delta_{n-1} := \left\{(t_1,\dots,t_{n-1})\in\R^{n-1}\,:\, a_{k}^{(n-1)} \leq t_k \leq b_{k}^{(n-1)}\quad \text{for all $k=1,\dots,n-1$} \right\}.
\]
For sufficiently large $n\in\N$, we have that, for all $t=(t_1,\ldots,t_n)\in\Delta_{n-1}$,
\[
\frac{1}{n-1}\sum_{k=1}^{n-1}\delta_{t_k} \in \mathcal O_{\varepsilon,d}(\mu)\quad\text{and}\quad \left|\frac{1}{n-1}\sum_{k=1}^{n-1}|t_k|^p - \int_{\R}|x|^p\,\mu(\dint x) \right|<\delta.
\]
Therefore, recalling from~\eqref{eq:distribution of (X_1,...,X_n)} the density of the random vector $X_n$, we can write
\begin{align*}
& \Pro \left[\nu'_{n-1}\in\mathcal O_{\varepsilon,d}(\mu),\; \left|\int_{\R}|y|^p\,\nu'_{n-1}(\dint y)-\int_{\R}|y|^p\,\mu(\dint y)\right| < \delta, \; X_{n,n}\in D_n \right] \cr
& \geq \frac{1}{C_{n,\beta,p}} \int_{\Delta_{n-1}\times D_n} e^{-n\sum_{k=1}^n|t_k|^p}\prod_{1\leq i<j\leq n} |t_j-t_i|^{\beta} \,\dint(t_1,\ldots,t_n) \cr
& = \frac{1}{C_{n,\beta,p}} \int_{\Delta_{n-1}} \int_{D_n} e^{-n|t_n|^p}e^{-n\sum_{k=1}^{n-1}|t_k|^{p}} \prod_{1\leq i<j<n} |t_j-t_i|^{\beta} \prod_{1\leq i< n} |t_n-t_i|^{\beta} \,\dint t_n\dint(t_1,\ldots,t_{n-1}).
\end{align*}
Now we define, for each $k\in\{1,\dots,n-1\}$,
\[
\xi_k^{(n-1)}:= \max\left\{|t|^p \,:\, t\in\left[a_k^{(n-1)},b_k^{(n-1)}\right] \right\}.
\]
Then
\begin{align*}
& \Pro \left[\nu'_{n-1}\in\mathcal O_{\varepsilon,d}(\mu),\; \left|\int_{\R}|y|^p\,\nu'_{n-1}(\dint y)-\int_{\R}|y|^p\,\mu(\dint y)\right| < \delta, \; X_{n,n}\in D_n \right] \cr
& \geq  \frac{1}{C_{n,\beta,p}} \int_{\Delta_{n-1}} \int_{D_n} e^{-n|t_n|^p}e^{-n\sum_{k=1}^{n-1}|\xi^{(n-1)}_k|^{p}} \prod_{1\leq i<j<n} |a_j^{(n-1)}-b_i^{(n-1)}|^{\beta}\\
&\hspace{5cm}\times \prod_{1\leq i< n} |t_n-t_i|^{\beta} \,\dint t_n\dint(t_1,\ldots,t_{n-1})\cr
& \geq \frac{1}{C_{n,\beta,p}}e^{-n\sum_{k=1}^{n-1}|\xi^{(n-1)}_k|^{p}} \prod_{1\leq i<j<n} |a_j^{(n-1)}-b_i^{(n-1)}|^{\beta} e^{-n^2(m-\int_{\R}|x|^p\,\mu(\dint x))}\vol_n(\Delta_{n-1}\times D_n),
\end{align*}
where we used that on $\Delta_{n-1}\times D_n$ we have $|t_n|^p < n (m-\int_{\R}|x|^p\,\mu(\dint x))$ by~\eqref{eq:def_D_n}, and
\begin{equation}\label{eq:prod_t_n_diff}
\prod_{1\leq i< n} |t_n-t_i|^{\beta} \geq 1
\end{equation}
for sufficiently large $n\in\N$. Indeed, on $\Delta_{n-1}\times D_n$ we have $t_1,\ldots,t_{n-1} \in [a,b]$, whereas $t_n > n^{1/p} (m-\int_{\R}|y|^p\,\mu(\dint y))^{1/p}-n^{1/p-2}$ by~\eqref{eq:def_D_n}, which exceeds $b+1$ for sufficiently large $n$. This proves~\eqref{eq:prod_t_n_diff}.
Observe also that
\[
\vol_n(\Delta_{n-1}\times D_n) \geq n^{1/p-2}  \prod_{k=1}^{n-1}(b_{k}^{(n-1)}- a_{k}^{(n-1)})  \geq \left(\frac1 {(2n-2){\sup_{x\in [a,b]} h(x)}}\right)^{n-1}n^{1/p-2},
\]
which implies 
$$
\liminf_{n\to\infty} \frac 1 {n^2} \log \vol_n(\Delta_{n-1}\times D_n) \geq 0.
$$
 Recall also that $\lim_{n\to\infty} \frac1 {n^2}\log C_{n,\beta,p}=B$.
Using that
\begin{align*}
&\lim_{n\to\infty} \frac 1n \sum_{k=1}^{n-1}|\xi^{(n-1)}_k|^{p} = \int_{\R} |x|^p \mu(\dint x)
\end{align*}
and
\begin{align*}
&\liminf_{n\to\infty} \frac 1{n^2} \sum_{1\leq i<j<n} \log |a_j^{(n-1)}-b_i^{(n-1)}| \geq \frac 12
\int_{\R}\int_{\R} \log|x-y| \,\mu(\dint x)\, \mu(\dint y),
\end{align*}
we obtain
\begin{align*}
& \liminf_{n\to\infty} \frac{1}{n^2}\log \Pro \left[\nu'_{n-1}\in\mathcal O_{\varepsilon,d}(\mu), \;
\left|\int_{\R}|y|^p\,\nu'_{n-1}(\dint y)-\int_{\R}|y|^p\,\mu(\dint y)\right| < \delta,
\; X_{n,n}\in D_n \right] \\
& \geq -B-m + \frac \beta 2 \int_{\R}\int_{\R} \log|x-y| \,\mu(\dint x)\, \mu(\dint y),
\end{align*}
which is the required estimate.
\end{proof}

Next we shall lift the previous lemma to arbitrary measures $\mu\in\mathcal M(\R)$ and prove Lemma~\ref{lem:lower_bound} stating the lower bound in the weak LDP for the pair $\left(\nu_n, \int_\R |x|^p \nu_n(\dint x)\right)$.

\begin{lemma}
Let $\mu\in\mathcal M(\R)$ and $m\in (0,+\infty)$ be such that $m\geq\int_\R |x|^p \mu(\dint x)$.
Let $\varepsilon,\delta>0$ and $d\in\N$, $f_1,\ldots,f_d\in \mathcal C_b(\R)$. Then
\begin{align*}
\liminf_{n\to\infty} \frac{1}{n^2} \log &\,\Pro \left[\nu_{n}\in\mathcal O_{\varepsilon,d}(\mu), \int_{\R}|y|^p\,\nu_{n}(\dint y)\in (m - \delta, m+\delta) \right] \\
& \geq \frac \beta 2 \int_{\R}\int_{\R} \log|x-y| \,\mu(\dint x)\, \mu(\dint y) -B-m.
\end{align*}
\end{lemma}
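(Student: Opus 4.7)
The plan is to upgrade Lemma 4.4 from the regularized case to arbitrary $(\mu,m)$ by an approximation argument. If the logarithmic energy $\int_{\R}\int_{\R} \log|x-y|\,\mu(\dint x)\,\mu(\dint y)$ equals $-\infty$, the right-hand side is $-\infty$ and the bound is trivial, so I assume henceforth that this energy is finite; since also $\int_{\R} |x|^p\,\mu(\dint x) \leq m < +\infty$, both terms on the right-hand side are then finite.

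The first step is to build an approximating sequence $(\mu_k)_{k\in\N}$ in $\mathcal M(\R)$ such that (a) $\mu_k \toweak \mu$, (b) $\int_{\R}|x|^p\,\mu_k(\dint x) \to \int_{\R}|x|^p\,\mu(\dint x)$, (c) $\liminf_{k\to\infty} \int_{\R}\int_{\R} \log|x-y|\,\mu_k(\dint x)\,\mu_k(\dint y) \geq \int_{\R}\int_{\R} \log|x-y|\,\mu(\dint x)\,\mu(\dint y)$, and (d) each $\mu_k$ is supported on a compact interval $[a_k,b_k]$ with a continuous density that is bounded below by a positive constant on $[a_k,b_k]$. A standard recipe is to first truncate and renormalize $\mu$ on $[-K_k,K_k]$ with $K_k\to\infty$, then convolve the result with an approximate identity (say the uniform law on $[-1/k,1/k]$), and finally form a convex combination with a tiny uniform measure on $[-K_k-1,K_k+1]$ of mass $1/k$ to force a strictly positive density. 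Properties (a) and (b) are routine under $\int_{\R}|x|^p\,\mu(\dint x)<+\infty$, while (c) rests on the standard continuity/semi-continuity properties of the logarithmic energy under mollification, in the spirit of \cite[Chapter~5]{hiai_petz}.

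The second step is a neighborhood comparison. Given the target neighborhood $\mathcal O_{\varepsilon,d}(\mu)\times(m-\delta,m+\delta)$, for all sufficiently large $k$ the weak convergence $\mu_k \toweak \mu$ gives $\mu_k \in \mathcal O_{\varepsilon/2,d}(\mu)$, hence by the triangle inequality $\mathcal O_{\varepsilon/2,d}(\mu_k)\subset\mathcal O_{\varepsilon,d}(\mu)$. In the strict case $m > \int_{\R}|x|^p\,\mu(\dint x)$ I set $m_k:=m$; property (b) yields $m_k > \int_{\R}|x|^p\,\mu_k(\dint x)$ for $k$ large, and $(m_k-\delta/2, m_k+\delta/2)\subset(m-\delta,m+\delta)$ trivially. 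In the boundary case $m=\int_{\R}|x|^p\,\mu(\dint x)$ I choose instead $m_k:=\int_{\R}|x|^p\,\mu_k(\dint x)+\delta/4$, which lies in $(m-\delta/2,m+\delta/2)$ for $k$ large by (b). In either case Lemma~4.4 applied to $(\mu_k,m_k)$ with parameters $\varepsilon/2$ and $\delta/2$ yields
\[
\liminf_{n\to\infty}\frac{1}{n^2}\log\Pro\left[\nu_n\in\mathcal O_{\varepsilon,d}(\mu),\, \int_{\R}|y|^p\,\nu_n(\dint y)\in(m-\delta,m+\delta)\right] \geq \frac{\beta}{2}\, E(\mu_k) - B - m_k,
\]
where $E(\mu_k):=\int_{\R}\int_{\R}\log|x-y|\,\mu_k(\dint x)\,\mu_k(\dint y)$.

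Finally, letting $k\to\infty$ and invoking (b) together with (c), the right-hand side is bounded below by $\tfrac{\beta}{2}\int_{\R}\int_{\R}\log|x-y|\,\mu(\dint x)\,\mu(\dint y) - B - m$, which is the desired estimate. The main obstacle is the construction of $(\mu_k)$ satisfying (c) simultaneously with the regularity demanded in (d): the logarithmic energy is not continuous on $\mathcal M(\R)$ in general, so one must rely on the specific positive properties of the mollification scheme (and in particular on the fact that mollifying a measure whose logarithmic energy is finite produces a sequence whose energies do not drop below that of the limit). Once such a sequence is in hand, the remainder of the argument amounts to bookkeeping of shrinking neighborhoods and a direct appeal to Lemma~4.4.
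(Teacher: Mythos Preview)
Your approach is correct and mirrors the paper's: approximate $\mu$ by compactly supported $\mu_k$ with positive continuous densities (the paper simply cites \cite[Lemma~5.4.6]{hiai_petz} for such a sequence rather than building it by hand), choose $m_k\to m$ with $m_k>\int|x|^p\,\mu_k(\dint x)$, shrink the neighborhood, and pass to the limit via the energy inequality you label (c). One small point to tighten: Lemma~4.4 bounds the probability of the event involving $\nu'_{n-1}$ and $X_{n,n}\in D_n$, not the $\nu_n$-event directly, so before invoking it you must first apply Lemma~4.3 to $(\mu_k,m_k)$ to pass from $\{\nu_n\in\mathcal O_{\varepsilon/2,d}(\mu_k),\,\int|y|^p\nu_n\in(m_k-\delta/2,m_k+\delta/2)\}$ to the $\nu'_{n-1}$-event---the paper does this step explicitly.
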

\begin{proof}
Following the argumentation of \cite[Lemma 5.4.6]{hiai_petz} there is a sequence $(\mu_k)_{k\in\N}$ of probability measures on $\R$ such that, for all $k\in\N$,
\begin{itemize}
\item[(i)] $\mu_k$ is supported on some interval $[a_k,b_k]$ and has continuous Lebesgue density $h_k$ on this interval such that $\inf_{x\in[a_k,b_k]}h_k(x)>0$,
\item[(ii)] $\mu_k$ converges to $\mu$ weakly, as $k\to\infty$,
\item[(iii)] for $k\to\infty$,
\[
\int_{\R}|x|^p\,\mu_k(\dint x) \to \int_{\R}|x|^p \,\mu(\dint x).
\]
\end{itemize}
As a consequence of (ii) and (iii), for all sufficiently large $k\in\N$,
\[
\mathcal O_{\eps/3,d}(\mu_k) \subseteq \mathcal O_{\varepsilon,d}(\mu).
\]
By (iii), we can construct a sequence $(m_k)_{k\in\N}$ such that $\lim_{k\to\infty} m_k = m$ and $m_k > \int_{\R}|x|^p\,\mu_k(\dint x)$ for all $k\in\N$. Indeed, we may simply take
\[
m_k := m - \Big(\int_{\R}|x|^p\,\mu(\dint x)-\int_{\R}|x|^p\,\mu_k(\dint x)\Big)+\frac{1}{k},\qquad k\in \N.
\]  
Therefore, for large enough $k\in\N$,
\begin{align*}
& \Pro \left[\nu_{n}\in\mathcal O_{\varepsilon,d}(\mu), \int_{\R}|y|^p\,\nu_{n}(\dint y)\in (m - \delta, m+\delta) \right] \\
&\geq \Pro \left[\nu_{n}\in\mathcal O_{\varepsilon/3,d}(\mu_k), \int_{\R}|y|^p\,\nu_{n}(\dint y)\in \left(m_k - \frac \delta2, m_k+\frac \delta2\right) \right]\\
& \geq \Pro \left[\nu'_{n-1}\in\mathcal O_{\eps/9,d}(\mu_k), \; \left|\int_{\R}|y|^p\,\nu'_{n-1}(\dint y)-\int_{\R}|y|^p\,\mu_k(\dint y)\right| < \frac{\delta}{6},\; X_{n,n}\in D_{n,k} \right],
\end{align*}
where the last inequality follows from Lemma~\ref{lem:smaller_event} and $D_{n,k}$ is defined in the same way as $D_n$, see~\eqref{eq:def_D_n},  but with $m$ replaced by $m_k$.
We can now apply Lemma \ref{lem:lower bound pair compactly supported} with $\mu$ replaced by $\mu_k$ there and obtain, for every sufficiently large fixed $k\in\N$,
\[
\liminf_{n\to\infty} \frac{1}{n^2} \log \Pro \left[\nu_{n}\in\mathcal O_{\varepsilon,d}(\mu), \int_{\R}|y|^p\,\nu_{n}(\dint y)\in (m - \delta, m+\delta)\right] \geq -\mathscr I_1(\mu_k,m_k).
\]
It is now left to show that
\[
\limsup_{k\to\infty} \mathscr I_1(\mu_k,m_k) \leq \mathscr I_1(\mu,m).
\]
But this follows from the upper semi-continuity of the free entropy in the following form: under conditions (ii) and (iii) we have
\[
\limsup_{k\to\infty} \int_{\R}\int_{\R}\log|x-y|\,\mu_k(\dint x)\,\mu_k(\dint y) \leq \int_{\R}\int_{\R}\log|x-y|\,\mu(\dint x)\,\mu(\dint y).
\]
This standard fact can be verified by the same argument as on p.~214 of~\cite{hiai_petz}.
\end{proof}

\subsubsection{The upper bound}
Again we recall that $m \geq \int_{\R}|x|^p\,\mu(\dint x)$.
To obtain the upper bound, we follow a classical idea and consider an appropriate kernel function together with its truncated version (see, e.g., \cite{hiai_petz_wishart,HP2000,hiai_petz}).

\begin{lemma}\label{lem:upper_bound}
Let $\mu\in\mathcal M(\R)$ and $m\in (0,+\infty)$ be such that $m\geq\int_\R |x|^p \mu(\dint x)$.
Let $\varepsilon,\delta>0$ and $d\in\N$, $f_1,\ldots,f_d\in \mathcal C_b(\R)$. Then
\begin{align*}
\lim_{\varepsilon\downarrow 0, \delta\downarrow 0}
\limsup_{n\to\infty}
\frac{1}{n^2} \log &\Pro \left[\nu_{n}\in\mathcal O_{\varepsilon,d}(\mu), \int_{\R} |y|^p\,\nu_{n} (\dint y) \in (m-\delta, m+\delta)\right] \\
&\leq \frac \beta 2 \int_{\R}\int_{\R} \log|x-y| \,\mu(\dint x)\, \mu(\dint y)-B-m.
\end{align*}
\end{lemma}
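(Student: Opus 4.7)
The plan is to estimate the probability pointwise from the joint density~\eqref{eq:distribution of (X_1,...,X_n)}. Writing $A_n$ for the event $\{\nu_n \in \mathcal O_{\varepsilon, d}(\mu),\ \int_\R|y|^p\,\nu_n(\dint y) \in (m-\delta, m+\delta)\}$, we have
\[
\Pro[A_n] \;=\; \frac{1}{C_{n,\beta,p}} \int_{A_n} \exp\Bigl(-n\sum_{i=1}^n |x_i|^p + \beta\sum_{i<j}\log|x_i-x_j|\Bigr)\,\dint x_1 \cdots \dint x_n.
\]
On $A_n$ the potential satisfies $\sum_i |x_i|^p \geq n(m-\delta)$, giving $e^{-n\sum_i |x_i|^p} \leq e^{-n^2(m-\delta)}$ and, after letting $\delta \downarrow 0$, the $-m$ contribution to the rate; the $-B$ term comes from $\tfrac{1}{n^2}\log C_{n,\beta,p} \to B$ via~\eqref{eq:B_const_formula}. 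Everything therefore reduces to bounding the Vandermonde contribution from above by something driven by $\iint \log|x-y|\,\mu(\dint x)\mu(\dint y)$.

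For this I would use the doubly-truncated kernel $\Phi_{L,M}(x,y) := \max\bigl(\min(\log|x-y|,L),-M\bigr)$, which is continuous and bounded on $\R^2$, together with the pointwise inequality $\log|x-y| \leq \Phi_{L,M}(x,y) + (\log|x-y|-L)_+$. Summing over pairs and using $\Phi_{L,M}(x_i,x_i)=-M$ yields
\[
\beta\sum_{i<j}\log|x_i-x_j| \;\leq\; \frac{\beta n^2}{2} \iint \Phi_{L,M}(x,y)\,\nu_n(\dint x)\nu_n(\dint y) + \frac{\beta n M}{2} + \beta\sum_{i<j}(\log|x_i-x_j|-L)_+,
\]
and the middle $O(n)$ term is negligible at speed $n^2$. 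The last term I would control via the inequality $\log|x-y| \leq \tfrac{1}{2}\log 2 + \tfrac{1}{2}\log(1+x^2) + \tfrac{1}{2}\log(1+y^2)$ combined with the elementary fact that $\log(1+x^2) = o(|x|^p)$ as $|x|\to\infty$: the moment bound $\tfrac{1}{n}\sum_i|x_i|^p \leq m+\delta$ on $A_n$ then forces
\[
\frac{1}{n^2}\sum_{i<j}(\log|x_i-x_j|-L)_+ \;\leq\; \eta(L)(m+\delta)
\]
for some function $\eta$ with $\eta(L) \downarrow 0$ as $L\to\infty$.

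Since $\Phi_{L,M}$ is bounded continuous on $\R^2$, the functional $\nu \mapsto \iint \Phi_{L,M}(x,y)\,\nu(\dint x)\nu(\dint y)$ is continuous on $\mathcal M(\R)$ in the weak topology. Enlarging the set $\{f_1,\ldots,f_d\}$ of test functions if necessary (which only tightens the bound to be proved), the constraint $\nu_n \in \mathcal O_{\varepsilon,d}(\mu)$ forces $\iint \Phi_{L,M}\,\nu_n(\dint x)\nu_n(\dint y) \to \iint \Phi_{L,M}\,\mu(\dint x)\mu(\dint y)$ as $\varepsilon\downarrow 0$. Assembling everything, taking $\limsup_n$ followed by $\varepsilon,\delta\downarrow 0$, and finally letting $M\to\infty$ and $L\to\infty$ with monotone convergence (since $\Phi_{L,M}\downarrow\min(\log|x-y|,L)$ and $\min(\log|x-y|,L)\uparrow\log|x-y|$), we recover $\tfrac{\beta}{2}\iint\log|x-y|\,\mu(\dint x)\mu(\dint y) - B - m$. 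The central difficulty lies in the interplay between the weak-topology control (through only finitely many test functions) and the genuine singularities of the log-kernel, both at the diagonal and at infinity; the double truncation isolates these two singularities, and the $p$-th moment budget is exactly what ensures the truncation error $\eta(L)$ vanishes in the $L\to\infty$ limit.
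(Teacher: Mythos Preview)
Your argument is sound and reaches the same bound, but the mechanism for taming the singularity of $\log|x-y|$ at infinity differs from the paper's. The paper builds the $p$-th moment directly into the kernel via $F(x,y;\gamma) = -\tfrac{\beta}{2}\log|x-y| + \tfrac{\gamma}{2}(|x|^p+|y|^p)$ for an auxiliary $\gamma>0$, then truncates only from above at level $\alpha$; since $F(\cdot,\cdot;\gamma)$ is automatically bounded below, one truncation suffices, and the algebraic identity $-n\sum_i|t_i|^p + \beta\sum_{i<j}\log|t_i-t_j| = -(n+\gamma-\gamma n)\sum_i|t_i|^p - 2\sum_{i<j}F(t_i,t_j;\gamma)$ absorbs the moment constraint in one stroke (final limits: $\gamma\downarrow 0$, then $\alpha\to\infty$). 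Your double truncation $\Phi_{L,M}$ keeps the moment control external: the large-$|x-y|$ leftover $(\log|x-y|-L)_+$ is handled separately through $\log|x-y|\leq\tfrac12\log 2+\tfrac12\log(1+x^2)+\tfrac12\log(1+y^2)$ together with $\log(1+x^2)=o(|x|^p)$ and the moment budget on $A_n$. Both routes work; the paper's is algebraically slicker, while yours makes the role of the $p$-th moment constraint more explicit.

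One caveat on your parenthetical ``which only tightens the bound to be proved'': enlarging $\{f_1,\ldots,f_d\}$ shrinks $\mathcal O_{\varepsilon,d}(\mu)$ and hence the probability, so it \emph{weakens} rather than strengthens the inequality. For a fixed finite list of test functions, $\mathcal O_{\varepsilon,d}(\mu)$ does not shrink to $\{\mu\}$ as $\varepsilon\downarrow 0$, so the lemma as literally stated fails for degenerate choices of $f_i$. What is actually needed---and what Proposition~\ref{prop:basis topology} uses---is the infimum over the full neighborhood base, which does allow you to choose the $f_i$. The paper's own proof glosses over exactly the same point when it passes from $\inf_{\mu'\in\mathcal O_{\varepsilon,d}(\mu)}\iint F_\alpha\,\mu'\otimes\mu'$ to $\iint F_\alpha\,\mu\otimes\mu$ as $\varepsilon\to 0$; both arguments are correct for the weak LDP, just not for the lemma read verbatim. (A second point shared with the paper: after bounding the integrand pointwise you still owe a factor $\mathrm{vol}(A_n)$; this is harmless since $A_n\subset\{\sum_i|x_i|^p<n(m+\delta)\}$ has volume $e^{o(n^2)}$, but it should be said.)
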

\begin{proof}
For some $\gamma>0$ define the following weighted logarithmic kernel
\[
F(x,y; \gamma) := 
\begin{cases}
- \frac \beta 2 \log|x-y| + \gamma\frac{|x|^p+|y|^p}{2} &: x\neq y \\
+\infty &: x=y,
\end{cases}
\]
and for $\alpha>0$ its $\alpha$-truncated version by
\[
F_{\alpha}(x,y;\gamma) := \min\{F(x,y;\gamma),\alpha \}.
\]
We observe that $F_\alpha(x,x; \gamma)=\alpha$. For $t=(t_1,\ldots,t_n)\in\R^n$, let us write $\mu_t= \frac 1n \sum_{i=1}^n \delta_{t_i}$ for the empirical measure of $t$ and denote by $H$ the set
\[
H := \left\{t\in\R^n\,:\, \mu_t \in\mathcal O_{\varepsilon,d}(\mu),\,\frac{1}{n}\sum_{i=1}^n|t_i|^p \in(m-\delta,m+\delta) \right\},
\]
where we recall that the definition of $\mathcal O_{\varepsilon,d}(\mu)$ depends on the functions $f_1,\ldots,f_d$.
We have
\begin{align*}
\lefteqn{\Pro\left[\nu_{n}\in\mathcal O_{\varepsilon,d}(\mu), \int_{\R}|y|^p\,\nu_{n}(\dint y) \in (m-\delta, m+\delta)\right]}\\
&=
\frac{1}{C_{n,\beta,p}} \int_{H}
\exp\left\{- n \sum_{i=1}^n |t_i|^p\right\}
\exp\left\{\beta \sum_{1\leq i <j \leq n} \log |t_j-t_i|\right\}\,\dint(t_1, \ldots , t_n) \\
& =
\frac{1}{C_{n,\beta,p}} \int_{H} \exp\left\{-\sum_{i=1}^n |t_i|^p(n+\gamma-\gamma n)\right\}
\exp\left\{-2\sum_{1\leq i<j\leq n}F(t_i,t_j;\gamma)\right\}\,\dint(t_1, \ldots, t_n) \cr
& \leq
\frac{1}{C_{n,\beta,p}} \int_{H} e^{(m-\delta)n(\gamma n -\gamma - n)}
\exp\left\{-n^2\int_{\R}\int_{\R} F_\alpha(x,y;\gamma) \,\mu_t(\dint x)\,\mu_t(\dint y) + n\alpha\right\} \,\dint(t_1, \ldots, t_n) \cr
& \leq
\frac{1}{C_{n,\beta,p}}e^{(m-\delta)n^2(\gamma -1)-(m-\delta)n\gamma}\exp\left\{-n^2\inf\limits_{\mu'\in\mathcal O_{\varepsilon,d}(\mu)}\int_{\R}\int_{\R} F_\alpha(x,y;\gamma)\,\mu'(\dint x)\,\mu'(\dint y) + n\alpha\right\},
\end{align*}
where in the second step we used that
\begin{align*}
-n\sum_{i=1}^n|t_i|^p&+\beta\sum_{1\leq i<j\leq n}\log|t_i-t_j|\\
&=-n\sum_{i=1}^n|t_i|^p-2\sum_{1\leq i<j\leq n}F(t_i,t_j;\gamma)+\gamma\sum_{1\leq i<j\leq n}(|t_i|^p+|t_j|^p)\\
&=-n\sum_{i=1}^n|t_i|^p-2\sum_{1\leq i<j\leq n}F(t_i,t_j;\gamma)+\gamma\sum_{i=1}^{n-1}\Big((n-i)|t_i|^p+\sum_{j=i+1}^n|t_j|^p\Big)\\
&=-n\sum_{i=1}^n|t_i|^p-2\sum_{1\leq i<j\leq n}F(t_i,t_j;\gamma)+\gamma(n-1)\sum_{i=1}^n|t_i|^p\\
&=-\sum_{i=1}^n|t_i|^p(n+\gamma-\gamma n)-2\sum_{1\leq i<j\leq n}F(t_i,t_j;\gamma).
\end{align*}
Taking the logarithm and dividing by $n^2$ yields
\begin{align*}
& \frac{1}{n^2} \log \Pro\left[\nu_{n}\in\mathcal O_{\varepsilon,d}(\mu), \int_{\R}|y|^p\,\nu_{n}(\dint y) \in (m-\delta, m+\delta)\right] \cr
& \leq -\frac{1}{n^2}\log C_{n,\beta,p} + (m-\delta)(\gamma-1) - \frac{\gamma(m-\delta)}{n} - \inf\limits_{\mu'\in\mathcal O_{\varepsilon,d}(\mu)}\int_{\R}\int_{\R} F_\alpha(x,y;\gamma)\,\mu'(\dint x)\,\mu'(\dint y) + \frac{\alpha}{n}.
\end{align*}
Letting $n\to\infty$, we arrive at
\begin{multline*}
\limsup_{n\to\infty} \frac{1}{n^2} \log \Pro\left[
\nu_{n}\in\mathcal O_{\varepsilon,d}(\mu), \int_{\R}|y|^p\,\nu_{n}(\dint y) \in (m-\delta, m+\delta)\right] \cr
\leq -B + (m-\delta)(\gamma-1) - \inf\limits_{\mu'\in\mathcal O_{\varepsilon,d}(\mu)}\int_{\R}\int_{\R} F_\alpha(x,y;\gamma)\,\mu'(\dint x)\,\mu'(\dint y).
\end{multline*}
The function $(x,y) \mapsto F_\alpha(x,y;\gamma)$ is continuous and bounded on $\R^2$. It follows that  the functional
\[
\mathcal M(\R)\to\R,\quad\mu'\mapsto \int_{\R}\int_{\R} F_\alpha(x,y;\gamma) \mu'(\dint x)\,\mu'(\dint y)
\]
is weakly continuous.
Now, if we take $\varepsilon\downarrow 0$ as well as $\delta\downarrow0$, then
\begin{multline*}
\lim_{\varepsilon\downarrow 0, \delta\downarrow 0}\limsup_{n\to\infty}
\frac{1}{n^2} \log \Pro\left[\nu_{n}\in\mathcal O_{\varepsilon,d}(\mu), \int_{\R}|y|^p\,\nu_{n}(\dint y) \in (m-\delta, m+\delta)  \right] \cr
\leq -B + m(\gamma-1) - \int_{\R}\int_{\R} F_\alpha(x,y;\gamma)\,\mu(\dint x)\,\mu(\dint y).
\end{multline*}
This holds for all $\alpha>0$ and $\gamma>0$. Fix any $\alpha>0$.
As $\gamma\downarrow 0$, we have
$$
F_{\alpha}(x,y;\gamma) \downarrow \min\left\{- \frac \beta 2 \log|x-y|,\alpha\right\}
$$
for every $(x,y)\in \R^2$. Moreover, we have the bound $F_{\alpha}(x,y;\gamma)\leq \alpha$.  The monotone convergence theorem, applied to the functions $(-F_{\alpha}(x,y;\gamma))_{0<\gamma<1}$, yields
\begin{multline*}
\lim_{\varepsilon\downarrow 0, \delta\downarrow 0}\limsup_{n\to\infty}
\frac{1}{n^2} \log \Pro\left[\nu_{n}\in\mathcal O_{\varepsilon,d}(\mu), \int_{\R}|y|^p\,\nu_{n}(\dint y) \in (m-\delta, m+\delta)  \right] \cr
\leq -B - m - \int_{\R}\int_{\R} \min\left\{- \frac \beta 2 \log|x-y|,\alpha\right\}\,\mu(\dint x)\,\mu(\dint y).
\end{multline*}
Next we let $\alpha\to +\infty$, use the monotone convergence theorem this time for the double integral over the set $\{(x,y)\in\R^2: |x-y| \leq 1\}$, and observe that the integral over $\{(x,y)\in\R^2: |x-y| \geq 1\}$ does not depend on $\alpha$,  to get
\begin{multline*}
\lim_{\varepsilon\downarrow 0, \delta\downarrow 0}\limsup_{n\to\infty}
\frac{1}{n^2} \log \Pro\left[\nu_{n}\in\mathcal O_{\varepsilon,d}(\mu), \int_{\R}|y|^p\,\nu_{n}(\dint y) \in (m-\delta, m+\delta)  \right] \cr
\leq -B - m + \frac \beta 2 \int_{\R}\int_{\R}   \log|x-y| \,\mu(\dint x)\,\mu(\dint y).
\end{multline*}
This completes the proof of the lemma.
\end{proof}

\subsection{Exponential tightness}
Having established the weak LDP for the sequence of pairs $(\nu_n,\int_\R |x|^p \nu_n(\dint x))$, we proceed to the exponential tightness. The next lemma is the last missing part of  the proof of Theorem~\ref{thm:ldp empirical pair}.
\begin{lemma}\label{lem:exp_tight}
The sequence of random elements $(\nu_n,\int_\R |x|^p \nu_n(\dint x))$, $n\in\N$, is exponentially tight on $\mathcal M(\R) \times [0,+\infty)$.
\end{lemma}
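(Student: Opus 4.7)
The plan is to reduce exponential tightness of the pair $(\nu_n, \int_\R |x|^p \nu_n(\dint x))$ to a single tail estimate on its second coordinate, which can be extracted directly from the explicit joint density \eqref{eq:distribution of (X_1,...,X_n)} via a scaling trick. Write $M_n := \int_\R |x|^p\,\nu_n(\dint x) = \tfrac{1}{n}\sum_{i=1}^n|X_{i,n}|^p$ for brevity.

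\textbf{Step 1 (compact sets).} For $M>0$ set
\[
\mathcal K_M := \left\{\mu \in \mathcal M(\R): \int_\R |x|^p\,\mu(\dint x) \leq M\right\}\times [0,M].
\]
The first factor is tight, since Markov's inequality yields $\mu([-R,R]^c) \leq M/R^p$ uniformly on it, hence relatively compact by Prokhorov; and it is closed because $\mu\mapsto\int|x|^p\,\mu(\dint x)$ is lower semicontinuous under weak convergence (write it as a supremum of integrals against bounded continuous functions). Therefore $\mathcal K_M$ is compact in $\mathcal M(\R)\times[0,+\infty)$.

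\textbf{Step 2 (reduction to a single event).} Since the second coordinate of the pair equals $\int |x|^p\,\nu_n(\dint x)$, we have the identity
\[
\left\{(\nu_n, M_n)\notin\mathcal K_M\right\} = \{M_n > M\} = \left\{\sum_{i=1}^n|X_{i,n}|^p > nM\right\}.
\]

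\textbf{Step 3 (tail estimate).} On $\{\sum_i |x_i|^p > nM\}$ split $e^{-n\sum_i|x_i|^p} = e^{-(n/2)\sum_i|x_i|^p}\cdot e^{-(n/2)\sum_i|x_i|^p}\le e^{-n^2M/2}\,e^{-(n/2)\sum_i|x_i|^p}$ and use \eqref{eq:distribution of (X_1,...,X_n)} to obtain
\[
\Pro\!\left[\sum_{i=1}^n|X_{i,n}|^p > nM\right] \leq \frac{e^{-n^2M/2}}{C_{n,\beta,p}}\int_{\R^n} e^{-(n/2)\sum_i|x_i|^p}\prod_{1\leq i<j\leq n}|x_i-x_j|^\beta\,\dint x.
\]
The rescaling $y_i := 2^{-1/p}x_i$ turns the integrand into $2^{n/p+\beta n(n-1)/(2p)}\cdot e^{-n\sum_i|y_i|^p}\prod_{i<j}|y_i-y_j|^\beta$, so the integral equals $2^{n/p+\beta n(n-1)/(2p)}\,C_{n,\beta,p}$. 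Cancelling $C_{n,\beta,p}$ gives
\[
\Pro\!\left[M_n>M\right] \leq e^{-n^2M/2}\cdot 2^{\,n/p+\beta n(n-1)/(2p)}.
\]

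\textbf{Step 4 (conclusion).} Taking logarithms and dividing by $n^2$,
\[
\limsup_{n\to\infty}\frac{1}{n^2}\log\Pro\!\left[(\nu_n,M_n)\notin\mathcal K_M\right] \leq -\frac{M}{2} + \frac{\beta\log 2}{2p}.
\]
For arbitrary $L>0$ one chooses $M$ so large that the right-hand side is at most $-L$, which establishes exponential tightness.

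The only subtlety is the compactness verification in Step 1; the rest is a direct computation based on the explicit density and its homogeneity under the dilation $x\mapsto 2^{1/p}x$. Note that the control of $M_n$ automatically supplies the tightness of the first coordinate, so the joint exponential tightness costs nothing beyond the one-dimensional tail bound.
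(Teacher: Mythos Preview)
Your proof is correct and follows the same structure as the paper's: the same compact sets $\mathcal K_M$, the same reduction of exponential tightness to the single tail event $\{M_n>M\}$. The paper simply cites \cite[Lemma~5.4.8]{hiai_petz} for the tail bound~\eqref{eq:tightness_proof_1}, whereas you supply a self-contained argument via the splitting $e^{-n\sum|x_i|^p}=e^{-(n/2)\sum|x_i|^p}e^{-(n/2)\sum|x_i|^p}$ and the dilation $x\mapsto 2^{1/p}x$, and you are also slightly more careful than the paper in noting that closedness of the first factor (needed to upgrade Prokhorov's relative compactness to compactness) follows from lower semicontinuity of $\mu\mapsto\int|x|^p\,\mu(\dint x)$.
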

\begin{proof}
For each $A>0$ the set $K_A:= \{\nu \in \mathcal M(\R): \int_\R |x|^p \nu(\dint x) \leq A\}$ is compact in $\mathcal M(\R)$. Indeed, given some $\varepsilon>0$, putting $B:=(A/\eps)^{1/p}$ and using Markov's inequality, we find that
$$
\nu(\{x\in\R:|x|>B\}) \leq {\int_{\R}|x|^p\nu(\dint x) \over B^p}\leq {A\over B^p} = \eps.
$$
Thus,
\[
\sup_{\nu\in K_A}\nu([-B,B]^c) \leq \varepsilon,
\] 
which shows that the family $K_A$ is tight. The weak compactness is finally a consequence of Prohorov's theorem (see \cite[Theorem 14.3]{Kallenberg}). Hence, the set $K_A^*:=  K_A \times [0,A]$
is compact in $\mathcal M(\R)\times [0,+\infty)$. It suffices to prove that
$$
\lim_{A\to +\infty} \limsup_{n\to\infty} \frac 1{n^2} \log \Pro\left[\left(\nu_n,\int_\R |x|^p \nu_n(\dint x)\right)\notin K_A^*\right] = -\infty,
$$
which is equivalent to
\begin{equation}\label{eq:tightness_proof_1}
\lim_{A\to +\infty} \limsup_{n\to\infty} \frac 1{n^2} \log \Pro\left[\int_\R |x|^p \nu_n(\dint x) >A \right] = -\infty.
\end{equation}
However, the latter property has been established in the proof of~\cite[Lemma~5.4.8]{hiai_petz}.
\end{proof}

\begin{rmk}\label{rem:LDP_on_(0,infty)}
Theorem~\ref{thm:ldp empirical pair} continues to hold if the space $\mathcal M(\R) \times [0,+\infty)$ is replaced by $\mathcal M(\R) \times (0,+\infty)$. Clearly, we can consider $(\nu_n, \int_\R |x|^p \nu_n(\dint x))$ as an element of the latter space since the probability of the event $\{\int_\R |x|^p \nu_n(\dint x)=0\}$ is zero.  The proof of the weak LDP does not change. However, in the proof of exponential tightness the set $K_A\times (0,A]$ is not compact and has to be replaced by $K_A\times [A^{-1},A]$. It remains to check that
\begin{equation}\label{eq:tightness_proof_4}
\lim_{A\to +\infty} \limsup_{n\to\infty} \frac 1{n^2} \log \Pro\left[\int_\R |x|^p \nu_n(\dint x) < \frac 1A \right] = -\infty.
\end{equation}
Using the formula for the joint density of $(X_{1,n},\ldots,X_{n,n})$ given in~\eqref{eq:distribution of (X_1,...,X_n)}, we can write
\begin{align*}
&\Pro\left[\int_\R |x|^p \nu_n(\dint x) < \frac 1A \right]=
\Pro\left[\sum_{i=1}^n |X_{i,n}|^p  < \frac nA \right]\\
&=
\frac 1 {C_{n,\beta,p}} \int_{\R^n} \left(e^{-n \sum_{i=1}^n |x_i|^p} \prod_{1\leq i < j \leq n} |x_j-x_i|^\beta\right) \ind_{\left\{\sum_{i=1}^n |x_i|^p < \frac n A\right\}} \,\dint (x_1, \ldots,  x_n)\\
&\leq
\frac 1 {C_{n,\beta,p}} \int_{\R^n} \left(\prod_{1\leq i < j \leq n} |x_j-x_i|^\beta\right) \ind_{\left\{\sum_{i=1}^n |x_i|^p < \frac n A\right\}} \,\dint( x_1 ,\ldots , x_n)\\
&\leq
\frac 1 {C_{n,\beta,p}}  \left(\frac n A \right)^{\frac {\beta n(n-1)}{2p} + \frac {n}{p}}\int_{\R^n} \left(\prod_{1\leq i < j \leq n} |\lambda_j-\lambda_i|^\beta\right) \ind_{\left\{\sum_{i=1}^n |\lambda_i|^p < 1\right\}} \,\dint (\lambda_1, \ldots , \lambda_n),
\end{align*}
where we used the change of variables $x_i = (n/A)^{1/p} \lambda_i$, $1\leq i \leq n$. The asymptotic behavior of the integral on the right-hand side has been determined in~\cite[Lemma~3.9]{KPT2018a}, where this integral was denoted by $I_{n,\beta,p}$. For some constant $\Delta(p)>0$ (whose explicit value can be found in~\cite[Theorem~3.1]{KPT2018a}), we have
\begin{align*}
\Pro\left[\int_\R |x|^p \nu_n(\dint x) < \frac 1A \right]
&\leq
\frac 1 {C_{n,\beta,p}}  \left(\frac n A \right)^{\frac {\beta n(n-1)}{2p} + \frac {n}{p}}
n^{-\frac{\beta n^2}{2 p}} \big(\Delta^{\beta}(p)(1+o(1))\big)^{\frac{n^2}{2}}\\
&\leq
e^{O(1) n^2} \left(\frac 1 A \right)^{\frac {\beta n(n-1)}{2p} + \frac {n}{p}},
\end{align*}
where the $O(1)$-constant does not depend on $A$. It follows that
$$
\limsup_{n\to\infty}\frac 1 {n^2} \log \Pro\left[\int_\R |x|^p \nu_n(\dint x) < \frac 1A \right]
\leq
O(1)  - \frac {\beta \log A}{2p},
$$
which implies \eqref{eq:tightness_proof_4}.
\end{rmk}
\section{Step 2 -- LDP for the cone measure}

We shall now use the contraction principle (see Proposition~\ref{prop:contraction principle})
to prove the part of Theorem~\ref{theo:main} relating to the cone measure.
\begin{proposition}\label{prop:LDP_cone}
For every $n\in\N$, let $Z_n$ be a random matrix sampled according to the cone measure on $\Sph_{p,\beta}^{n-1}$. Then the random probability measure
\[
\mu_n = \frac{1}{n}\sum_{i=1}^n \delta_{n^{1/p}\lambda_i(Z_n)}
\]
satisfies an LDP on $\mathcal M(\R)$ with speed $n^2$ and good rate function $\mathscr I:\mathcal M(\R) \to [0,+\infty]$  given by~\eqref{eq:J_def_rate_funct}.
\end{proposition}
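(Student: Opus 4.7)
The plan is to apply the contraction principle (Proposition~\ref{prop:contraction principle}) to the joint LDP for $(\nu_n,\int_\R|x|^p\nu_n(\dint x))$ from Theorem~\ref{thm:ldp empirical pair}, in the variant on $\mathcal M(\R)\times(0,+\infty)$ supplied by Remark~\ref{rem:LDP_on_(0,infty)}. Using the Schechtman--Zinn-type representation~\eqref{eq:schechtman_zinn_cone} together with $\|X_n\|_p^p = n\int_\R|x|^p\nu_n(\dint x)=:nM_n$, one sees that
$$
\mu_n \;\eqdistr\; \frac{1}{n}\sum_{i=1}^n \delta_{M_n^{-1/p}X_{i,n}} \;=\; F(\nu_n,M_n),
$$
where, writing $T_a\colon x\mapsto ax$ and $\#$ for push-forward,
$$
F\colon \mathcal M(\R)\times(0,+\infty)\to\mathcal M(\R),\qquad F(\nu,m):=T_{m^{-1/p}}\#\nu.
$$

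The first real task is to verify that $F$ is continuous with respect to the product of the weak topology on $\mathcal M(\R)$ and the standard topology on $(0,+\infty)$. For $(\nu_k,m_k)\to(\nu,m)$ with $m>0$ and $f\in\mathcal C_b(\R)$, one splits
$$
\int f(m_k^{-1/p}x)\,\nu_k(\dint x)-\int f(m^{-1/p}x)\,\nu(\dint x)
$$
into the difference $\int [f(m_k^{-1/p}x)-f(m^{-1/p}x)]\,\nu_k(\dint x)$, which vanishes by tightness of $\{\nu_k\}$ and uniform continuity of $f$ on compacta, plus the difference of integrals of the fixed bounded continuous function $f\circ T_{m^{-1/p}}$ against $\nu_k$ and $\nu$, which vanishes by weak convergence.

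With continuity of $F$ in hand, Proposition~\ref{prop:contraction principle} delivers an LDP for $(\mu_n)_{n\in\N}$ with speed $n^2$ and good rate function
$$
\mathscr I(\mu)=\inf\bigl\{\mathscr I_1(\nu,m)\,:\,m>0,\ F(\nu,m)=\mu\bigr\}.
$$
The fibre $F^{-1}(\{\mu\})$ is the one-parameter family $\nu=T_{m^{1/p}}\#\mu$, $m>0$, along which $\int|y|^p\nu(\dint y)=m\int|x|^p\mu(\dint x)$. Hence the finiteness constraint $m\geq\int|y|^p\nu(\dint y)$ of $\mathscr I_1$ is equivalent, for $m>0$, to $\int|x|^p\mu(\dint x)\leq 1$; if this fails, $\mathscr I(\mu)=+\infty$ in agreement with~\eqref{eq:J_def_rate_funct}. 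Under the constraint, the scaling identity
$$
\int_\R\int_\R\log|x-y|\,\nu(\dint x)\,\nu(\dint y)=\int_\R\int_\R\log|x-y|\,\mu(\dint x)\,\mu(\dint y)+\frac{\log m}{p}
$$
reduces the infimum to the one-variable problem of minimising $m\mapsto m-\tfrac{\beta}{2p}\log m$ over $m>0$, attained at $m_*=\beta/(2p)$ with value $\tfrac{\beta}{2p}\bigl(1+\log(2p/\beta)\bigr)$. Adding this to $B$ from~\eqref{eq:B_const_formula} and combining the logarithms is a routine simplification yielding the constant $\tfrac{\beta}{2p}\log\bigl(\sqrt{\pi}p\,\Gamma(p/2)/(2^p\sqrt{e}\,\Gamma((p+1)/2))\bigr)$ of~\eqref{eq:J_def_rate_funct}.

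The main obstacle is the continuity of $F$ near $m=0$, where the scaling $T_{m^{-1/p}}$ blows up; this is precisely the issue that Remark~\ref{rem:LDP_on_(0,infty)} is designed to sidestep by delivering the joint LDP on $\mathcal M(\R)\times(0,+\infty)$. Everything else is contraction-principle bookkeeping and an optimisation in a single real variable.
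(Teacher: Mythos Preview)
Your proposal is correct and follows essentially the same route as the paper: the same Schechtman--Zinn representation, the same scaling map $F(\nu,m)=T_{m^{-1/p}}\#\nu$ on $\mathcal M(\R)\times(0,+\infty)$, the same appeal to the contraction principle via Remark~\ref{rem:LDP_on_(0,infty)}, and the same one-variable minimisation of $m-\tfrac{\beta}{2p}\log m$ yielding the constant in~\eqref{eq:J_def_rate_funct}. The only cosmetic difference is that the paper cites \cite[Lemma~3.1]{KimRamanan} for the continuity of $F$, whereas you supply the (correct) tightness-plus-uniform-continuity argument directly.
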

\begin{proof}
Recall from~\eqref{eq:schechtman_zinn_cone} the distributional equality
$$
\mu_n = \frac{1}{n}\sum_{i=1}^n \delta_{n^{1/p}\lambda_i(Z_n)} \eqdistr \frac{1}{n}\sum_{i=1}^n \delta_{\frac{n^{1/p}X_{i,n}}{\|X_n\|_p}}.
$$
We shall prove an LDP for the right-hand side. To this end,  we shall apply contraction principle to the LDP obtained in Theorem~\ref{thm:ldp empirical pair}.  Let us define the function
\begin{equation}\label{eq:definition F_p}
 F_p:\cM(\R)\times(0,+\infty)\to\cM(\R),\quad F_p(\mu,c)(A):=\mu(c^{1/p}A),
\end{equation}
where $A$ is any Borel subset of $\R$. Note that the value $c=0$ is excluded.  It follows from \cite[Lemma 3.1]{KimRamanan} that this function is continuous.  In the following, it will be convenient to use the notation
$$
m_p(\nu) := \int_{\R}|x|^p \nu(\dint x), \qquad \nu\in \mathcal M(\R).
$$
Recall that $\nu_n := \frac 1n \sum_{i=1}^n \delta_{X_{i,n}}$ and observe that, for every $n\in\N$ and all Borel sets $A\subseteq\R$,
\[
F_p\left(\nu_n,m_p(\nu_n)\right)(A)
=
\nu_n\left(m_p^{1/p}(\nu_n) A\right)
=
\frac{1}{n}\sum_{i=1}^n \delta_{X_{i,n}}\left(n^{-1/p} \|X_n\|_p A\right)
=
\frac{1}{n}\sum_{i=1}^n \delta_{\frac{n^{1/p}X_{i,n}}{\|X_n\|_p}}\left(A\right).
\]
Since the mapping $F_p$ defined in \eqref{eq:definition F_p} is continuous, it follows from the contraction principle (see Proposition~\ref{prop:contraction principle})  in combination with Theorem~\ref{thm:ldp empirical pair} above (see also Remark~\ref{rem:LDP_on_(0,infty)}) that the sequence of random elements
\[
F_p\left(\nu_n,m_p(\nu_n)\right) = \frac{1}{n}\sum_{i=1}^n \delta_{\frac{n^{1/p}X_{i,n}}{\|X_n\|_p}},\qquad n\in\N,
\]
satisfies an LDP with speed $n^2$ and good rate function $\mathscr I_2:\mathcal M(\R) \to [0,+\infty]$ given by
\[
\mathscr I_2(\mu) = \inf_{(\nu,m):\, F_p(\nu,m)=\mu}\mathscr I_1(\nu,m) = \inf_{(\nu,m):\,\nu(m^{1/p}\cdot) = \mu(\cdot)}\mathscr I_1(\nu,m),
\]
where $\mathscr I_1:\mathcal M(\R) \times (0,+\infty)\to [0,+\infty]$ is the rate function from Theorem~\ref{thm:ldp empirical pair}. It remains to check that $\mathscr I_2 (\mu) = \mathscr I(\mu)$ for all $\mu\in\mathcal M(\R)$.

\vspace*{2mm}
\noindent
\textit{Case 1.}
Let $\mu\in \mathcal M(\R)$ be such that  $m_p(\mu)>1$. If $(\nu,m)$ is such that $\mu (\cdot) = \nu(m^{1/p}\cdot)$, then
\[
1< m_p(\mu) = \int_{\R} |x|^p \,\mu(\dint x) = \int_{\R}|m^{-1/p}y|^p\, \nu(\dint y),
\]
which implies $m< \int_{\R}|y|^p \,\nu(\dint y)$.
Consequently, $\mathscr I_1(\nu,m) = +\infty$, which shows that in this case $\mathscr I_2(\mu) = +\infty= \mathscr I(\mu)$.

\vspace*{2mm}
\noindent
\textit{Case 2.}
Now assume that $\mu\in\mathcal M(\R)$ is such that $m_p(\mu)\leq 1$. If $(\nu,m)$ is such that  $\mu (\cdot) = \nu(m^{1/p}\cdot)$, then
$
m \geq \int_{\R}|y|^p \,\nu(\dint y)
$.
We obtain
\begin{align*}
\mathscr I(\mu) & = \inf_{(\nu,m):\, \nu(m^{1/p}\cdot) = \mu(\cdot)}\mathscr I_1(\nu,m) \cr
& = \inf_{(\nu,m):\,\nu(m^{1/p}\cdot) = \mu(\cdot)}\left(- \frac \beta2 \int_{\R}\int_{\R} \log|x-y| \,\nu(\dint x)\, \nu(\dint y)  + m + B\right) \cr
& = \inf_{(\nu,m):\,\nu(m^{1/p}\cdot) = \mu(\cdot)}\left(- \frac \beta 2 \int_{\R}\int_{\R} \log\frac{|x-y|}{m^{1/p}} \,\nu(\dint x)\, \nu(\dint y) - \frac{\beta}{2p}\log m + m + B \right) \cr
& = \inf_{(\nu,m):\,\nu(m^{1/p}\cdot) = \mu(\cdot)}\left(- \frac \beta 2 \int_{\R}\int_{\R} \log|x-y| \,\mu(\dint x)\, \mu(\dint y) - \frac{\beta}{2p}\log m + m + B\right) \cr
& = - \frac \beta 2 \int_{\R}\int_{\R} \log|x-y| \,\mu(\dint x)\, \mu(\dint y) - \frac{\beta}{2p}\log \frac{\beta}{2p} + \frac{\beta}{2p} + B.
\end{align*}
Recalling the formula for $B$ given in~\eqref{eq:B_const_formula}, we can compute the constant term as follows:
\begin{align*}
B - \frac{\beta}{2p}\log \frac{\beta}{2p} + \frac{\beta}{2p}
&=
\left(\frac {\beta}{2p} \log \left(\frac{\beta \sqrt \pi \Gamma\left(\frac p2\right)}{2 \Gamma\left(\frac{p+1}{2}\right)}\right) - \frac \beta 2 \log 2 - \frac {3\beta}{4p}\right) -\frac{\beta}{2p}\log \frac{\beta}{2p} + \frac{\beta}{2p} \\
&=
\frac{\beta}{2p} \log\left(\frac{\sqrt{\pi}p \Gamma(\frac{p}{2})}{2^p\sqrt{e}\Gamma(\frac{p+1}{2})}\right),
\end{align*}
which completes the proof that $\mathscr I_2(\mu) = \mathscr I(\mu)$.
\end{proof}

\section{Step 3 - LDP for the uniform distribution on the ball}

We shall now apply once more the contraction principle to prove the remaining part of Theorem~\ref{theo:main}. We prepare the proof with the following auxiliary result on large deviations for sequences of beta random variables. We fix $a,b>0$ and recall that a random variable $\xi$ is beta distributed with parameters $a$ and $b$, we write $\xi\sim \beta_{a,b}$, provided that its Lebesgue density is given by
$$
x\mapsto {1\over B(a,b)}x^{a-1}(1-x)^{b-1},\qquad x\in[0,1]\,,
$$
where $B(a,b)={\Gamma(a)\Gamma(b)\over\Gamma(a+b)}$ is Euler's beta function. The next result is a slightly more general version of \cite[Lemma 4.1]{APT2018}. The proof is almost literally the same and for this reason omitted.

\begin{lemma}\label{lem:LDP-Beta}
Fix some $s>0$ and let $(a_n)_{n\in\N}$ and $(b_n)_{n\in\N}$ be positive sequences such that the limits
$$
\lim\limits_{n\to\infty}{a_n\over n^s}=a\in[0,\infty)\qquad\text{and}\qquad\lim\limits_{n\to\infty}{b_n\over n^s}=b\in[0,\infty)
$$
exist are not equal to zero at the same time. For each $n\in\N$ assume that  $\xi_n$ is beta distributed with parameters $a_n$ and $b_n$. Then the sequence $(\xi_n)_{n\in\N}$ satisfies a LDP with speed $n^s$ and rate function $\mathscr{I}_{a,b}$ given by
$$
\mathscr{I}_{a,b}(y) = \begin{cases}
-a\log{y\over a}-b\log{1-y\over b}-(a+b)\log(a+b) &: y\in(0,1)\text{ and }a>0,b>0\\
-a\log{y} &: y\in(0,1]\text{ and }a>0,b=0\\
-b\log{(1-y)} &: y\in[0,1)\text{ and }a=0,b>0\,.
\end{cases}
$$
\end{lemma}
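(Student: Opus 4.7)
\textbf{Proof plan for Lemma \ref{lem:LDP-Beta}.}

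The natural strategy is to work directly from the Lebesgue density
\[
f_n(x)=\frac{1}{B(a_n,b_n)}\,x^{a_n-1}(1-x)^{b_n-1},\qquad x\in[0,1],
\]
and read off the LDP from the pointwise limit of $\frac{1}{n^s}\log f_n$. First, using Stirling's expansion $\log\Gamma(t)=t\log t-t+O(\log t)$, I would compute
\[
\log B(a_n,b_n)=a_n\log a_n+b_n\log b_n-(a_n+b_n)\log(a_n+b_n)+O(\log n),
\]
so that (in the generic case $a,b>0$)
\[
\frac{1}{n^s}\log B(a_n,b_n)\;\longrightarrow\; a\log\frac{a}{a+b}+b\log\frac{b}{a+b}.
\]
Together with $\frac{a_n-1}{n^s}\to a$ and $\frac{b_n-1}{n^s}\to b$, this shows that for every $y\in(0,1)$,
\[
\frac{1}{n^s}\log f_n(y)\;\longrightarrow\;-\mathscr I_{a,b}(y),
\]
identifying the candidate rate function.

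Next I would promote this pointwise convergence to a weak LDP via Proposition \ref{prop:basis topology}, taking for the neighborhood basis the intervals $(y-\delta,y+\delta)\cap[0,1]$. For the lower bound, since $f_n$ is log-concave restricted to $(0,1)$ with continuous parameters, uniform bounds on a small interval give
\[
\Pro[\xi_n\in(y-\delta,y+\delta)]\;\geq\;2\delta\cdot\inf_{x\in(y-\delta,y+\delta)}f_n(x),
\]
and taking $\frac{1}{n^s}\log$ followed by $n\to\infty$ and then $\delta\to 0$ yields $\liminf\geq -\mathscr I_{a,b}(y)$. For the upper bound, the same interval is contained in $[0,1]$, so
\[
\Pro[\xi_n\in(y-\delta,y+\delta)]\;\leq\;2\delta\cdot\sup_{x\in(y-\delta,y+\delta)}f_n(x),
\]
and the same procedure gives $\limsup\leq -\mathscr I_{a,b}(y)$, using the continuity of the limit $x\mapsto a\log x+b\log(1-x)$ on $(0,1)$. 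Exponential tightness is trivial because the $\xi_n$ take values in the compact set $[0,1]$, so Proposition \ref{prop:equivalence weak and full LDP} upgrades the weak LDP to the full LDP with good rate function $\mathscr I_{a,b}$.

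The main obstacle — and where the proof in \cite{APT2018} has to be adapted — is the degenerate regime where exactly one of $a,b$ vanishes. Suppose $a>0,\ b=0$; then $b_n/n^s\to 0$ but $b_n$ may itself tend to $+\infty$, so Stirling's bound on $\log\Gamma(b_n)$ and on $\log\Gamma(a_n+b_n)$ must be kept at the $o(n^s)$ level. Concretely, I would show
\[
\frac{1}{n^s}\log B(a_n,b_n)\;=\;\frac{b_n}{n^s}\log b_n-\frac{a_n+b_n}{n^s}\log(a_n+b_n)+\frac{a_n}{n^s}\log a_n+o(1)\;\longrightarrow\;0,
\]
using $\frac{b_n}{n^s}\log b_n=o(1)$ (since $\frac{b_n}{n^s}\to 0$ and $\log b_n=O(\log n)$) and a similar cancellation for the remaining terms. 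The pointwise limit of $\frac{1}{n^s}\log f_n(y)$ then reduces to $a\log y$, matching the claimed rate function $-a\log y$ on $(0,1]$; the boundary value $y=1$ is recovered because $(1-y)^{b_n-1}\to 1$ uniformly near $y=1$ on the relevant scale. The case $a=0,\ b>0$ is identical after the substitution $y\mapsto 1-y$. Combining all three regimes yields the stated LDP.
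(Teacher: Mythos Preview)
The paper itself omits the proof of this lemma, citing it as a minor extension of \cite[Lemma~4.1]{APT2018}; your density--based approach via Stirling's formula and Proposition~\ref{prop:basis topology} is the natural one and, in the non-degenerate case $a,b>0$, is carried out correctly.

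There is, however, a genuine gap in your treatment of the degenerate regime $a>0$, $b=0$. The assertion that $\frac{b_n}{n^s}\log b_n = o(1)$ follows from ``$\frac{b_n}{n^s}\to 0$ and $\log b_n = O(\log n)$'' is false: take $a_n=an^s$ and $b_n = n^s/\log n$. Then $b_n/n^s = 1/\log n \to 0$ and $\log b_n \sim s\log n$, yet $\frac{b_n}{n^s}\log b_n \to s \neq 0$. For the same example your ``cancellation for the remaining terms'' also fails: one computes $\frac{a_n}{n^s}\log a_n - \frac{a_n+b_n}{n^s}\log(a_n+b_n) \to -s$, not $0$. The overall conclusion $\frac{1}{n^s}\log B(a_n,b_n)\to 0$ is nevertheless correct; the fix is to regroup the Stirling expansion as
\[
\frac{1}{n^s}\log B(a_n,b_n) \;=\; \frac{a_n}{n^s}\log\frac{a_n}{a_n+b_n} \;+\; \frac{b_n}{n^s}\log\frac{b_n}{a_n+b_n} \;+\; o(1),
\]
after which each displayed term tends to $0$ individually: the first because $a_n/(a_n+b_n)\to 1$ and $a_n/n^s$ stays bounded, the second because, with $r_n := b_n/(a_n+b_n)\to 0$ and $(a_n+b_n)/n^s\to a$, it equals $\frac{a_n+b_n}{n^s}\cdot r_n\log r_n \to a\cdot 0$. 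You should also remark that Stirling's expansion for $\Gamma(b_n)$ requires $b_n\to\infty$; when $(b_n)$ stays bounded (or tends to $0$) one has $\log\Gamma(b_n)=o(n^s)$ directly, but this case must be handled separately. Finally, your one-line treatment of the boundary point $y=1$ is too quick: if $b_n<1$ the density $f_n$ blows up at $1$, so the sup-bound $\Pro[\xi_n\in(1-\delta,1]]\le 2\delta\sup f_n$ is useless there and one should instead integrate $(1-x)^{b_n-1}$ explicitly on $(1-\delta,1]$.
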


We are now prepared to present the last step in the proof of Theorem~\ref{theo:main}.

\begin{proposition}
For every $n\in\N$, let $Z_n$ be a random matrix sampled uniformly from $\B_{p,\beta}^{n}$. Then the random probability measure
\[
\mu_n = \frac{1}{n}\sum_{i=1}^n \delta_{n^{1/p}\lambda_i(Z_n)}
\]
satisfies an LDP on $\mathcal M(\R)$ with speed $n^2$ and good rate function $\mathscr I:\mathcal M(\R) \to [0,+\infty]$  given by~\eqref{eq:J_def_rate_funct}.
\end{proposition}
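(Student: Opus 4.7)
The plan is to derive the LDP for uniformly distributed $Z_n$ from the cone-measure LDP (Proposition~\ref{prop:LDP_cone}) by incorporating the scalar factor $U^{1/\ell}$ from the Schechtman-Zinn-type representation~\eqref{eq:schechtman_zinn_uniform}, and then invoking the contraction principle. Concretely, \eqref{eq:schechtman_zinn_uniform} realizes $\mu_n$ in distribution as the pushforward of $\nu_n^{\mathrm{cone}}:=\frac 1n\sum_{i=1}^n\delta_{n^{1/p}X_{i,n}/\|X_n\|_p}$ under the dilation $x\mapsto U^{1/\ell}x$, where $U\sim\Uni[0,1]$ is independent of $X_n$ and $\ell=\beta n(n-1)/2+\beta n$. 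Since $U^{1/\ell}$ has a $\beta_{\ell,1}$-distribution and $\ell/n^2\to\beta/2$, Lemma~\ref{lem:LDP-Beta} (applied with $s=2$, $a=\beta/2$, $b=0$) yields an LDP on $(0,1]$ for $(U^{1/\ell})_{n\in\N}$ at speed $n^2$ with good rate function $\mathscr I_U(t)=-\tfrac{\beta}{2}\log t$ for $t\in(0,1]$.

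Combining this with Proposition~\ref{prop:LDP_cone} via Proposition~\ref{JointRateFunction} (using the independence of $U$ and $X_n$) gives a joint LDP on $\mathcal M(\R)\times(0,1]$ for the pair $(\nu_n^{\mathrm{cone}},U^{1/\ell})$ at speed $n^2$ with summed rate function $\mathscr I_{\mathrm{cone}}(\nu)+\mathscr I_U(t)$. The scaling map $G:\mathcal M(\R)\times(0,1]\to\mathcal M(\R)$, $G(\nu,t)(A):=\nu(A/t)$, is continuous with respect to the product of the weak topology and the usual topology on $(0,1]$ (by the same argument as in \cite[Lemma~3.1]{KimRamanan}). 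Since $\mu_n\eqdistr G(\nu_n^{\mathrm{cone}},U^{1/\ell})$, the contraction principle (Proposition~\ref{prop:contraction principle}) yields an LDP for $(\mu_n)_{n\in\N}$ at speed $n^2$ with good rate function
$$\mathscr I(\mu)=\inf_{(\nu,t):\,G(\nu,t)=\mu}\bigl(\mathscr I_{\mathrm{cone}}(\nu)+\mathscr I_U(t)\bigr).$$

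To identify this with~\eqref{eq:J_def_rate_funct}, note that $G(\nu,t)=\mu$ forces $\nu$ to be the pushforward of $\mu$ under $y\mapsto y/t$, so that
$$\int_\R|x|^p\,\nu(\dint x)=t^{-p}\int_\R|y|^p\,\mu(\dint y),$$
and
$$\int_\R\int_\R\log|x_1-x_2|\,\nu(\dint x_1)\nu(\dint x_2)=\int_\R\int_\R\log|y_1-y_2|\,\mu(\dint y_1)\mu(\dint y_2)-\log t.$$
The first identity forces $t^p\geq\int_\R|y|^p\mu(\dint y)$; in particular, no admissible $(\nu,t)\in\mathcal M(\R)\times(0,1]$ with $\mathscr I_{\mathrm{cone}}(\nu)<+\infty$ exists (and $\mathscr I(\mu)=+\infty$) whenever $\int_\R|y|^p\mu(\dint y)>1$. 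Otherwise, substituting the second identity into $\mathscr I_{\mathrm{cone}}(\nu)$ produces a $+\tfrac{\beta}{2}\log t$ contribution that cancels exactly against $\mathscr I_U(t)=-\tfrac{\beta}{2}\log t$; hence the objective is independent of $t$ on the admissible set and equals the right-hand side of~\eqref{eq:J_def_rate_funct}. The one substantive point in this program is this $\log t$ cancellation — a consistency check between the dilation weight $U^{1/\ell}$ and the scaling law of the logarithmic energy — while everything else is direct bookkeeping from earlier results.
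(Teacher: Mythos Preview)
Your proof is correct and follows essentially the same approach as the paper: both derive the ball case from the cone-measure LDP by first establishing an LDP for $U^{1/\ell}$ via Lemma~\ref{lem:LDP-Beta}, then taking the product LDP (Proposition~\ref{JointRateFunction}), and finally applying the contraction principle with the dilation map, with the key observation that the $\tfrac{\beta}{2}\log t$ term arising from rescaling the logarithmic energy exactly cancels $\mathscr I_U(t)$. The only cosmetic difference is that the paper carries out the joint LDP on $\mathcal M(\R)\times(0,+\infty)$ and restricts to $(0,1]$ in the minimization, whereas you work on $\mathcal M(\R)\times(0,1]$ from the start.
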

\begin{proof}
Recall from~\eqref{eq:schechtman_zinn_uniform} the representation
\begin{equation*}
\mu_n
=
\frac 1n \sum_{i=1}^n \delta_{n^{1/p}\lambda_i(Z_n)}
\eqdistr
\frac 1n \sum_{i=1}^n \delta_{U^{1 /\ell}{X_{i,n}\over \|X_n\|_p}},
\end{equation*}
where $\ell=\ell(n,\beta) = \beta\frac{n(n-1)}{2} + \beta n$, $X_n=(X_{1,n},\dots,X_{n,n})$ has joint Lebesgue density given in~\eqref{eq:distribution of (X_1,...,X_n)} and $U$ is independent from $X_n$ and uniformly distributed on $[0,1]$.
The ultimate goal is to prove an LDP for the sequence
\[
\frac{1}{n}\sum_{i=1}^n \delta_{U^{1/\ell}n^{1/p}\frac{X_{i,n}}{\|X\|_p}}.
\]
Note that for each $n\in\N$ we have that $U^{1/\ell}\sim\beta_{\ell,1}$. Thus, Lemma \ref{lem:LDP-Beta} with $a_n=\ell$, $b_n=0$, $a=\beta/2$, $b=0$ and $s=2$ implies that the sequence $(U^{1/\ell})_{n\in\N}$ satisfies an LDP with speed $n^2$ and rate function
\[
\mathscr I_{\beta/2,0}(x) = \begin{cases}
-\frac{\beta}{2}\log x &\,: x\in(0,1] \\
+\infty &\,: \text{otherwise}.
\end{cases}
\]
In the next step towards the final LDP, we observe that as a direct consequence of Proposition~\ref{JointRateFunction}, Proposition~\ref{prop:LDP_cone} and the previous LDP for $(U^{1/\ell})_{n\in\N}$, the sequence of pairs
\[
\left(\left(\frac{1}{n}\sum_{i=1}^n \delta_{\frac{n^{1/p}X_{i,n}}{\|X_n\|_p}},U^{1/\ell}\right)\right)_{n\in\N}
\]
satisfies an LDP with speed $n^2$ and rate function $\mathscr I_4:\mathcal M(\R)\times (0,+\infty) \to[0,+\infty]$ given by
\begin{align*}
\mathscr I_4(\nu,z) & = \mathscr I(\nu) + \mathscr I_{\beta/2,0}(z) \cr
& = - \frac \beta 2 \int_{\R}\int_{\R} \log|x-y| \,\nu(\dint x)\, \nu(\dint y) + \frac{\beta}{2p} \log\left(\frac{\sqrt{\pi}p \Gamma(\frac{p}{2})}{2^p\sqrt{e}\Gamma(\frac{p+1}{2})}\right) - \frac{\beta}{2}\log z.
\end{align*}
This time we shall apply the contraction principle with a continuous function similar to the one defined in \eqref{eq:definition F_p} but for $p=-1$, i.e., with
\begin{equation*}
 F:\cM(\R)\times(0,+\infty)\to\cM(\R),\quad F(\mu,c)(A)=\mu\left(\frac{1}{c}A\right),
\end{equation*}
where $A$ is any Borel subset of $\R$. This map is continuous.  Note that, for any Borel set $A\subseteq \R$,
\begin{equation}\label{eq:epm_measures_techn}
F\left(\left(\frac{1}{n}\sum_{i=1}^n \delta_{\frac{n^{1/p}X_{i,n}}{\|X_n\|_p}},U^{1/\ell}\right) \right)(A) = \frac{1}{n}\sum_{i=1}^n \delta_{\frac{n^{1/p}X_{i,n}}{\|X_n\|_p}}\left(U^{-1/\ell}A\right) =
\frac{1}{n}\sum_{i=1}^n \delta_{U^{1/\ell}\frac{n^{1/p}X_{i,n}}{\|X_n\|_p}}(A).
\end{equation}
The contraction principle (see Proposition \ref{prop:contraction principle}) shows that the empirical measures in~\eqref{eq:epm_measures_techn} satisfy an LDP on $\mathcal M(\R)$ with speed $n^2$ and rate function
$$
\mathscr I_5(\mu)  = \inf_{(\nu,z):\, F(\nu,z)=\mu}\mathscr I_4(\nu,z)
=
\inf_{(\nu,z):\, F(\nu,z)=\mu}
\left(\mathscr I(\nu) - \frac \beta 2 \log z\right).
$$
Here and in the following, $z$ is restricted to the interval $(0,1]$.
It remains to check that $\mathscr I_5(\mu) = \mathscr I(\mu)$ for all $\mu\in \mathcal M(\R)$.

\vspace*{2mm}
\noindent
\textit{Case 1.}
Let $\mu\in \mathcal M(\R)$ be such that  $m_p(\mu)>1$. If $(\nu,z)$ is such that $\mu (\cdot) = \nu(z^{-1}\cdot)$, then $m_p(\nu)= z^{-p} m_p(\mu) >1$. Consequently, $\mathscr I(\nu) = +\infty$ and it follows that in this case $\mathscr I_5(\mu) = +\infty= \mathscr I(\mu)$.

\vspace*{2mm}
\noindent
\textit{Case 2.}
Let $\mu\in \mathcal M(\R)$ be such that  $m_p(\mu)\leq 1$. If $(\nu,z)$ is such that $\mu (\cdot) = \nu(z^{-1}\cdot)$, then $m_p(\nu)= z^{-p} m_p(\mu)$ and we have two possibilities. If $m_p(\nu)>1$, then $\mathscr I(\nu) - \frac{\beta}{2} \log z =+\infty$. So we let in the following $m_p(\nu)\leq 1$, which means that $z$ is restricted to the non-empty interval $[m_p^{1/p}(\mu), 1]$. Then,
\begin{align*}
\mathscr I(\mu) & = \inf_{(\nu,z):\, F(\nu,z)=\mu}\mathscr I_4(\nu,z) \cr
& = \inf_{(\nu,z):\, \nu(z^{-1}\cdot)=\mu(\cdot)}\left( - \frac \beta 2 \int_{\R}\int_{\R} \log|x-y| \,\nu(\dint x)\, \nu(\dint y) \right.\\
&\qquad\qquad\qquad\qquad\qquad\qquad\left.+\frac{\beta}{2p} \log\left(\frac{\sqrt{\pi}p \Gamma(\frac{p}{2})}{2^p\sqrt{e}\Gamma(\frac{p+1}{2})}\right) - \frac{\beta}{2}\log z\right) \cr
& = \inf_{(\nu,z):\, \nu(z^{-1}\cdot)=\mu(\cdot)}\left(\frac \beta 2 \log z - \frac \beta 2 \int_{\R}\int_{\R} \log|x-y| \,\mu(\dint x)\, \mu(\dint y)\right.\\
&\qquad\qquad\qquad\qquad\qquad\qquad\left.+\frac{\beta}{2p} \log\left(\frac{\sqrt{\pi}p \Gamma(\frac{p}{2})}{2^p\sqrt{e}\Gamma(\frac{p+1}{2})}\right) - \frac{\beta}{2}\log z\right) \cr
& = -\frac \beta 2 \int_{\R}\int_{\R} \log|x-y| \,\mu(\dint x)\, \mu(\dint y) + \frac{\beta}{2p} \log\left(\frac{\sqrt{\pi}p \Gamma(\frac{p}{2})}{2^p\sqrt{e}\Gamma(\frac{p+1}{2})}\right),
\end{align*}
which shows that again $\mathscr I_5(\mu)= \mathscr I(\mu)$.
\end{proof}

\section {Proof of Corollary~\ref{cor:LLN}} \label{sec:proof_LLN}

By \cite[Proposition 5.3.4]{hiai_petz}, the probability measure $\mu_\infty^{(p)}$ is the only minimizer of $\mathscr I$ and it holds that $\mathscr I(\mu_\infty^{(p)})= 0$.

Let $d(\,\cdot\,,\, \cdot\,)$ be any metric on $\mathcal M(\R)$ metrizing the topology of weak convergence. Then, for every $\eps>0$,
$$
\limsup_{n\to\infty} \frac 1{n^2} \log  \Pro\left[d(\mu_n, \mu_\infty^{(p)}) \geq \eps \right]
\leq -\inf_{\tau\in \mathcal M(\R):\, d(\tau,\mu_\infty^{(p)})\geq \eps} \mathscr I(\tau).
$$
We claim that the infimum on the right-hand side is strictly negative.  Indeed, if it were $0$, we could find a sequence $(\tau_k)_{k\in\N}$ such that $d(\tau_k,\mu_\infty^{(p)})\geq \eps$ for all $k\in\N$ and $\lim_{k\to\infty} \mathscr I(\tau_k) = 0$. Since the rate function $\mathscr I$ is good,  the  set $\{\tau_k: k\in\N\}$ is weakly compact, and by passing to a subsequence we may assume that $\tau_k \to \tau$ weakly, as $k\to\infty$. By the lower semicontinuity and non-negativity of $\mathscr I$, this implies that $\mathscr I(\tau) = 0$. This means that $\tau = \mu_\infty^{(p)}$, which contradicts the assumption that $d(\tau_k,\mu_\infty^{(p)})\geq \eps$ for all $k\in\N$.

Hence, the infimum is negative and we have, for all sufficiently large $n> n_0(\eps)\in\N$ and some $c=c(\eps)>0$, 
$$
\Pro\left[d(\mu_n, \mu_\infty^{(p)}) \geq \eps \right] \leq e^{-c n^2}.
$$
From the Borel--Cantelli Lemma it follows that $d(\mu_n, \mu_\infty^{(p)})\to 0$ with probability $1$.

\section{Sketch of the proofs in the non self-adjoint case}\label{sec:ProofNonSelfAdjoint}

The proof of Theorem \ref{thm:MainNonSelfAdjoint} is essentially the same as the one of Theorem \ref{theo:main} (if $0<p<+\infty$) and Theorem \ref{theo:main_p_infty} if $p=+\infty$. For this reason, we only sketch the argument and highlight the points, where differences to the self-adjoint case occur. As above, we restrict our attention to the case $p<+\infty$.

One of the key ingredients in the self-adjoint case were the distributional representations \eqref{eq:schechtman_zinn_uniform} (for the uniform distribution) and \eqref{eq:schechtman_zinn_cone} (for the cone measure) taken from \cite{KPT2018a}. The proof of these representations in \cite{KPT2018a} was based on a change-of-variables formula (Proposition 4.1.1 in \cite{AGZ2010}). In the non self-adjoint case, a reformulation of this formula reads as follows (see \cite[Proposition 4.1.3]{AGZ2010}):

\begin{lemma}
Let $0<p<+\infty$ and $\beta\in\{1,2,4\}$. For each $n\in\N$ let $Z_n\in\SSS\B_p^n$ be uniformly distributed and, independently, let $\pi$ be a uniform random permutation on $\{1,\ldots,n\}$. Then the random vector $(s_{\pi(1)}^2(Z_n),\ldots,s_{\pi(n)}^2(Z_n))$ has density on $\R_+^n$ proportional to
$$
(x_1,\ldots,x_n)\mapsto\prod_{1\leq i<j\leq n}|x_i-x_j|^\beta\,\prod_{i=1}^nx_i^{{\beta\over 2}-1}\,\ind_{\{\sum_{i=1}^nx_i^{p/2}\leq 1\}}.
$$
\end{lemma}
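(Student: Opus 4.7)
The plan is to derive the lemma directly from the singular value decomposition change-of-variables formula cited in the excerpt (AGZ Proposition 4.1.3), followed by the substitution $x_i=s_i^2$. The role of the random permutation $\pi$ is simply a bookkeeping device: the density obtained from SVD integration is automatically symmetric in its arguments, so it equals the density of the singular values taken in a uniformly random order.

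First I would set up the SVD parameterisation $A=UDV^{\ast}$ with $U,V$ in the appropriate unitary group over $\F_\beta$ (orthogonal, unitary, or compact symplectic for $\beta=1,2,4$) and $D=\diag(s_1,\dots,s_n)$ with $s_i\ge 0$. AGZ Proposition 4.1.3 states that, up to a Haar-measure factor from $U$ and $V$, the pushforward of the Lebesgue measure on $\Mat_n(\F_\beta)$ to the singular values is
\[
c_{n,\beta}\,\prod_{1\le i<j\le n}|s_i^{2}-s_j^{2}|^{\beta}\prod_{i=1}^{n}s_i^{\beta-1}\,ds_1\cdots ds_n
\]
on the Weyl chamber $s_1\ge\cdots\ge s_n\ge 0$. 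Since the Schatten ball indicator $\ind_{\{\sum s_i^{p}\le 1\}}$ is a symmetric function of the singular values and depends on $A$ only through them, integrating out $U$ and $V$ against the indicator of $\SSS\B_p^{n}$ yields, on the full orthant $\R_+^n$ after unordering (i.e. after symmetrisation by a uniform random permutation $\pi$ of the labels), the density proportional to
\[
\prod_{1\le i<j\le n}|s_i^{2}-s_j^{2}|^{\beta}\,\prod_{i=1}^{n}s_i^{\beta-1}\,\ind_{\{\sum_i s_i^{p}\le 1\}}.
\]

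Next I would change variables $x_i=s_i^{2}$. Then $ds_i=dx_i/(2\sqrt{x_i})$ contributes a Jacobian $2^{-n}\prod_i x_i^{-1/2}$, the factor $\prod_{i<j}|s_i^{2}-s_j^{2}|^{\beta}$ becomes $\prod_{i<j}|x_i-x_j|^{\beta}$, the factor $\prod_i s_i^{\beta-1}$ becomes $\prod_i x_i^{(\beta-1)/2}$, and the constraint $\sum s_i^{p}\le 1$ becomes $\sum x_i^{p/2}\le 1$. Multiplying these gives the exponent $(\beta-1)/2-1/2=\beta/2-1$ on $x_i$, producing exactly the density stated in the lemma (absorbing the $2^{-n}$ into the proportionality constant).

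The only delicate point is the derivation of the Jacobian factor $\prod_{i<j}|s_i^{2}-s_j^{2}|^{\beta}\prod_i s_i^{\beta-1}$, which is the non self-adjoint analogue of the Weyl integration formula used for $\mathscr H_n(\F_\beta)$ in \cite{KPT2018a}; rather than reprove it I would cite AGZ Proposition 4.1.3 directly, as the excerpt already does. Everything else is a routine change of variables and symmetrisation.
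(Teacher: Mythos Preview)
Your proposal is correct and follows exactly the route the paper indicates: the paper does not give a proof at all but simply presents the lemma as a reformulation of \cite[Proposition 4.1.3]{AGZ2010}, and you have spelled out precisely that reformulation (integrate out the Haar factors, symmetrise via the random permutation, then substitute $x_i=s_i^2$). Your Jacobian computation $(\beta-1)/2-1/2=\beta/2-1$ is right, so nothing is missing.
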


Noting that the function
$$
\R_+^n\to\R,\quad(x_1,\ldots,x_n)\mapsto h_\beta(x_1,\ldots,x_n):=\prod_{1\leq i<j\leq n}|x_i-x_j|^\beta\,\prod_{i=1}^nx_i^{{\beta\over 2}-1}
$$
is homogeneous of degree
$$
m=m(n):={\beta n(n-1)\over 2}+n\Big({\beta\over 2}-1\Big)
$$
we can now argue as in the proof of Lemma 4.2 and Corollary 4.3 in \cite{KPT2018a} to conclude the following Schechtman-Zinn-type distributional representations for Schatten $p$-classes (note that instead of $p$ the value $p/2$ occurs at many places since we consider the squares of the singular values and not the singular values themself).

\begin{proposition}\label{prop:schechtman-zinn-non-self-adjoint}
Let $0<p<+\infty$ and $\beta\in\{1,2,4\}$. For each $n\in\N$ let $Z_n\in\SSS\B_p^n$ be uniformly distributed and, independently, let $\pi$ be a uniform random permutation of $\{1,\ldots,n\}$. Then
$$
(s_1^2(Z_n),\ldots,s_n^2(Z_n))\eqdistr U^{1\over n+m}{X_n\over\|X_n\|_{p/2}},
$$
where the random vector $X_n\in\R_+^n$ has joint density proportional to
\begin{align}\label{eq:JointDensitySchatten}
(x_1,\ldots,x_n)\mapsto e^{-n\sum_{i=1}^n|x_i|^{p/2}}\,h_\beta(x_1,\ldots,x_n)
\end{align}
and, independently of $X_n$, $U$ is uniformly distributed on $(0,1)$. Also, if $Z_n\in\partial\SSS\B_p^n$ is distributed according to the cone probability measure, one has that
$$
(s_1^2(Z_n),\ldots,s_n^2(Z_n))\eqdistr {X_n\over\|X_n\|_{p/2}}.
$$
\end{proposition}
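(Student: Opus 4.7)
My plan is to follow the Schechtman--Zinn style argument of \cite[Lemma~4.2 and Corollary~4.3]{KPT2018a} from the self-adjoint setting, with the homogeneity of the weight $h_\beta$ playing the same role it did there. Starting from the preceding lemma, the symmetrized vector $\tilde S_n := (s_{\pi(1)}^2(Z_n),\ldots,s_{\pi(n)}^2(Z_n))$ has Lebesgue density on $\R_+^n$ proportional to $h_\beta(x)\,\ind_{\{\sum x_i^{p/2}\leq 1\}}$ when $Z_n$ is uniform on $\SSS\B_p^n$. The key step will be a polar decomposition adapted to the functional $x \mapsto \sum_{i=1}^n x_i^{p/2}$, which turns both this density and the density \eqref{eq:JointDensitySchatten} of $X_n$ into product measures on a radial coordinate times a universal angular measure on $\Sigma := \{y \in \R_+^n : \sum_{i=1}^n y_i^{p/2} = 1\}$.

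Concretely, I would write $x = s^{2/p}y$ with $s := \sum_{i=1}^n x_i^{p/2} \in (0,\infty)$ and $y \in \Sigma$. A standard Jacobian computation in polar coordinates for the $\ell_{p/2}$-quasi-norm on $\R_+^n$ gives $\dint x = c_{n,p}\,s^{2n/p - 1}\,\dint s\,\dint\sigma(y)$ for an appropriate surface measure $\sigma$ on $\Sigma$. Combining this with $h_\beta(s^{2/p}y) = s^{2m/p}h_\beta(y)$ from the homogeneity observation preceding the proposition, both densities of interest factorize: the law of $\tilde S_n$ becomes $\propto s^{2(n+m)/p - 1}\,\ind_{\{s\leq 1\}}\,\dint s \otimes h_\beta(y)\,\dint\sigma(y)$, while the law of $X_n$ becomes $\propto s^{2(n+m)/p - 1}\,e^{-ns}\,\dint s \otimes h_\beta(y)\,\dint\sigma(y)$. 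In particular, in both cases the radial and angular components are independent, and the angular components have the \emph{same} law, proportional to $h_\beta(y)\,\dint\sigma(y)$ on $\Sigma$. For $\tilde S_n$, the radial component has density $\propto s^{2(n+m)/p-1}\,\ind_{\{s \leq 1\}}$, which is exactly the distribution of $U^{p/(2(n+m))}$ for $U \sim \Uni(0,1)$; for $X_n$ it equals $\|X_n\|_{p/2}^{p/2}$ and is Gamma-distributed.

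Since the angular component of $X_n$ is $X_n/\|X_n\|_{p/2}$ and the angular component of $\tilde S_n$ is $\tilde S_n/\|\tilde S_n\|_{p/2}$, matching the angular laws and raising the radial parts to the $2/p$-th power yields
\[
\tilde S_n \;\eqdistr\; \bigl(U^{p/(2(n+m))}\bigr)^{2/p}\,\frac{X_n}{\|X_n\|_{p/2}} \;=\; U^{1/(n+m)}\,\frac{X_n}{\|X_n\|_{p/2}},
\]
with $U$ independent of $X_n$, which is the first claim (the permutation $\pi$ can be absorbed because the relevant later use---the empirical measure---is permutation invariant, and the right-hand side is already permutation symmetric). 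For the cone-measure statement, the joint distribution of $(s_1^2(Z_n),\ldots,s_n^2(Z_n))$ is, by definition of the cone measure on $\partial \SSS\B_p^n$, supported on $\Sigma$ with angular density $\propto h_\beta(y)\,\dint\sigma(y)$, which is precisely the law of $X_n/\|X_n\|_{p/2}$ identified above. The main technical obstacle is the careful verification of the polar-coordinate Jacobian and the identification of the cone measure on $\partial\SSS\B_p^n$ with its push-forward to $\Sigma$; once these are in place, the factorization produced by the homogeneity of $h_\beta$ delivers both claims in exact parallel to the self-adjoint argument of \cite{KPT2018a}.
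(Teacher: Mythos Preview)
Your proposal is correct and follows essentially the same route as the paper: the paper itself does not spell out a proof but refers to the Schechtman--Zinn argument of \cite[Lemma~4.2 and Corollary~4.3]{KPT2018a}, which is precisely the polar-decomposition-plus-homogeneity computation you outline. Your handling of the permutation $\pi$ (noting that it is immaterial for the empirical measure and that the right-hand side is already exchangeable) is appropriate, since the statement as written leaves the permuted left-hand side implicit.
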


In a next step, one proves an LDP for the sequence of empirical measures
$$
{1\over n}\sum_{i=1}^n\delta_{X_n^{(i)}},\qquad n\in\N
$$
determined by the coordinates $X_n^{(i)}$ of the random vector $X_n=(X_n^{(1)},\ldots,X_n^{(i)})$ with joint density \eqref{eq:JointDensitySchatten}. In fact, from \cite[Theorem 5.5.1]{hiai_petz} (with the choice $m(n)=n$, $\gamma=\beta/2$, $\alpha=1$ and $\gamma=0$ there) we conclude that this sequence satisfies an LDP on the space $\mathcal{M}(\R_+)$ of Borel probability measures on $\R_+$ endowed with the weak topology with speed $n^2$ and good rate function
$$
\mu\mapsto -{\beta\over 2}\int_{\R_+}\int_{\R_+}\log|x-y|\mu(\dint x)\mu(\dint y)+\int_{\R_+}|x|^{p/2}\mu(\dint x)+B.
$$
As in the proof for the self-adjoint case it is then possible to prove that the sequence of pairs
$$
\Big(\sum_{i=1}^n\delta_{X_n^{(i)}},\sum_{i=1}^n(X_n^{(i)})^{p/2}\Big),\qquad n\in\N,
$$
satisfies an LDP on the product space $\mathcal{M}(\R_+)\times\R_+$ supplied with the product of the weak topology on $\mathcal{M}(\R_+)$ and the standard topology on $\R_+$ with good rate function
$$
(\mu,m)\mapsto\begin{cases}
-{\beta\over 2}\int_{\R_+}\int_{\R_+}\log|x-y|\mu(\dint x)\mu(\dint y)+\int_{\R_+}|x|^{p/2}\mu(\dint x)+B &: m\geq \int_{\R_+}|x|^{p/2}\mu(\dint x)\\
+\infty &: m<\int_{\R_+}|x|^{p/2}\mu(\dint x).
\end{cases}
$$
Continuing the argument along the lines of the self-adjoint case one concludes that if $Z_n$ is uniformly distributed in $\SSS\B_p^n$ or distributed according to the cone measure on $\partial\SSS\B_p^n$ then the sequence of random probability measures
$$
{1\over n}\sum_{i=1}^n\delta_{n^{2/p}s_i^2(Z_n)},\qquad n\in\N,
$$
satisfies an LDP on $\mathcal{M}(\R_+)$ with speed $n^2$ and good rate function
\begin{align}\label{eq:RateFunctionNonSelfAdjoint}
\mu\mapsto\begin{cases}
-{\beta\over 2}\int_{\R_+}\int_{\R_+}\log|x-y|\mu(\dint x)\mu(\dint y)+C_{p,\beta} &: \int_{\R_+}|x|^{p/2}\mu(\dint x)\leq 1\\
+\infty &: \int_{\R_+}|x|^{p/2}\mu(\dint x)> 1,
\end{cases}
\end{align}
where $C_{p,\beta}$ is a constant depending on $p$ and on $\beta$.

To determine the value of $C_{p,\beta}$ we use the same symmetrization trick as in the proof of \cite[Proposition 6]{KPT2018b}. Namely, in order to find a Borel probability measure $\mu\in\mathcal{M}(\R_+)$ which minimizes the functional $\mu\mapsto-{\beta\over 2}\int_{\R_+}\int_{\R_+}\log|x-y|\mu(\dint x)\mu(\dint y)$ under the condition that $\int_{\R_+}|x|^{p/2}\mu(\dint x)\leq 1$, we maximize the expression ${\beta\over 2}\E\log|V-\widetilde{V}|$ over all random variables $V\geq 0$ under the condition that $\E V^{p/2}\leq 1$, where $\widetilde{V}$ is an independent copy of $V$. Let $\varepsilon$ be a Rademacher random variable taking the values $+1$ and $-1$ with probability $1/2$ and assume that $\varepsilon$ and $V$ are independent. Putting $U:=\varepsilon\sqrt{V}$ and denoting by $\widetilde{U}$ an independent copy of $U$ we have that
\begin{align*}
{\beta\over 2}\E\log|V-\widetilde{V}|={\beta\over 2}\E\log|U^2-\widetilde{U}^2|={\beta\over 2}\E\log|U-\widetilde{U}|+{\beta\over 2}\E\log|U+\widetilde{U}| = \beta\E\log|U-\widetilde{U}|.
\end{align*}
This means that we need to maximize the functional $\beta\E\log|U-\widetilde{U}|$ over all symmetric random variables $U$ on $\R$ under the condition that $\E|U|^p\leq 1$. It is known that the maximizer of this problem is given by a random variable with Lebesgue density $b_p^{-1}h_p(x/b_p)$, where $h_p(x)$ and $b_p$ are given by \eqref{eq:ullman_def}, see \cite{SaffBOOK}. Using now the computations carried out in \cite[Section 2.5]{KPT2018a} we conclude that the precise value of the constant $C_{p,\beta}$ in the above rate function \eqref{eq:RateFunctionNonSelfAdjoint} equals
$$
C_{p,\beta} = {\beta\over p}\log\left({\sqrt{\pi}p\Gamma({p\over 2})\over 2^p\sqrt{e}\Gamma({p+1\over 2})}\right).
$$
This completes the argument leading to \ref{thm:MainNonSelfAdjoint}. Finally, the proof of the law of large numbers in Corollary \ref{cor:SLLNSchatten} is literally the same as that of Corollary \ref{cor:LLN}.

\subsection*{Acknowledgement}

JP has been supported by a \textit{Visiting International Professor Fellowship} from the Ruhr University Bochum and its Research School PLUS. ZK and CT were supported by the DFG Scientific Network \textit{Cumulants, Concentration and Superconcentration}.

\bibliographystyle{plain}
\bibliography{sanov_LDP}

\end{document}